\theoremstyle{definition}
\newtheorem{definition}{Definition}[section]
\newtheorem{theorem}{Theorem}[section]
\newtheorem{proposition}{Proposition}[section]
\newtheorem{corollary}{Corollary}[section]
\newtheorem{lemma}{Lemma}[section]
\newtheorem{example}{Example}[section]
\title{Homological dimensions of analytic Ore extensions}
\author{Petr Kosenko}
\newcommand{\Addresses}{{
		\bigskip
		\footnotesize
		
		P.~Kosenko, \textsc{Department of Mathematics, Higher School of Economics, Moscow, Usacheva str. 6}\par\nopagebreak
		\textit{E-mail address}, P.~Kosenko: \texttt{prkosenko@edu.hse.ru}
}}
\begin{document}
	\maketitle
\begin{abstract}
	If $A$ is an algebra with finite right global dimension, then for any automorphism $\alpha$ and $\alpha{\text{-derivation }} \delta$ the right global dimension of $A[t; \alpha, \delta]$ satisfies
	\[
	\text{rgld} \, A \le \text{rgld} \, A[t; \alpha, \delta] \le \text{rgld} \, A + 1.
	\]
	We extend this result to the case of holomorphic Ore extensions and smooth crossed products by $\mathbb{Z}$ of $\hat{\otimes}$-algebras.
\end{abstract}

\section{Introduction}
\label{introduction}
We will start this paper by recalling the following well-known theorem:

\begin{theorem}[\cite{book:71679}, Theorem 4.3.7]
	\label{classic}
	If $R$ is a ring then the following estimate takes place for every $n \in \mathbb{N}$:
	\[
	\text{rgld} \, R[x_1, \dots x_n] = n + \text{rgld} \, R.
	\]
\end{theorem}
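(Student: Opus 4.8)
The plan is to prove the one–variable case $\operatorname{rgld} R[x]=\operatorname{rgld} R+1$ and then bootstrap to $n$ variables. Since $R[x_1,\dots,x_n]\cong R[x_1,\dots,x_{n-1}][x_n]$, an immediate induction on $n$ adds $1$ to the right global dimension at each step and yields the stated formula, with the convention $\infty+1=\infty$ taking care of the case $\operatorname{rgld} R=\infty$. Thus everything reduces to the two inequalities comprising $\operatorname{rgld} R[x]=\operatorname{rgld} R+1$. Throughout I write $T=R[x]$, which is a free module over $R$, and I use $\operatorname{rgld} S=\sup_M\operatorname{pd}_S M$.

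For the upper bound $\operatorname{rgld} T\le\operatorname{rgld} R+1$ I would use the fundamental change-of-rings short exact sequence. For any right $T$-module $M$, writing $M_R$ for its restriction to $R$, there is an exact sequence of right $T$-modules
\[
0\longrightarrow M_R\otimes_R T\xrightarrow{\ \theta\ } M_R\otimes_R T\xrightarrow{\ \mu\ } M\longrightarrow 0,
\]
with $\mu(m\otimes f)=mf$ and $\theta(m\otimes f)=mx\otimes f-m\otimes xf$, which is well defined and $T$-linear precisely because $x$ is central. Each outer term is induced from $R$, so it has $T$-projective dimension at most $\operatorname{pd}_R M_R\le\operatorname{rgld} R$ (apply $-\otimes_R T$ to an $R$-projective resolution, using that $T$ is $R$-free, so that induction preserves projectivity). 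Dimension shifting along the sequence then gives $\operatorname{pd}_T M\le\operatorname{rgld} R+1$, and taking the supremum over $M$ proves the bound.

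For the lower bound I must exhibit a $T$-module whose projective dimension realizes $\operatorname{rgld} R+1$. Assume first $n:=\operatorname{rgld} R<\infty$; since the supremum is then attained, choose a right $R$-module $M$ with $\operatorname{pd}_R M=n$, and regard $M$ as a right $T$-module through the augmentation $x\mapsto 0$, i.e.\ as a module over $T/xT\cong R$. Now invoke the first change-of-rings theorem for the central nonzerodivisor $x$: for a $T/xT$-module of finite projective dimension over $T/xT$ one has $\operatorname{pd}_T M=\operatorname{pd}_{T/xT}M+1=n+1$. Hence $\operatorname{rgld} T\ge n+1$, which together with the upper bound gives equality. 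If instead $\operatorname{rgld} R=\infty$, the same increment applied to $R$-modules of arbitrarily large finite projective dimension forces $\operatorname{rgld} T=\infty$, again matching the formula.

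The main obstacle is the hard half of the first change-of-rings theorem: that projective dimension goes \emph{up} by exactly one and does not drop when an $R$-module is viewed as a $T$-module killed by $x$. I would establish this from the bimodule resolution $0\to T\xrightarrow{x}T\to R\to 0$, which shows $\operatorname{pd}_T R=1$ and, more importantly, lets one compare syzygies: reducing a finite $T$-projective resolution of $M$ modulo the regular element $x$ and using that $x$ is a nonzerodivisor shows the top $T$-syzygy of $M$ cannot become projective one step too early. Equivalently, the regular element $x$ produces a degree shift $\operatorname{Tor}^{T}_{\ast+1}(M,-)\cong\operatorname{Tor}^{R}_{\ast}(M,-)$ on these modules, so a nonvanishing $\operatorname{Tor}^{R}_{n}(M,-)$ yields a nonvanishing $\operatorname{Tor}^{T}_{n+1}(M,-)$. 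This regularity input is exactly what makes the increment exact; by contrast the upper bound, needing only freeness of $T$ over $R$, is routine.
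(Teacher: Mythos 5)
Your overall architecture is sound and is in fact the same as that of the source the paper cites for this statement (the paper itself gives no proof of Theorem \ref{classic}, quoting it from \cite{book:71679}): reduce to one variable via $R[x_1,\dots,x_n]\cong R[x_1,\dots,x_{n-1}][x_n]$; get the upper bound from the exact sequence $0\to M_R\otimes_R T\xrightarrow{\theta} M_R\otimes_R T\to M\to 0$ together with the fact that $-\otimes_R T$ carries $R$-projective resolutions to $T$-projective ones; get the lower bound by viewing an $R$-module of top projective dimension as a module over $T/xT\cong R$ and applying the First Change of Rings Theorem; and handle $\operatorname{rgld}R=\infty$ by unbounded finite dimensions. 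Up to that point everything is correct, including the attainment of the supremum when it is finite.

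The genuine gap is in your closing sketch of the hard half of the First Change of Rings Theorem, and it is twofold. First, the asserted shift $\operatorname{Tor}^T_{*+1}(M,-)\cong\operatorname{Tor}^R_*(M,-)$ is false as stated: since $\operatorname{Tor}^T_q(R,M)\cong M$ for $q=0,1$ (from $0\to T\xrightarrow{x}T\to R\to 0$ and $xM=0$), one only gets a two-row change-of-rings spectral sequence $\operatorname{Tor}^R_p(M,N)\Rightarrow\operatorname{Tor}^T_{p+q}(M,N)$; e.g.\ for $R=k[y]/(y^2)$ and $M=N=k$ one has $\operatorname{Tor}^T_{m+1}(k,k)\cong k^2$ but $\operatorname{Tor}^R_m(k,k)\cong k$. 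Only the top-degree nonvanishing survives the spectral sequence. Second, and more seriously, $\operatorname{Tor}$ controls \emph{flat} dimension, not projective dimension: $\operatorname{pd}_R M=n$ does not imply $\operatorname{Tor}^R_n(M,-)\neq 0$ over a general ring ($\mathbb{Q}$ over $\mathbb{Z}$ is flat of projective dimension $1$), so even the corrected Tor statement cannot certify $\operatorname{pd}_T M=n+1$. To close the argument use $\operatorname{Ext}$ instead: by Rees's theorem, if $x$ is central and regular on $T$ and on $N$, and $xM=0$, then $\operatorname{Ext}^{n+1}_T(M,N)\cong\operatorname{Ext}^n_{T/xT}(M,N/xN)$; choosing an $R$-module $N_0$ with $\operatorname{Ext}^n_R(M,N_0)\neq 0$ and taking $N=N_0\otimes_R T$ (on which $x$ is regular, with $N/xN\cong N_0$) yields $\operatorname{Ext}^{n+1}_T(M,N)\neq 0$, hence $\operatorname{pd}_T M\ge n+1$. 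Alternatively run Kaplansky's induction on $n=\operatorname{pd}_R M$ (the base $n=0$ uses that a nonzero projective $T$-module has no $x$-torsion; the case $n=1$ requires its own separate argument). Of course, since the First Change of Rings Theorem is standard, you may simply cite it, and then your proof is complete; but the justification as written does not prove it.
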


The importance of this theorem lies in the fact that it immediately yields the Hilbert's syzygy theorem in the case when $R$ is a field (see [\cite{book:71679}, Corollary 4.3.8]).

This fact can be, indeed, generalized to Ore extensions $R[t; \alpha, \delta]$, as shown in \cite{NonComm2001}. It turns out that if the global dimension of $R$ is finite, then the global dimension of $R[t; \alpha, \delta]$ either stays the same, or increases by one.

In this paper we adapt the arguments used in \cite[ch. 7.5]{NonComm2001} to the topological setting in order to obtain the estimates for the right homological dimensions of holomorphic(topological) Ore extensions (see \cite[ch. 4.1]{PirkAM}) and smooth crossed products by $\mathbb{Z}$ (see \cite{schweitzer1993dense} and \cite{phillips1994representable}). 


Below we state the result in the purely algebraic situation, which is provided in \cite{NonComm2001} and then we present its topological version.\\
\textbf{Remark.} There is an ambiguity in defining Ore extensions, which will be demonstrated below, so, to state the result in the algebraic setting, we need to fix an appropriate definition of Ore extensions:

\begin{definition}
\label{ore1}
Let $A$ be an algebra, $\alpha \in \text{End}(A)$ and let $\delta : A \rightarrow A$ be a $\mathbb{C}$-linear map, such that the following relation holds for every $a, b \in A$:
	$$
	\delta(ab) = \delta(a)b + \alpha(a)\delta(b).
	$$
Let us call such maps $\alpha$-derivations.

Then the \textit{Ore extension} of $A$ w.r.t $\alpha$ and $\delta$ is the vector space 
\[
A[t; \alpha, \delta] =\left\lbrace  \sum_{i=0}^n a_i t^i : a_i \in A \right\rbrace 
\]
with the multiplication defined uniquely by the following conditions:
\begin{enumerate}[label=(\arabic*)]
	\item The relation $ta = \alpha(a) t + \delta(a)$ holds for any $a \in A$.
	\item The natural inclusions $A \hookrightarrow A[t; \alpha, \delta]$ and $\mathbb{C}[t] \hookrightarrow A[t; \alpha, \delta]$ are algebra homomorphisms.
\end{enumerate}
Also, if $\delta = 0$ and $\alpha$ is invertible, then one can define the \textit{Laurent Ore extension} of $A$ 
$$
A[t, t^{-1}; \alpha] = \left\lbrace \sum_{i = -n}^n a_i t^i : a_i \in A \right\rbrace 
$$
 
with the multiplication defined the same way.
\end{definition}

The thing is, the authors of \cite{NonComm2001} denote slightly different type of algebras by $A[t; \alpha, \delta]$:
\begin{definition}[\cite{NonComm2001}, pp. 1.2.1-1.2.6]
	\label{ore2}
Let $A$ be an algebra, $\tilde{\alpha} \in \text{End}(A)$ and let $\tilde{\delta} : A \rightarrow A$ be a $\mathbb{C}$-linear map, such that the following relation holds for every $a, b \in A$:
	$$
	\tilde{\delta}(ab) = \tilde{\delta}(a) \tilde{\alpha}(b) + a \tilde{\delta}(b).
	$$
Let us call such maps opposite $\alpha$-derivations.
Then the \textit{(opposite) Ore extension} of $A$ w.r.t $\tilde{\alpha}$ and $\tilde{\delta}$ is the vector space 
\[
A_{\text{op}}[t; \tilde{\alpha}, \tilde{\delta}] = \left\lbrace  \sum_{i=0}^n t^i a_i : a_i \in A \right\rbrace 
\] 
with the multiplication defined uniquely by the following conditions:
\begin{enumerate}[label=(\arabic*)]
	\item The relation $at = t \tilde{\alpha}(a) + \tilde{\delta}(a)$ holds for any $a \in A$.
	\item The natural inclusions $A \hookrightarrow A_{\text{op}}[t; \tilde{\alpha}, \tilde{\delta}]$ and $\mathbb{C}[t] \hookrightarrow A_{\text{op}}[t; \tilde{\alpha}, \tilde{\delta}]$ are algebra homomorphisms.
\end{enumerate}
Also, if $\tilde{\delta} = 0$ and $\tilde{\alpha}$ is invertible, then one can define the \textit{Laurent Ore extension} of $A$
\[
A_{\text{op}}[t, t^{-1}; \tilde{\alpha}] =\left\lbrace  \sum_{i = -n}^n t^i a_i : a_i \in A \right\rbrace 
\]
with the multiplication defined the same way.
\end{definition}

It is easily seen that in the case of invertible $\alpha$, the following algebra isomorphisms take place:
\begin{equation}
	\label{Ore isomorphisms}
	A[x; \alpha, \delta] \cong A_{\text{op}}[x, \alpha^{-1}, -\delta \alpha^{-1}], \quad A[x, x^{-1}; \alpha ] \cong A_{\text{op}}[x, x^{-1}; \alpha^{-1}].
\end{equation}

Throughout the paper, we will work with Ore extensions in the sense of Definition \ref{ore1} (if not stated otherwise). 

Now we are ready to state the result in the purely algebraic case, which is, essentially, contained in the \cite[Theorem 5.7.3]{NonComm2001}:

\begin{theorem}[\cite{NonComm2001}, Theorem 5.7.3]
	Let $A$ be an algebra, let $\sigma$ be an automorphism and let $\delta$ be a $\sigma$-derivation, in the sense of a Definition \ref{ore2}. Denote the right global dimension of a ring $R$ by $\text{dgr}(R)$. Then the following estimates hold:
	\begin{enumerate}[label=(\arabic*)]
		\item $\text{dgr} \, A \le \text{dgr} \, A_{\text{op}}[t; \sigma, \delta] \le \text{dgr} \, A + 1 \text{ if } \text{dgr} \, R < \infty$
		\item $\text{dgr} \, A \le\text{dgr} \, A_{\text{op}}[t, t^{-1}; \sigma] \le \text{dgr} \,A + 1$
		\item $\text{dgr} \, A_{\text{op}}[t, \sigma] = \text{dgr} \, A + 1$
		\item $\text{dgr} \, A[t, t^{-1}] = \text{dgr} \, A + 1$
	\end{enumerate}
\end{theorem}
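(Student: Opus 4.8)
The plan is to reduce all four statements to estimates on projective dimensions of individual modules and to control those via two devices: the \emph{fundamental short exact sequence} attached to an Ore extension, and the standard change-of-rings theorems. Throughout I would write $S = A_{\text{op}}[t;\sigma,\delta]$ and use that, by the normal form in Definition~\ref{ore2}, $S$ is a free right (and left) $A$-module on the basis $\{t^i\}_{i\ge 0}$ (resp. $\{t^i\}_{i\in\mathbb{Z}}$ in the Laurent case). Freeness is what makes induction $-\otimes_A S$ exact and carry projective $A$-modules to projective $S$-modules, and it makes the restriction of a projective $S$-module projective over $A$; these are the two facts I would invoke repeatedly.

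For the upper bounds (the right-hand inequalities in (1)--(4)) the key step is to produce, for every right $S$-module $M$, a short exact sequence of right $S$-modules
\[
0 \longrightarrow M\otimes_A S \xrightarrow{\ \beta\ } M\otimes_A S \xrightarrow{\ \mu\ } M \longrightarrow 0,
\]
where $\mu(m\otimes s)=ms$ is the action map and $\beta$ is the right $S$-linear extension of a $\sigma$-semilinear map on $M$ encoding the commutation rule $at=t\sigma(a)+\delta(a)$, so that up to a harmless $\sigma$-twist (which does not affect projective dimension) the left-hand term is again an induced module. Granting this, the change-of-rings inequality $\mathrm{pd}_S(M\otimes_A S)\le \mathrm{pd}_A(M|_A)$, coming from exactness of free induction, together with the sequence gives $\mathrm{pd}_S M \le \mathrm{pd}_A(M|_A)+1 \le \mathrm{dgr}\,A + 1$; taking the supremum over $M$ yields $\mathrm{dgr}\,S\le \mathrm{dgr}\,A+1$. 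The same argument applies verbatim to the Laurent extensions using the $\mathbb{Z}$-indexed basis, so this one computation covers the upper estimate in all four parts.

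For the lower bounds I would separate two claims. First, $\mathrm{dgr}\,A\le \mathrm{dgr}\,S$ in every case: since $S$ is projective as a one-sided $A$-module, restricting a projective $S$-resolution gives $\mathrm{pd}_A(N|_A)\le \mathrm{pd}_S N$; applying this to $N=M\otimes_A S$ and recovering $M$ inside $(M\otimes_A S)|_A$ (as the bottom term of the $t$-degree filtration, whose associated graded is a direct sum of $\sigma$-twists of $M$, each of the same projective dimension) yields $\mathrm{pd}_A M\le \mathrm{pd}_S(M\otimes_A S)$, hence $\mathrm{dgr}\,A\le \mathrm{dgr}\,S$. Second, the sharp $+1$ in (3) and (4): here $\delta=0$, so $t$ is a \emph{normal} nonzerodivisor with $S/tS\cong A$ in case (3), while $t-1$ is a \emph{central} nonzerodivisor with $A[t,t^{-1}]/(t-1)\cong A$ in case (4). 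The change-of-rings theorem for a normal nonzerodivisor then gives $\mathrm{pd}_S M = \mathrm{pd}_A M + 1$ for every $A$-module $M$ of finite projective dimension, whence $\mathrm{dgr}\,S\ge \mathrm{dgr}\,A+1$; combined with the upper bound this forces equality. In the Laurent case (2) the element $t$ is a unit, so no such quotient exists and only the two-sided estimate survives.

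The step I expect to be the main obstacle is twofold. Establishing the fundamental sequence — in particular injectivity of $\beta$ and the equality of its image with $\ker\mu$ — rests on a leading-term analysis in the $t$-degree and genuinely uses that $\sigma$ is an automorphism, so that multiplication by $t$ is injective on top-degree coefficients and the twisting map is invertible; getting the $\sigma$-twist and the sides exactly right is the delicate bookkeeping here. The second subtle point is the lower bound $\mathrm{dgr}\,A\le\mathrm{dgr}\,S$ when $\delta\ne 0$: then $t\mapsto 0$ is not an algebra map and $A$ is not a bimodule retract of $S$, so the clean grading argument degenerates into a filtration argument and one must verify that projective dimension is detected at the bottom filtration term rather than lost to the $\delta$-induced lower-order corrections. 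I would handle this by comparing $S$ with its associated graded ring $A_{\text{op}}[t;\sigma]$ and transporting the estimate through the filtration.
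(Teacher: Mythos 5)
Your upper-bound mechanism is exactly the one the paper adapts: your ``fundamental short exact sequence'' is what you get by applying $M\otimes_A(-)$ to the sequence $0 \to S_\sigma\otimes_A S \to S\otimes_A S \to S \to 0$ (with $S = A_{\text{op}}[t;\sigma,\delta]$), which the paper obtains from Proposition \ref{reldifex} together with the identification $\Omega^1_A(S)\cong S_\sigma\otimes_A S$ of Proposition \ref{algebraic Ore extension}, and your leading-term analysis for the injectivity of $\beta$ is the same bookkeeping that goes into that identification. Your treatment of the sharp cases (3) and (4) via the regular normal elements $t$ and $t-1$ and the change-of-rings theorem for a normal nonzerodivisor is likewise the standard route in \cite{NonComm2001}. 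So far the proposal is sound and essentially parallel to the source.

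The genuine gap is the lower bound $\text{dgr}\,A \le \text{dgr}\,S$ when $\delta \ne 0$, and you have correctly located but not correctly resolved it. The claim that projective dimension is ``detected at the bottom filtration term'' is false in general: a submodule can have strictly \emph{larger} projective dimension than the ambient module (any non-projective ideal in a ring of global dimension $\ge 2$), and an exhaustive filtration of $(M\otimes_A S)|_A$ with graded pieces $M_{\sigma^i}$ only yields the \emph{upper} bound $\text{dh}_A(M\otimes_A S)\le \text{dh}_A(M)+1$, never the reverse inequality; passing to the associated graded ring $A_{\text{op}}[t;\sigma]$ controls $S$-modules, not the $A$-dimension of restrictions, so it does not repair the step. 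The argument that works --- the one the paper proves in topological form as Proposition \ref{conditions for lower estimates} --- dispenses with filtrations and instead uses maximality: choose $M$ with $\text{dh}_A(M) = \text{dgr}\,A = n$ (this is exactly where the finiteness hypothesis in (1) enters; for $\text{dgr}\,A = \infty$ one takes modules of arbitrarily large finite dimension). In the short exact sequence of right $A$-modules
\[
0 \longrightarrow M \longrightarrow M\otimes_A S \longrightarrow N \longrightarrow 0
\]
one has $\text{dh}_A(N)\le n$, so the subadditivity estimates of Theorem \ref{subadditivity} leave no room for the exceptional case and force $\text{dh}_A(M\otimes_A S) = n$; then, since $S$ is free as a right $A$-module, restriction of scalars gives $n = \text{dh}_A\bigl((M\otimes_A S)|_A\bigr) \le \text{dh}_S(M\otimes_A S) \le \text{dgr}\,S$, as in Lemma \ref{trans}. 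Note that maximality of $\text{dh}_A(M)$ is doing the work your filtration claim was meant to do: for an arbitrary $M$ the inequality $\text{dh}_A(M)\le \text{dh}_S(M\otimes_A S)$ is not justified by anything in your proposal.
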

\noindent
\textbf{Remark.} In fact, the above theorem still holds if we replace $A_{\text{op}}[t; \sigma, \delta]$ with $A[t; \sigma, \delta]$ due to \eqref{Ore isomorphisms}.




This paper is organized as follows: in the Section \ref{homological dimensions} we recall the important notions related to homological properties of topological modules, in particular, we provide definitions of homological dimensions for topological algebras and modules. In the Section \ref{Estimates for the bidimension and projective global dimensions of holomorphic Ore extensions} we compute the estimates for the homological dimensions of holomorphic Ore extensions; we use the bimodules of relative differentials to construct the required projective resolutions. In the Section \ref{Homological dimensions of smooth crossed products} we compute the estimates for the smooth crossed products by $\mathbb{Z}$.

In the Appendix \ref{appendix A} we provide the computations of algebraic and topological bimodules of relative differentials for different types of Ore extensions.

\section*{Acknowledgments}
I want to thank Alexei Pirkovskii for his assistance in writing this paper, Alexei's comments and ideas were very helpful and greatly improved this paper.
\section{Homological dimensions}
\label{homological dimensions}
\textbf{Remark.} All algebras in this paper are considered to be complex, unital and associative. Also, we will be working only with unital modules.

\subsection{Notation}
Let us introduce some notation (see \cite{Helem1986} and \cite{PirkVdB} for more details).

For locally convex Hausdorff spaces $E, F$ we denote the completed projective tensor product of locally convex spaces $E, F$ by $E \hat{\otimes} F$.

\begin{definition}
	A \textit{Fr\'echet} space is a complete metrizable locally convex space.
\end{definition}
In other words, a locally convex space $X$ is a Fr\'echet space if and only if the topology on $X$ can be generated by a countable family of seminorms.

Denote by \textbf{LCS}, \textbf{Fr} the categories of complete locally convex spaces and Fr\'echet spaces, respectively. Also we will denote the category of vector spaces by $\textbf{Lin}$.

For a detailed introduction to the theory of locally convex spaces and algebras, and relevant examples, the reader can see \cite{book:29603}, \cite{book:955359}, \cite{book:338950}, or \cite{Helem2006}.

%
\begin{definition}
	Let $A$ be a locally convex space with a multiplication $\mu : A \times A \rightarrow A$, such that $(A, \mu)$ is an algebra.
	\begin{enumerate}[label=(\arabic*)]
		\item If $\mu$ is separately continuous, then $A$ is called \textit{a locally convex algebra}. 
		\item If $A$ is a complete locally convex space, and $\mu$ is jointly continuous, then $A$ is called a $\hat{\otimes}$-\textit{algebra}.
	\end{enumerate}
	If $A, B$ are $\hat{\otimes}$-algebras and $\eta : A \rightarrow B$ is a continuous unital algebra homomorphism, then the pair $(B, \eta)$ is called a $A$-$\hat{\otimes}$\textit{-algebra}.
	
	A $\hat{\otimes}$-algebra with the underlying locally compact space which is a Fr\'echet space is called a \textit{Fr\'echet algebra}.
\end{definition}

\begin{definition}
	A locally convex algebra $A$ is called $m$-convex if the topology on it can be defined by a family of submultiplicative seminorms.
\end{definition}

\begin{definition}
	A complete locally $m$-convex algebra is called an \textit{Arens-Michael algebra}.
\end{definition}

\begin{definition}
	Let $A$ be a $\hat{\otimes}$-algebra and let $M$ be a complete locally convex space with a structure of a topological $A$-module w.r.t. the locally convex topology on $M$. Also suppose that the natural map $A \times M \rightarrow M$ is jointly continuous. Then we will call $M$ a left $A$-$\hat{\otimes}$\textit{-module}. In a similar fashion we define right $A$-$\hat{\otimes}$\textit{-modules} and $A$-$B$-$\hat{\otimes}$\textit{-bimodules}.
	
	A $\hat{\otimes}$-module over a Fr\'echet algebra which is itself a Fr\'echet space is called a Fr\'echet $A$-$\hat{\otimes}$-module.
\end{definition}

For $\hat{\otimes}$-algebras $A, B$ we denote
\[
\begin{aligned}
A\text{-}\textbf{mod} &= \text{the category of (unital) left } A\text{-}\hat{\otimes}\text{-modules}, \\
\textbf{mod}\text{-}A &= \text{the category of (unital) right } A\text{-}\hat{\otimes}\text{-modules}, \\
A\text{-}\textbf{mod}\text{-}B &=  \text{the category of (unital) } A\text{-}B\text{-}\hat{\otimes}\text{-bimodules}. \\
\end{aligned}
\]

More generally, for a fixed category $\mathcal{C} \subseteq \textbf{LCS}$ we denote
\[
\begin{aligned}
A\text{-}\textbf{mod}(\mathcal{C}) &= \text{the category of left (unital) } A\text{-}\hat{\otimes}\text{-modules whose underyling LCS belong to } \mathcal{C}, \\
\textbf{mod}\text{-}A(\mathcal{C})&= \text{the category of right (unital) } A\text{-}\hat{\otimes}\text{-modules whose underyling LCS belong to } \mathcal{C}, \\
A\text{-}\textbf{mod}\text{-}B(\mathcal{C}) &=  \text{the category of (unital) } A\text{-}B\text{-}\hat{\otimes}\text{-bimodules whose underyling LCS belong to } \mathcal{C}. \\
\end{aligned}
\]
Complexes of $A$-$\hat{\otimes}$-modules for a $\hat{\otimes}$-algebra $A$
$$
\dots \xrightarrow{d_{n+1}} M_{n+1} \xrightarrow{d_n} M_n \xrightarrow{d_{n-1}} M_{n-1} \xrightarrow{d_{n-2}} \dots
$$
are usually denoted by $\{M, d\}$.

\begin{definition}
	Let $A$ be a $\hat{\otimes}$-algebra and consider a left $A$-$\hat{\otimes}$-module $Y$ and a right $A$-$\hat{\otimes}$-module $X$.
	\begin{enumerate}[label=(\arabic*)]
		\item A bilinear map $f : X \times Y \longrightarrow Z$, where $Z \in \textbf{LCS}$, is called $A$-balanced if $f(x \circ a, y) = f(x, a \circ y)$ for every $x \in X, y \in Y, a \in A$.
		\item A pair $(X \hat{\otimes}_A Y, i)$, where $X \hat{\otimes}_A Y \in \textbf{LCS}$, and $i : X \hat{\otimes}_A Y \longrightarrow X \hat{\otimes}_A Y$ is a continuous $A$-balanced map, is called the \textit{completed projective tensor product of} $X$ and $Y$, if for every $Z \in \textbf{LCS}$ and  continuous $A$-balanced map $f : X \times Y \longrightarrow Z$ there exists a unique continuous linear map $\tilde{f} : X \hat{\otimes}_A Y \longrightarrow Z$ such that $f = \tilde{f} \circ i$.
	\end{enumerate}
\end{definition}

For the proof of the existence and uniqueness of complete projective tensor products of $\hat{\otimes}$-modules the reader can see \cite[ch. 2.3-2.4]{Helem1986}. In this paper we would like to keep in mind a trivial, but nonetheless useful example:
\begin{example}
	Let $A$ be a $\hat{\otimes}$-algebra, and consider a left $A$-$\hat{\otimes}$-module $M$. Then
	\begin{equation}
	\label{a simple tensor product}
		A \hat{\otimes}_A M \xrightarrow{\sim} M, \quad a \otimes m \mapsto a \cdot m,
	\end{equation}
	is a topological isomorphism of left $A$-$\hat{\otimes}$-modules. Similar isomorphisms can be constructed for right $A$-$\hat{\otimes}$-modules.
\end{example}
\subsection{Projectivity and flatness}

The following definitions shall be given in the case of left modules; the definitions in the cases of right modules and bimodules are similar, just use the following category isomorphisms:
\[
\textbf{mod}\text{-}A \simeq A^{\text{op}}\text{-}\textbf{mod};~ A\text{-}\textbf{mod}\text{-}B \simeq (A \hat{\otimes} B^{\text{op}}) \text{-}\textbf{mod}.
\]

Let us fix a $\hat{\otimes}$-algebra $A$.

\begin{definition}
A complex of $A$-$\hat{\otimes}$-modules $\{M, d\}$ is called \textit{admissible} $\iff$ it splits in the category $\textbf{LCS}$. A morphism of $A$-$\hat{\otimes}$-modules $f : X \rightarrow Y$ is called \textit{admissible} if it is one of the morphisms in an admissible complex.
\end{definition}

\begin{definition}
	An additive functor $F : A\text{-}\textbf{mod} \rightarrow \textbf{Lin}$ is called  \textit{exact} $\iff$ for every admissible complex $\{M, d\}$ the corresponding complex $\{F(M), F(d)\}$ in $\textbf{Lin}$ is exact.
\end{definition}

\begin{definition}
	\indent
	\begin{enumerate}[label=(\arabic*)]
		\item A module $P \in A\text{-}\textbf{mod}$ is called \textit{projective} $\iff$ the functor $\text{Hom}_A(P, -)$ is exact.
		\item A module $Y \in A\text{-}\textbf{mod}$ is called \textit{flat} $\iff$ the functor $(-)\hat{\otimes}_A Y : \textbf{mod}\text{-}A \rightarrow \textbf{Lin}$ is exact.
		\item A module $X \in A\text{-}\textbf{mod}$ is called \textit{free} $\iff X$ is isomorphic to $A \hat{\otimes} E$ for some $E \in \mathcal{C}$.
	\end{enumerate}
\end{definition}
Due to the \cite[Theorem 3.1.27]{Helem1986}, any free module is projective, now we are going to prove that a free module is flat.
\begin{proposition}
	\label{free admissible}
	Let $X$ be a free left $A$-$\hat{\otimes}$-module. Then for every admissible sequence $\{M, d\}$ of right $A$-$\hat{\otimes}$-modules the complex $\{M \hat{\otimes}_A X, d \hat{\otimes} \text{Id}_X\}$, where $(M \hat{\otimes}_A X)_i = M_i \hat{\otimes}_A X$ splits in \textbf{LCS}.
\end{proposition}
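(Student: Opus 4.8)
The plan is to reduce the twisted tensor product $M \hat{\otimes}_A X$ to an ordinary completed projective tensor product over $\mathbb{C}$, in which a splitting is manifestly preserved, and then transport the contracting homotopy of $\{M, d\}$ along this reduction. Since $X$ is free, by definition there is a locally convex space $E \in \mathcal{C}$ together with a topological isomorphism of left $A$-$\hat{\otimes}$-modules $X \cong A \hat{\otimes} E$, where $A$ acts via left multiplication on the first factor. The whole argument hinges on identifying $M_i \hat{\otimes}_A X$ with the untwisted product $M_i \hat{\otimes} E$.

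First I would establish, for each fixed right $A$-$\hat{\otimes}$-module $N$, a natural chain of topological isomorphisms
\[
N \hat{\otimes}_A (A \hat{\otimes} E) \cong (N \hat{\otimes}_A A) \hat{\otimes} E \cong N \hat{\otimes} E,
\]
where the first isomorphism is the associativity of the completed projective tensor product of $\hat{\otimes}$-modules (see \cite[ch. 2.3--2.4]{Helem1986}) and the second is the simple isomorphism \eqref{a simple tensor product} in its right-module form. The crucial point is that this composite is natural in $N$: any morphism $f : N \to N'$ of right $A$-$\hat{\otimes}$-modules induces a commuting square, since the associativity constraint is a natural transformation and $(-) \hat{\otimes} E$ is a functor on \textbf{LCS}. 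Applying naturality with $N = M_i$ and the structure maps $f = d_i$, I obtain an isomorphism of complexes
\[
\{M \hat{\otimes}_A X, d \hat{\otimes} \text{Id}_X\} \cong \{M \hat{\otimes} E, d \hat{\otimes} \text{Id}_E\},
\]
so it suffices to prove that the right-hand complex splits in \textbf{LCS}.

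To finish, I would use that $\{M, d\}$ is admissible, i.e. splits in \textbf{LCS}: there exist continuous linear maps $s_i : M_i \to M_{i+1}$ forming a contracting homotopy, $d_i s_i + s_{i-1} d_{i-1} = \text{Id}_{M_i}$ (with the convention $d_i : M_{i+1} \to M_i$ from the complex above). Tensoring each $s_i$ with $\text{Id}_E$ and using that $\hat{\otimes}$ is an additive bifunctor, so that $(g \hat{\otimes} \text{Id}_E)(h \hat{\otimes} \text{Id}_E) = (gh) \hat{\otimes} \text{Id}_E$, the homotopy identity is preserved termwise:
\[
(d_i \hat{\otimes} \text{Id}_E)(s_i \hat{\otimes} \text{Id}_E) + (s_{i-1} \hat{\otimes} \text{Id}_E)(d_{i-1} \hat{\otimes} \text{Id}_E) = \text{Id}_{M_i \hat{\otimes} E}.
\]
Hence $\{s_i \hat{\otimes} \text{Id}_E\}$ is a contracting homotopy of continuous linear maps, and the complex splits in \textbf{LCS}.

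The main obstacle is the associativity isomorphism and, above all, its naturality in $N$: one must check that $N \hat{\otimes}_A (A \hat{\otimes} E) \cong N \hat{\otimes} E$ holds as an isomorphism in \textbf{LCS} (not merely algebraically) and that it is compatible with the maps induced by $d$. This rests on the existence, uniqueness, and universal property of completed projective tensor products of $\hat{\otimes}$-modules and on the compatibility of the associativity constraint with module morphisms; I would invoke the relevant results of \cite[ch. 2.3--2.4]{Helem1986} rather than reconstruct them. Everything else is formal once the passage from the twisted product over $A$ to the untwisted product over $\mathbb{C}$ has been justified.
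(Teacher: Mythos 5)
Your proposal is correct and follows essentially the same route as the paper: both arguments untwist the tensor product via the freeness isomorphism $M_i \hat{\otimes}_A X \cong M_i \hat{\otimes}_A (A \hat{\otimes} E) \cong M_i \hat{\otimes} E$ and then transport the splitting data along $(-) \hat{\otimes} \text{Id}_E$. The only (cosmetic) difference is that the paper reduces to short admissible sequences and tensors the retraction and coretraction, while you tensor the full contracting homotopy of the complex directly.
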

\begin{proof}
	Fix an isomorphism $X \cong A \hat{\otimes} E$.
	
	If suffices to prove this proposition for short admissible sequences. However, notice that the sequence 
	\[
	\begin{tikzcd}
		0 \ar[r] & M_1 \hat{\otimes}_A X \ar[r, "d \otimes \text{Id}_X"] &  M_2 \hat{\otimes}_A X \ar[r, "d \otimes \text{Id}_X"] &  M_3 \hat{\otimes}_A X \ar[r] & 0  
	\end{tikzcd}
	\]
	is isomorphic to the sequence
	\[
	\begin{tikzcd}
	0 \ar[r] & M_1 \hat{\otimes} E \ar[r, "d \otimes \text{Id}_E"] &  M_2 \hat{\otimes} E \ar[r, "d \otimes \text{Id}_E"] &  M_3 \hat{\otimes} E \ar[r] & 0,  
	\end{tikzcd}
	\]
	and if $f : M_1 \rightarrow M_2$ is a coretraction in \textbf{LCS}, then $f \otimes \text{Id} : M_1 \hat{\otimes} E \rightarrow M_2 \hat{\otimes} E$ is a coretraction, as well, a similar argument holds for the retraction $M_2 \rightarrow M_3$.

\end{proof}

\begin{definition}
	Let $A$ be an algebra and let $M$ be a right $A$-module (or a $A$-bimodule, resp.). For any endomorphism $\alpha : A \rightarrow A$ denote by $M_\alpha$ a right $A$-module (or an $A$-bimodule, resp.), which coincides with $M$ as an abelian group (left $A$-module, resp.), and whose structure of right $A$-module is defined by $m \circ a = m \alpha(a)$. In a similar fashion one defines ${}_\alpha M$ for left modules.
\end{definition}

The proof of the following lemma is very similar to the proof of the \cite[Proposition 4.1.5]{Helem1986}.

\begin{lemma}
	\label{lemma1}
	Let $R$ be a $\hat{\otimes}$-algebra and let $A$ be a $R$-$\hat{\otimes}$-algebra.
	\begin{enumerate}[label=(\arabic*)]
		\item For every (projective/flat) left module $X \in R\text{-}\textbf{mod}$ the left $A$-$\hat{\otimes}$-module $A \hat{\otimes}_R X \in A\text{-}\textbf{mod}$ is (projective/flat). Moreover, for every projective (flat) right module $X \in \textbf{mod}\text{-}R$ the right $A$-module $X \hat{\otimes}_R A \in \textbf{mod}\text{-}A$ is (projective/flat).
		\item For every projective bimodule $P \in R\text{-}\textbf{mod}\text{-}R$ and $\alpha \in \text{Aut}(R)$ the bimodule \newline ${A_\alpha \hat{\otimes}_R P \hat{\otimes}_R A \in A\text{-}\textbf{mod}\text{-}A}$ is projective.
	\end{enumerate} 
\end{lemma}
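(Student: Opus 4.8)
The plan is to reduce everything to the single fact, recorded after \cite[Theorem 3.1.27]{Helem1986}, that a $\hat{\otimes}$-module is projective if and only if it is a retract of a free one, together with the behaviour of the completed tensor product on free modules and on admissible complexes. The three ingredients I will use throughout are: (a) the absorption isomorphism $N \hat{\otimes}_R (R \hat{\otimes} E) \cong N \hat{\otimes} E$ for a right $R$-$\hat{\otimes}$-module $N$ and $E \in \textbf{LCS}$ (and its left-handed version), which is the relative analogue of \eqref{a simple tensor product} and is established in \cite[ch. 2.3--2.4]{Helem1986}; (b) the associativity isomorphism $M \hat{\otimes}_A (A \hat{\otimes}_R X) \cong M \hat{\otimes}_R X$, natural in the right $A$-$\hat{\otimes}$-module $M$, where on the right $M$ is regarded as a right $R$-$\hat{\otimes}$-module along $\eta$; and (c) the observation that restriction of scalars along $\eta : R \to A$ does not change the underlying locally convex space, so it sends admissible complexes of $A$-$\hat{\otimes}$-modules to admissible complexes of $R$-$\hat{\otimes}$-modules.

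For part (1), take a projective $X \in R\text{-}\textbf{mod}$ and write it as a retract of a free module $R \hat{\otimes} E$. The functor $A \hat{\otimes}_R (-)$ is additive, hence preserves retracts, and by (a) it carries $R \hat{\otimes} E$ to the free $A$-$\hat{\otimes}$-module $A \hat{\otimes} E$; therefore $A \hat{\otimes}_R X$ is a retract of a free $A$-$\hat{\otimes}$-module and so is projective. For flatness, let $X$ be flat and consider the functor $(-) \hat{\otimes}_A (A \hat{\otimes}_R X)$ on $\textbf{mod}\text{-}A$. By (b) it is naturally isomorphic to the composite of restriction of scalars with $(-) \hat{\otimes}_R X$; by (c) restriction sends admissible complexes to admissible ones, and $(-) \hat{\otimes}_R X$ is exact because $X$ is flat, so the composite is exact and $A \hat{\otimes}_R X$ is flat. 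The statements for $X \hat{\otimes}_R A$ follow by the symmetric argument, or formally by passing to the opposite algebras via $\textbf{mod}\text{-}A \simeq A^{\text{op}}\text{-}\textbf{mod}$.

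For part (2), I regard the functor $\Phi = A_\alpha \hat{\otimes}_R (-) \hat{\otimes}_R A$ as a base change on bimodules and evaluate it on a free $R$-bimodule $R \hat{\otimes} E \hat{\otimes} R$. Collapsing the two outer copies of $R$ by (a) --- first the left copy against the right $R$-action of $A_\alpha$, then the right copy against the left $R$-action of $A$ --- yields a topological isomorphism $\Phi(R \hat{\otimes} E \hat{\otimes} R) \cong A \hat{\otimes} E \hat{\otimes} A$ onto the free $A$-bimodule on $E$; the twist $\alpha$ only enters the identification in the first absorption and is absorbed there, since $A_\alpha$ and $A$ agree as left $A$-$\hat{\otimes}$-modules. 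As $\Phi$ is additive and any projective $P \in R\text{-}\textbf{mod}\text{-}R$ is a retract of such a free bimodule, $\Phi(P)$ is a retract of a free $A$-bimodule and hence projective. Equivalently, one identifies $\Phi(P)$ with $A^e \hat{\otimes}_{R^e} P'$ for the enveloping algebras $R^e = R \hat{\otimes} R^{\text{op}}$, $A^e = A \hat{\otimes} A^{\text{op}}$ and a suitable $\alpha$-twist $P'$ of $P$, which is projective because twisting the left structure by an automorphism is an autoequivalence of $R\text{-}\textbf{mod}\text{-}R$, and then applies part (1).

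The main obstacle is not the algebra but making sure that the isomorphisms (a) and (b) are genuine \emph{topological} isomorphisms of $\hat{\otimes}$-modules and that they are natural in the appropriate variable; this is exactly where the theory of completed projective tensor products of $\hat{\otimes}$-modules from \cite[ch. 2.3--2.4]{Helem1986} is needed, and it is what forces the whole argument to live in the admissible (LCS-split) world rather than in an abstractly exact one. The point that makes the topological proof go through as smoothly as the algebraic one is observation (c): because restriction of scalars leaves the underlying locally convex space untouched, admissibility is preserved automatically, so no extra continuity or splitting hypotheses are required when transporting exactness along $\eta$. Keeping track of the $\alpha$-twist in part (2) is the only genuinely new bookkeeping relative to \cite[Proposition 4.1.5]{Helem1986}, and it disappears precisely because the twist acts only on the right $R$-structure of $A_\alpha$.
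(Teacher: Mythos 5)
Your proof is correct and follows essentially the same route as the paper's: projectivity is handled via the retract-of-free characterization of \cite[Theorem 3.1.27]{Helem1986} together with the absorption isomorphism $A \hat{\otimes}_R (R \hat{\otimes} E) \cong A \hat{\otimes} E$ (and its bimodule version $A_\alpha \hat{\otimes}_R R \hat{\otimes} E \hat{\otimes} R \hat{\otimes}_R A \cong A \hat{\otimes} E \hat{\otimes} A$, using that $A_\alpha \cong A$ as left $A$-modules), while flatness is handled via the canonical isomorphism $M \hat{\otimes}_A (A \hat{\otimes}_R X) \cong M \hat{\otimes}_R X$. Your extra bookkeeping --- the explicit observation that restriction of scalars leaves the underlying LCS, hence admissibility, untouched, and the enveloping-algebra reformulation of part (2) --- merely makes explicit what the paper's shorter proof leaves implicit, rather than constituting a different argument.
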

\begin{proof}
	\indent
	\begin{enumerate}[label=(\arabic*)]
		\item The module $X \in R \text{-} \textbf{mod}$ is projective iff $P$ is a retract of a left free $R$-module (\cite[Theorem 3.1.27]{Helem1986}), in other words, there are $E \in \mathcal{C}$ and a retraction $\sigma : R \hat{\otimes} E \rightarrow X$. But then the map
		\[
		\text{Id}_A \otimes \sigma : A \hat{\otimes} E \cong A \hat{\otimes}_R R \hat{\otimes} E \rightarrow A \hat{\otimes}_R X
		\]
		is a retraction, as well. The statement of the lemma for flat modules is simpler, because for every $M \in A\text{-}\textbf{mod}$ we have the canonical isomorphism $M \hat{\otimes}_A A \hat{\otimes}_R X \cong M \hat{\otimes}_R X$.
		
		The proofs for the right modules are the same.
		\item First of all, notice that the following isomorphism of $A$-$R$-$\hat{\otimes}$-bimodules takes place: 
		\[
		A_\alpha \hat{\otimes}_R R \xrightarrow{\sim} A_\alpha.
		\] 
		In particular, $A_\alpha \cong A$ as a left $A$-$\hat{\otimes}$-module. Any projective bimodule is a retract of a free bimodule, in other words, there exist $E \in \mathcal{C}$ and a retraction $\sigma :   R \hat{\otimes} E \hat{\otimes} R \rightarrow P$. Notice that the map
		$$
		\text{Id}_{A_\alpha} \otimes \sigma \otimes \text{Id}_A : A \hat{\otimes} E \hat{\otimes} A \cong A_\alpha \hat{\otimes}_R R \hat{\otimes} E \hat{\otimes} R \hat{\otimes}_R A \rightarrow  A_\alpha \hat{\otimes}_R P \hat{\otimes}_R A
		$$
		is a retraction of $A$-$\hat{\otimes}$-bimodules, and $A \hat{\otimes} E \hat{\otimes} A$ is a free $A$-$\hat{\otimes}$-bimodule.
	\end{enumerate} 
\end{proof}

The following lemma proves that the \cite[Lemma 7.2.2]{NonComm2001} is true for topological modules and $\hat{\otimes}$-algebras.


\subsection{Homological dimensions}

\begin{definition}
	Let $X \in A\text{-}\textbf{mod}$. Suppose that $X$ can be included in a following admissible complex:
	$$
	0 \leftarrow X \xleftarrow{\varepsilon} P_0 \xleftarrow{d_0} P_1 \xleftarrow{d_1} \dots  \xleftarrow{d_{n-1}} P_n \leftarrow 0 \leftarrow 0 \leftarrow \dots,
	$$
	where every $P_i$ is a projective module. Then we will call such complex a \textit{projective resolution} of $X$ of \textit{length} $n$. By definition, the length of an unbounded resolution equals $\infty$. \textit{Flat resolutions} are defined similarly.
\end{definition}

This allows us to define the notion of a derived functor in the topological case, for example, see \cite[ch 3.3]{Helem1986}. In particular, $\text{Ext}_A^k(M, N)$ and $\text{Tor}^A_k(M, N)$ are defined similarly to the purely algebraic situation.

\begin{definition}
	Consider an arbitrary module $M \in A\text{-}\textbf{mod}(\mathcal{C})$ for some fixed category $\mathcal{C} \subseteq \textbf{LCS}$. Then due to \cite[Theorem 3.5.4]{Helem1986} following number is well-defined and we have the following chain of equalities:
	\[
		\begin{aligned}
			\text{dh}^{\mathcal{C}}_A(M) & := \min\{ n \in \mathbb{Z}_{\ge 0} : \text{Ext}^{n+1}_A (M, N) = 0 \text{ for every } N \in A\text{-}\textbf{mod}(\mathcal{C}) \} = \\ & = \{\text{the length of a shortest projective resolution of }M \text{ in } A\text{-}\textbf{mod}(\mathcal{C})  \} \in \{-\infty\} \cup [0, \infty].
		\end{aligned}
	\]
	As we can see, this number doesn't depend on the choice of the category $\mathcal{C}$:
	\[
	\begin{aligned}
	\text{dh}^{\mathcal{C}}_A(M) &= \min\{ n \in \mathbb{Z}_{\ge 0} : \text{Ext}^{n+1}_A (M, N) = 0 \text{ for every } N \in A\text{-}\textbf{mod}(\mathcal{C}) \} \le \\ & \le \min\{ n \in \mathbb{Z}_{\ge 0} : \text{Ext}^{n+1}_A (M, N) = 0 \text{ for every } N \in A\text{-}\textbf{mod} \} = \text{dh}^{\textbf{LCS}}_A(M),
	\end{aligned}
	\] 
	\[
	\begin{aligned}
	\text{dh}^{\textbf{LCS}}_A(M) & = \{\text{the length of a shortest projective resolution of }M \text{ in } A\text{-}\textbf{mod} \} \le \\ & \le \{\text{the length of a shortest projective resolution of }M \text{ in } A\text{-}\textbf{mod}(\mathcal{C}) \} = \text{dh}^{\mathcal{C}}_A(M).
	\end{aligned}
	\]
	So we will denote this invariant by $\text{dh}_A(M)$, and we will call it the \textit{projective homological dimension} of $M$. 
	
	If $A$ is a Fr\'echet algebra, and $M$ is a Fr\'echet module, then we can define the \textit{weak homological dimension} of $M$:
	$$
	\begin{aligned}
		\text{w.dh}_A(M)  &= \min \{ n \in \mathbb{Z}_{\ge 0} : \text{Tor}^A_{n+1}(N, M) = 0 \text{ and } \text{Tor}^A_n(N, M) \text{ is Hausdorff for every } N \in \textbf{mod}\text{-}A(\textbf{Fr})  \} = \\ & = \{\text{the length of the shortest flat resolution of } M \} \in \{-\infty\} \cup [0, \infty].
	\end{aligned}
	$$
\end{definition}


\begin{definition}
	Let $A$ be a $\hat{\otimes}$-algebra. Then we can define the following invariants of $A$:
	$$
	\text{dgl}_{\mathcal{C}}(A) = \sup \{ \text{dh}_A(M) : M \in A\text{-}\textbf{mod}(\mathcal{C})\} - \textit{the left global dimension of } A.
	$$
	\[
	\text{dgr}_{\mathcal{C}}(A) = \sup \{ \text{dh}_{A^{\text{op}}}(M) : M \in \textbf{mod}\text{-}A(\mathcal{C})\} - \textit{the right global dimension of } A.
	\]
	$$
	\text{db}(A) = \text{dh}_{A \hat{\otimes} A^{\text{op}}}(A) - \textit{the bidimension of }A.
	$$
	For a Fr\'echet algebra $A$ we can consider \textit{weak dimensions}.
	\[
	\begin{aligned}
	\text{w.dg}(A) &= \sup \{\text{w.dh}_A(M) : M \in A\text{-}\textbf{mod}(\textbf{Fr}) \} = \\ &= \sup \{\text{w.dh}_A(M) : M \in \textbf{mod}\text{-}A(\textbf{Fr}) \} - \textit{the weak global dimension of }A.
	\end{aligned}
	\]
	$$
	\text{w.db}(A) = \text{w.dh}_{A \hat{\otimes} A^{\text{op}}}(A) - \textit{the weak bidimension of }A.
	$$
\end{definition}
Unfortunately, it is not known to the author whether global dimensions depend on the choice of $\mathcal{C}$. We will denote
\[
\text{dgl}(A) := \text{dgl}_{\textbf{LCS}}(A), \quad \text{dgr}(A) := \text{dgr}_{\textbf{LCS}}(A).
\]
For more details the reader can consult \cite{book:1032964}.

The following theorem demonstrates one of the most important properties of homological dimensions.
\begin{theorem}{\cite[Proposition 3.5.5]{Helem1986}}
	\label{subadditivity}
	Let $A$ be a $\hat{\otimes}$-algebra. 
	If $0 \rightarrow X' \rightarrow X \rightarrow X'' \rightarrow 0$ is an admissible sequence of left $A\text{-}\hat{\otimes}$-modules, then
	\[
		\begin{aligned}
		\text{dh}_A(X) &\le \max\{\text{dh}_A(X'),\text{dh}_A(X'') \} \\
		\text{dh}_A(X') &\le \max\{\text{dh}_A(X),\text{dh}_A(X'')-1 \} \\
		\text{dh}_A(X'') &\le \max\{\text{dh}_A(X),\text{dh}_A(X')+1 \}.
		\end{aligned}
	\]
	In particular, $\text{dh}_A(X) = \max \{ \text{dh}_A(X'), \text{dh}_A(X'') \}$ except when $\text{dh}_A(X) < \text{dh}_A(X'') = \text{dh}_A(X') + 1$.
\end{theorem}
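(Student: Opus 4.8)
The plan is to deduce all three inequalities, and then the final dichotomy, from the long exact sequence in $\text{Ext}$ attached to the given admissible short exact sequence, combined with the characterization of $\text{dh}_A$ recorded in the definition above. Writing $d = \text{dh}_A(X)$, $d' = \text{dh}_A(X')$ and $d'' = \text{dh}_A(X'')$, the key reformulation I would use is that $\text{dh}_A(M) \le n$ holds if and only if $\text{Ext}^k_A(M, N) = 0$ for all $k > n$ and all $N \in A\text{-}\textbf{mod}$; this is exactly the equivalence asserted in the definition of $\text{dh}_A$, since the minimal $n$ with $\text{Ext}^{n+1}_A(M,-) \equiv 0$ coincides with the length of a shortest projective resolution, above which all higher $\text{Ext}$-groups vanish.

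The first step is to produce the long exact sequence. Because the sequence $0 \to X' \to X \to X'' \to 0$ is admissible, it splits in $\textbf{LCS}$, hence is an admissible short exact sequence in the sense of relative homological algebra, and applying the cohomological functor $\text{Ext}^\bullet_A(-, N)$ for a fixed but arbitrary $N \in A\text{-}\textbf{mod}$ yields
\[
\cdots \to \text{Ext}^{k-1}_A(X', N) \to \text{Ext}^k_A(X'', N) \to \text{Ext}^k_A(X, N) \to \text{Ext}^k_A(X', N) \to \text{Ext}^{k+1}_A(X'', N) \to \cdots.
\]
The three inequalities are then a routine three-term argument. For the first, if $k > \max\{d', d''\}$ then the flanking terms $\text{Ext}^k_A(X'', N)$ and $\text{Ext}^k_A(X', N)$ vanish, forcing $\text{Ext}^k_A(X, N) = 0$, whence $d \le \max\{d', d''\}$. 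For the second, if $k > \max\{d, d''-1\}$ then $\text{Ext}^k_A(X, N)$ and $\text{Ext}^{k+1}_A(X'', N)$ vanish, forcing $\text{Ext}^k_A(X', N) = 0$. For the third, if $k > \max\{d, d'+1\}$ then $\text{Ext}^{k-1}_A(X', N)$ and $\text{Ext}^k_A(X, N)$ vanish, forcing $\text{Ext}^k_A(X'', N) = 0$.

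For the final statement I would argue by contradiction: assume $d < M := \max\{d', d''\}$ and rule out every case except $d'' = d' + 1$. If $d' > d''$, the second inequality gives $d' \le \max\{d, d''-1\}$, and since $d' > d''-1$ this forces $d' \le d$, contradicting $d < M = d'$; the case $d' = d''$ is excluded the same way. Hence $d' < d''$, so $M = d''$, and the third inequality $d'' \le \max\{d, d'+1\}$ together with $d < d''$ forces $d'' \le d'+1$, so $d'' = d'+1$. This is precisely the excluded configuration $d < d'' = d' + 1$, and in all remaining cases $d = M$, as claimed.

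The main obstacle, and the only genuinely topological point, is the very first step: the construction of the long exact sequence. In the purely algebraic setting every short exact sequence induces such a sequence, but over a $\hat{\otimes}$-algebra the functors $\text{Ext}$ are built from \emph{admissible} resolutions, and only admissible short exact sequences are guaranteed to produce the connecting homomorphisms together with exactness at each term. Thus the hypothesis that the sequence splits in $\textbf{LCS}$ is exactly what makes the relative-derived-functor machinery apply; once the long exact sequence is available, everything above is formal and identical to the algebraic case.
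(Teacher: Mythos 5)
Your proof is correct, and it is essentially the standard argument: the paper itself gives no proof, citing Helemskii's Proposition 3.5.5, and the proof there is exactly this relative-homological one — the long exact $\text{Ext}$ sequence attached to an admissible short exact sequence (which exists precisely because the sequence splits in $\textbf{LCS}$), followed by the three-term vanishing argument and the case analysis for the dichotomy. You also correctly identified the one genuinely topological point, namely that admissibility is what licenses the long exact sequence, and you correctly used the equivalence between $\text{Ext}^{n+1}$-vanishing and resolution length from the paper's definition of $\text{dh}_A$.
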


Moreover, the same estimates hold for weak homological dimensions of Fr\'echet modules over Fr\'echet algebras, for the proof see \cite{pirkovskiui2008weak}.
\begin{proposition}{\cite[Proposition 4.1, Corollary 4.4]{pirkovskiui2008weak}}
\label{weaksubadd}
Let $A$ be a Fr\'echet algebra, then for every $M \in A\text{-}\textbf{mod}(\textbf{Fr})$ we have
\[
\text{w.dh}_A(M) = \min\{ n \colon \text{Ext}^{n+1}_A(M, N^*) = 0 \text{ for every } N \in \textbf{mod}\text{-}A(\textbf{Fr}) \},
\] 
where $Y^*$ denotes the strong dual of $Y$.

As a corollary, for every admissible sequence
$0 \rightarrow X' \rightarrow X \rightarrow X'' \rightarrow 0$ of left Fr\'echet $A$-$\hat{\otimes}$-modules we have the following estimates:
	\begin{equation}
		\begin{aligned}
		\text{w.dh}_A(X) &\le \max\{\text{w.dh}_A(X'),\text{w.dh}_A(X'') \} \\
		\text{w.dh}_A(X') &\le \max\{\text{w.dh}_A(X),\text{w.dh}_A(X'')-1\} \\
		\text{w.dh}_A(X'') &\le \max\{\text{w.dh}_A(X),\text{w.dh}_A(X')+1\}.
		\end{aligned}
	\end{equation}
	
	In particular, $\text{w.dh}_A(X) = \max \{ \text{w.dh}_A(X'), \text{w.dh}_A(X'') \}$ except when $\text{w.dh}_A(X) < \text{w.dh}_A(X'') = \text{w.dh}_A(X') + 1$.
\end{proposition}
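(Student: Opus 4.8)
The plan is to prove the two assertions in turn, deriving the subadditivity estimates from the Ext-characterisation in exactly the way Theorem \ref{subadditivity} is proved for the projective dimension. The heart of the matter is the first statement, which converts a claim about $\text{Tor}$ (flat resolutions) into one about $\text{Ext}$ (projective resolutions) by passing to strong duals, so I would spend the bulk of the effort there.

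First I would fix a projective resolution $P_\bullet \to M$ of $M$ in $A\text{-}\textbf{mod}(\textbf{Fr})$ by free Fréchet modules $A \hat{\otimes} E_i$; such a resolution exists and, being admissible, computes both invariants. For a fixed $N \in \textbf{mod}\text{-}A(\textbf{Fr})$, the groups $\text{Tor}^A_\bullet(N, M)$ are the homology of the complex of Fréchet spaces $N \hat{\otimes}_A P_\bullet$, while $\text{Ext}^\bullet_A(M, N^*)$ is the cohomology of $\text{Hom}_A(P_\bullet, N^*)$. The key is the natural topological isomorphism
\[
\text{Hom}_A(P, N^*) \cong (N \hat{\otimes}_A P)^*,
\]
valid for the free modules $P = A \hat{\otimes} E$ occurring in the resolution: it reduces to the adjunction $\text{Hom}(E, N^*) \cong (N \hat{\otimes} E)^*$ together with the right-module version of \eqref{a simple tensor product}, namely $N \hat{\otimes}_A (A \hat{\otimes} E) \cong N \hat{\otimes} E$. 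Being natural in $P$, this isomorphism identifies the cochain complex computing $\text{Ext}^\bullet_A(M, N^*)$ with the strong dual of the chain complex computing $\text{Tor}^A_\bullet(N, M)$.

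The main obstacle is then a matter of topological homological algebra: relating vanishing of the cohomology of a dual complex to vanishing and Hausdorffness of the homology of the original complex. For a continuous map $f$ of Fréchet spaces one has $\ker f^* = (\text{im}\, f)^\perp$ and $\overline{\text{im}\, f^*} = (\ker f)^\perp$, so the cohomology of the dual complex in degree $n+1$ is the dual of the separated quotient of $H_{n+1}$, together with a correction governed by whether the image of the preceding differential is closed. Unwinding this — precisely the analysis I would import from \cite{pirkovskiui2008weak} — shows that $\text{Ext}^{n+1}_A(M, N^*) = 0$ for all $N$ holds if and only if $\text{Tor}^A_{n+1}(N, M) = 0$ and $\text{Tor}^A_n(N, M)$ is Hausdorff for all $N$, which is exactly the condition $\text{w.dh}_A(M) \le n$. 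This is the delicate point: vanishing of homology alone does not control vanishing of the dual cohomology without the Hausdorff hypothesis, and it is exactly this phenomenon that forces the Hausdorff condition into the definition of the weak homological dimension.

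Finally, with the Ext-characterisation established, the subadditivity estimates follow formally. Applying the contravariant functors $\text{Ext}^\bullet_A(-, N^*)$ to the admissible short exact sequence $0 \to X' \to X \to X'' \to 0$ (admissibility guarantees that it remains exact after the relevant functors are applied) yields, for each $N \in \textbf{mod}\text{-}A(\textbf{Fr})$, a long exact sequence
\[
\cdots \to \text{Ext}^n_A(X'', N^*) \to \text{Ext}^n_A(X, N^*) \to \text{Ext}^n_A(X', N^*) \to \text{Ext}^{n+1}_A(X'', N^*) \to \cdots.
\]
The three inequalities, and the sharp ``in particular'' statement, then drop out of the same elementary chase used to prove Theorem \ref{subadditivity}: reading off which Ext-groups must vanish once two of the three dimensions are known gives each estimate, and the case analysis on whether $\text{w.dh}_A(X) < \text{w.dh}_A(X'') = \text{w.dh}_A(X') + 1$ handles the exceptional equality. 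In this way the Corollary reduces entirely to the first statement.
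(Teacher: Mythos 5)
Your proposal is correct and follows essentially the same route as the source: the paper itself gives no proof of this proposition, citing it directly from \cite[Proposition 4.1, Corollary 4.4]{pirkovskiui2008weak}, and your argument --- dualizing a free Fr\'echet resolution via $\text{Hom}_A(P, N^*) \cong (N \hat{\otimes}_A P)^*$, using the closed-range/annihilator analysis to show that vanishing of the dual cohomology encodes both $\text{Tor}^A_{n+1}(N,M) = 0$ and Hausdorffness of $\text{Tor}^A_n(N,M)$, and then running the standard long-exact-sequence chase for the corollary --- is precisely the strategy of the cited proof. You also correctly isolate the one genuinely delicate point, namely that the Hausdorff condition in the definition of $\text{w.dh}_A$ is exactly what the duality forces.
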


\begin{proposition}
	\label{free}
	Let $R \in \textbf{alg}(\mathcal{C})$, let $A$ be a $R$-$\hat{\otimes}$-algebra, which is free as a left $R$-module, and let $M$ be a right $A$-$\hat{\otimes}$-module.
	\begin{enumerate}[label=(\arabic*)]
		\item For every projective resolution of $M$ in $\textbf{mod}\text{-}R$
		$$
		0 \leftarrow M \leftarrow P_0 \leftarrow P_1 \leftarrow P_2 \leftarrow \dots
		$$
		the complex 
		\begin{equation}
		\label{tensresol}
		0 \leftarrow M \hat{\otimes}_R A \leftarrow P_0 \hat{\otimes}_R A \leftarrow P_1 \hat{\otimes}_R A \leftarrow P_2 \hat{\otimes}_R A \leftarrow \dots
		\end{equation}
		is a projective resolution of $M \hat{\otimes}_R A$ in the category of right $A$-$\hat{\otimes}$-modules. In particular, $$\text{dh}_{A^{\text{op}}}(M \hat{\otimes}_R A) \le \text{dh}_{R^\text{op}}(M).$$
		\item Moreover, if $\mathcal{C} \subset \textbf{Fr}$, then for every flat resolution of $M$ in $\textbf{mod}\text{-}R$
		$$
		0 \leftarrow M \leftarrow F_0 \leftarrow F_1 \leftarrow F_2 \leftarrow \dots
		$$
		the complex
		\begin{equation}
		\label{tensflatresol}
		0 \leftarrow M \hat{\otimes}_R A \leftarrow F_0 \hat{\otimes}_R A \leftarrow F_1 \hat{\otimes}_R A \leftarrow F_2 \hat{\otimes}_R A \leftarrow \dots
		\end{equation}
		is a flat resolution of $M \hat{\otimes}_R A$ in the category of right $A$-$\hat{\otimes}$-modules. In particular, $$\text{w.dh}_{A^{\text{op}}}(M \hat{\otimes}_R A) \le \text{w.dh}_{R^\text{op}}(M).$$
	\end{enumerate} 
\end{proposition}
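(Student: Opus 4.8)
The plan is to apply the functor $(-) \hat{\otimes}_R A$ to the given resolution of $M$ and to verify that the resulting complex \eqref{tensresol} (resp. \eqref{tensflatresol}) is again a resolution of $M \hat{\otimes}_R A$ by projective (resp. flat) right $A$-$\hat{\otimes}$-modules. Two things need checking: that each tensored term stays projective (resp. flat), and that the augmented complex remains admissible.

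The first point is immediate from Lemma \ref{lemma1}(1): for a projective right $R$-$\hat{\otimes}$-module $X$ the module $X \hat{\otimes}_R A$ is projective in $\textbf{mod}\text{-}A$, so each $P_i \hat{\otimes}_R A$ is projective; replacing \emph{projective} by \emph{flat} throughout, the same lemma yields that each $F_i \hat{\otimes}_R A$ is flat.

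The second, more delicate point is admissibility of the tensored complex. Here I would use that $A$ is free as a left $R$-$\hat{\otimes}$-module, say $A \cong R \hat{\otimes} G$ for some $G \in \mathcal{C}$, so that the relative tensor product $P_i \hat{\otimes}_R A \cong P_i \hat{\otimes} G$ becomes an absolute one. Since the augmented resolution $0 \leftarrow M \leftarrow P_0 \leftarrow P_1 \leftarrow \dots$ splits in $\textbf{LCS}$ by the definition of admissibility, applying the functor $(-) \hat{\otimes} G$ to this splitting transports it to the tensored complex. This is precisely the mechanism of Proposition \ref{free admissible}, which I would invoke directly; the flat case is identical, with the hypothesis $\mathcal{C} \subset \textbf{Fr}$ ensuring we stay within Fr\'echet modules, where $\text{w.dh}$ is computed as the length of the shortest flat resolution.

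It then follows that \eqref{tensresol} is a genuine projective resolution of $M \hat{\otimes}_R A$ and \eqref{tensflatresol} a genuine flat resolution. The dimension inequalities are then formal: applying the construction to a shortest projective (resp. flat) resolution of $M$ produces such a resolution of $M \hat{\otimes}_R A$ of the same length, whence $\text{dh}_{A^{\text{op}}}(M \hat{\otimes}_R A) \le \text{dh}_{R^{\text{op}}}(M)$, and likewise in the weak case. I expect the admissibility step to be the only genuine subtlety, but the freeness of $A$ as a left $R$-module is exactly what turns the relative tensor product into an absolute one and makes Proposition \ref{free admissible} applicable, so it reduces to a citation rather than to new work.
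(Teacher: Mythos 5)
Your proposal is correct and matches the paper's own proof essentially verbatim: the paper likewise cites Lemma \ref{lemma1} for the projectivity (resp.\ flatness) of each tensored term and Proposition \ref{free admissible} for admissibility of the tensored complex, with the dimension bounds following formally. Your unpacking of the isomorphism $P_i \hat{\otimes}_R A \cong P_i \hat{\otimes} G$ is exactly the mechanism inside Proposition \ref{free admissible}, so nothing new is needed.
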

\begin{proof}
	\indent
	\begin{enumerate}[label=(\arabic*)]
		\item  Lemma \ref{lemma1} implies that $P_i \hat{\otimes}_R A$ is a projective right $A$-$\hat{\otimes}$-module for all $i$. The complex \eqref{tensresol} is admissible, because the functor $(-) \hat{\otimes}_R A$ for a free $A$ preserves admissibility (due to the Proposition \ref{free admissible}), so it defines a projective resolution of $M \hat{\otimes}_R A$ in the category $\textbf{mod}\text{-}A$.
		\item Due to the Lemma \ref{lemma1} the modules $F_i \hat{\otimes}_R A$ are flat right $A$-$\hat{\otimes}$-modules for all $i$. Then the rest of the proof is the same as in (1).
	\end{enumerate}
\end{proof}

\begin{lemma}
	\label{trans}
	Let $R$ be a $\hat{\otimes}$-algebra, and consider an $R$-$\hat{\otimes}$-algebra $A$.
	\begin{enumerate}
		\item If $A$ is projective as a right $R$-$\hat{\otimes}$-module, and $M$ is projective as a right $A$-$\hat{\otimes}$-module, then $M$ is projective as a right $R$-$\hat{\otimes}$-module. In this case for every $X \in \textbf{mod}\text{-}A$ we have
		\begin{equation}
		\label{lower bound for proj dims}
			\text{dh}_{R^{\text{op}}}(X) \le \text{dh}_{A^{\text{op}}}(X).
		\end{equation}
		\item If $A$ is flat as a right $R$-$\hat{\otimes}$-module, and $M$ is flat as a right $A$-$\hat{\otimes}$-module, then $M$ is flat as a right $R$-$\hat{\otimes}$-module. If $R, A$ are Fr\'echet algebras, then for every $X \in \textbf{mod}\text{-}A(\textbf{Fr})$ we have
		\begin{equation}
		\label{lower bound for weak dims}
			\text{w.dh}_{A^{\text{op}}}(X) \le \text{w.dh}_{R^{\text{op}}}(X).
		\end{equation}
	\end{enumerate} 
\end{lemma}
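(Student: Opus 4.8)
Lemma \ref{trans} — both parts follow the same two-step template: (i) show that a module enjoying a homological property over $A$ inherits it over $R$, using that $A$ itself has that property over $R$; (ii) deduce the dimension inequality by transporting a resolution from one category to the other.

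For part (1), the plan is to prove the transitivity statement for projectivity first. If $M$ is a projective right $A$-$\hat{\otimes}$-module, then by \cite[Theorem 3.1.27]{Helem1986} it is a retract of a free module $E \hat{\otimes} A$ for some $E \in \mathcal{C}$; but $A$ is assumed projective over $R$, so $E \hat{\otimes} A$ is a retract of some $E \hat{\otimes} (E' \hat{\otimes} R) \cong (E \hat{\otimes} E') \hat{\otimes} R$, which is free over $R$. Composing the two retractions exhibits $M$ as a retract of a free right $R$-$\hat{\otimes}$-module, hence projective over $R$. For the inequality \eqref{lower bound for proj dims}, I would take a shortest projective resolution of $X$ in $\textbf{mod}\text{-}A$ of length $n = \text{dh}_{A^{\text{op}}}(X)$. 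This resolution is admissible, so it remains admissible in $\textbf{LCS}$; by the transitivity just proved each term is also projective as a right $R$-$\hat{\otimes}$-module, so the same complex is a projective resolution of $X$ over $R$ of length $n$. Thus $\text{dh}_{R^{\text{op}}}(X) \le n = \text{dh}_{A^{\text{op}}}(X)$.

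For part (2), the structure mirrors part (1), but with flatness in place of projectivity, which reverses the inequality. The transitivity of flatness is cleanest via the associativity of the completed tensor product: for any $N \in \textbf{mod}\text{-}R$ one has natural isomorphisms relating $N \hat{\otimes}_R M$ to $(N \hat{\otimes}_R A) \hat{\otimes}_A M$, and since $A$ is flat over $R$ and $M$ is flat over $A$, the composite functor $N \mapsto N \hat{\otimes}_R M$ is exact, so $M$ is flat over $R$. For the weak-dimension inequality \eqref{lower bound for weak dims} the direction is the opposite of the projective case: I would take a shortest flat resolution of $X$ over $R$ of length $m = \text{w.dh}_{R^{\text{op}}}(X)$ and apply $(-) \hat{\otimes}_R A$. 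The point is that for a right $A$-$\hat{\otimes}$-module $X$ the canonical map makes $X \hat{\otimes}_R A \to X$ split, so $X$ is a retract over $A$ of $X \hat{\otimes}_R A$; combining this with Proposition \ref{free} (applied to flat resolutions, using $\mathcal{C} \subseteq \textbf{Fr}$) and the subadditivity of weak dimension under retracts yields $\text{w.dh}_{A^{\text{op}}}(X) \le \text{w.dh}_{R^{\text{op}}}(X)$.

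The main obstacle is the flat case, and specifically the care needed in the topological (Fr\'echet) setting: unlike projectivity, flatness is defined through exactness of a tensor functor together with the \emph{Hausdorffness} of the relevant $\text{Tor}$ groups, so I must ensure the transported resolution genuinely computes $\text{Tor}$ and that no non-Hausdorff pathology appears when identifying $(N \hat{\otimes}_R A) \hat{\otimes}_A M \cong N \hat{\otimes}_R M$. The hypotheses that $R$ and $A$ are Fr\'echet algebras and that we restrict to Fr\'echet modules are exactly what is needed to invoke Proposition \ref{weaksubadd} and keep the tensor products well-behaved; the projective case, by contrast, is essentially formal once the retract transitivity is in hand.
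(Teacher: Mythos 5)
Your part (1) --- transitivity of projectivity via retracts of free modules (\cite[Theorem 3.1.27]{Helem1986}), followed by transporting a shortest admissible projective resolution from $\textbf{mod}\text{-}A$ to $\textbf{mod}\text{-}R$ (admissibility is an \textbf{LCS}-level splitting, so restriction of scalars preserves it) --- is exactly the paper's argument; the paper phrases the two retractions as direct-sum decompositions $A \oplus S \cong X \hat{\otimes} R$ and $M \oplus N \cong E \hat{\otimes} A$ and concatenates them. Likewise your proof of transitivity of flatness via the associativity isomorphism $M \hat{\otimes}_R N \cong M \hat{\otimes}_A (A \hat{\otimes}_R N)$ is verbatim the paper's proof of part (2), borrowed from \cite[Lemma 7.2.2]{NonComm2001}.

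The genuine gap is your derivation of \eqref{lower bound for weak dims}. The step ``for a right $A$-module $X$ the canonical map $X \hat{\otimes}_R A \to X$ splits, so $X$ is a retract over $A$'' is false: the multiplication map is $A$-linear, but its only natural section $x \mapsto x \otimes 1$ is merely $R$-linear, since in $X \hat{\otimes}_R A$ one has $x \otimes a \neq xa \otimes 1$ in general --- this is precisely why Proposition \ref{adm} asserts only an admissible monomorphism in $\textbf{mod}\text{-}R$. Moreover, the inequality you were trying to reach is false as literally printed: for $R = \mathbb{C}$ every Fr\'echet space is flat (applying $(-) \hat{\otimes} M$ to an \textbf{LCS}-split complex keeps it split), so $\text{w.dh}_{A^{\text{op}}}(X) \le \text{w.dh}_{\mathbb{C}}(X) = 0$ would force every Fr\'echet module over every Fr\'echet algebra to be flat, contradicted already by $A = \mathbb{C}[\varepsilon]/(\varepsilon^2)$, $X = \mathbb{C}$. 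In fact \eqref{lower bound for weak dims} is mis-stated in the paper: the subscripts must be interchanged, $\text{w.dh}_{R^{\text{op}}}(X) \le \text{w.dh}_{A^{\text{op}}}(X)$, as both the label (``lower bound'') and its use in Proposition \ref{conditions for lower estimates} (to obtain $\text{w.dg}(R) \le \text{w.dg}(A)$) confirm. With the correct direction, the proof is the exact analogue of your part (1) and is what the paper's ``this immediately implies our statement'' means: a shortest flat resolution of $X$ in $\textbf{mod}\text{-}A(\textbf{Fr})$ is admissible as a complex of right $R$-modules and termwise flat over $R$ by the transitivity you already established, hence is a flat resolution of the same length in $\textbf{mod}\text{-}R(\textbf{Fr})$. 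Your (reasonable) instinct to honor the reversed direction of the printed inequality led you to invent a mechanism that cannot work; the correct statement goes the same way as the projective case.
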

\begin{proof}
	\begin{enumerate}
		\item[(1)]  Due to \cite[Theorem 3.1.27]{Helem1986} we can fix an isomorphism of right $R$-$\hat{\otimes}$-modules $A \oplus S \cong X \hat{\otimes} R$ for some right $R$-$\hat{\otimes}$-module $S$.
		
		Suppose that $M$ is a projective right $A$-$\hat{\otimes}$-module. Then due to \cite[Theorem 3.1.27]{Helem1986} there exists a right module $N$ such that
		$M \oplus N = E \hat{\otimes} A$ as right $A$-modules, but then
		\[
		M \oplus N \oplus (E \hat{\otimes} S) \cong E \hat{\otimes} A \oplus (E \hat{\otimes} S) \cong E \hat{\otimes} (X \hat{\otimes} R) = ( E \hat{\otimes} X) \hat{\otimes} R.
		\]
		\item[(2)] As in the proof of \cite[Lemma 7.2.2]{NonComm2001}, we notice that for any right $R$-$\hat{\otimes}$-module $N$ we have
		\[
		M \hat{\otimes}_R N \cong M \hat{\otimes}_A (A \hat{\otimes}_R N),
		\] 
		and this immediately implies our statement.
	\end{enumerate}
\end{proof}

%

\begin{lemma}
	\label{auto}
	Let $R$ is a $\hat{\otimes}$-algebra, $\alpha : R \rightarrow R$ is an
	automorphism and $M$ is a right $R$-module, then $\text{dh}_{R^{\text{op}}}(M_\alpha) =\text{dh}_{R^{\text{op}}}(M)$ and if $R$ is a Fr\'echet algebra, then  $\text{w.dh}_{R^{\text{op}}}(M_\alpha) =\text{w.dh}_{R^{\text{op}}}(M)$.
\end{lemma}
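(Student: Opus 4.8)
The plan is to show that twisting by $\alpha$ is an auto-equivalence of $\textbf{mod}\text{-}R$ that preserves projectives and flats, so that it carries a shortest projective (resp.\ flat) resolution of $M$ to one of $M_\alpha$ of the same length.

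First I would note that $(-)_\alpha$ is a functor $\textbf{mod}\text{-}R \to \textbf{mod}\text{-}R$ which is the identity on underlying locally convex spaces and on morphisms: if $f : M \to N$ is a morphism of right $R$-$\hat{\otimes}$-modules, then the same continuous linear map satisfies $f(m \circ a) = f(m\alpha(a)) = f(m)\alpha(a) = f(m) \circ a$, hence is a morphism $M_\alpha \to N_\alpha$. Since $\alpha$ is an automorphism, $(-)_{\alpha^{-1}}$ is an inverse functor because $(M_\alpha)_{\alpha^{-1}} = M$ on the nose, so $(-)_\alpha$ is an auto-equivalence (indeed an isomorphism) of the category $\textbf{mod}\text{-}R$. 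In particular it is additive, and because admissibility is a property of the underlying complex in $\textbf{LCS}$ (which $(-)_\alpha$ leaves unchanged), it preserves admissible complexes and admissible morphisms.

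The key step is that $(-)_\alpha$ preserves free modules. For a free module $E \hat{\otimes} R$ I would check that $\text{Id}_E \hat{\otimes}\,\alpha : E \hat{\otimes} R \to (E \hat{\otimes} R)_\alpha$ is an isomorphism of right $R$-$\hat{\otimes}$-modules: it is a topological isomorphism of locally convex spaces since $\alpha$ is a topological automorphism, and it is $R$-linear because $\text{Id}_E \hat{\otimes}\,\alpha\bigl((e \otimes r)\cdot a\bigr) = e \otimes \alpha(r)\alpha(a)$ equals $(e \otimes \alpha(r)) \circ a$ computed with the twisted action of $(E \hat{\otimes} R)_\alpha$. Thus $(E \hat{\otimes} R)_\alpha \cong E \hat{\otimes} R$ is again free. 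Being an additive functor that preserves free modules and retracts, $(-)_\alpha$ sends every projective module (a retract of a free module, by \cite[Theorem 3.1.27]{Helem1986}) to a projective module; combined with Proposition \ref{free admissible}, which makes free modules flat, it likewise sends every flat module to a flat module.

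Putting these together, applying $(-)_\alpha$ to a shortest projective resolution of $M$ yields an admissible complex of projectives resolving $M_\alpha$ of the same length, so $\text{dh}_{R^{\text{op}}}(M_\alpha) \le \text{dh}_{R^{\text{op}}}(M)$; applying the same argument to $M_\alpha$ with the automorphism $\alpha^{-1}$ gives the reverse inequality, whence equality. In the Fr\'echet case the identical argument with flat resolutions in place of projective ones, using the flat-resolution description of $\text{w.dh}$, gives $\text{w.dh}_{R^{\text{op}}}(M_\alpha) = \text{w.dh}_{R^{\text{op}}}(M)$. The only point requiring care is the freeness step, where the invertibility and bicontinuity of $\alpha$ are essential — this is exactly why the statement is formulated for automorphisms rather than arbitrary endomorphisms.
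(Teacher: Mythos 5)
Your argument for the projective half is correct, and it is a mild variant of the paper's: you justify that $(-)_\alpha$ preserves projectivity by showing $\mathrm{Id}_E \hat{\otimes}\,\alpha : E \hat{\otimes} R \to (E \hat{\otimes} R)_\alpha$ is an isomorphism of right $R$-$\hat{\otimes}$-modules (so frees go to frees, hence retracts of frees to retracts of frees), whereas the paper instead verifies the lifting property directly via the chain of natural isomorphisms $\mathrm{Hom}(P_\alpha, Y) \simeq \mathrm{Hom}(P, Y_{\alpha^{-1}}) \simeq \mathrm{Hom}(P, X_{\alpha^{-1}}) \simeq \mathrm{Hom}(P_\alpha, X)$. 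Both routes are valid, and your observation that $(-)_\alpha$ is an isomorphism of categories which is the identity on underlying locally convex spaces (hence preserves admissibility of complexes) is exactly the paper's starting point. Applying the twist to a shortest projective resolution and then running the same argument for $\alpha^{-1}$ correctly yields $\text{dh}_{R^{\text{op}}}(M_\alpha) = \text{dh}_{R^{\text{op}}}(M)$.

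The flatness step, however, contains a genuine gap. The inference ``$(-)_\alpha$ is additive, preserves frees and retracts; frees are flat by Proposition \ref{free admissible}; therefore $(-)_\alpha$ sends flats to flats'' is a non sequitur: a flat module is \emph{not} in general a retract of a free module (retracts of frees are projective), and Proposition \ref{free admissible} says nothing about arbitrary flat modules. Moreover, flatness is not a purely categorical property preserved by any isomorphism of $\textbf{mod}\text{-}R$: it is defined through exactness of the tensor functor, which is extra structure, and $(-)_\alpha$ does not literally commute with $\hat{\otimes}_R$. What is actually needed is the canonical isomorphism $F_\alpha \hat{\otimes}_R X \cong F \hat{\otimes}_R \bigl({}_{\alpha^{-1}} X\bigr)$, induced by $f \otimes x \mapsto f \otimes x$ (balancedness uses invertibility of $\alpha$), together with the observation that ${}_{\alpha^{-1}}(\cdot)$ preserves admissibility of complexes of left modules; this is precisely how the paper proves that $F$ flat implies $F_\alpha$ flat. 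Once this substitution is made, your resolution-counting argument for $\text{w.dh}$ — twist a shortest flat resolution, then apply the same reasoning with $\alpha^{-1}$ — goes through verbatim.
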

\begin{proof}
	The proof relies on the fact that $(\cdot)_\alpha : \textbf{mod}\text{-}R \rightarrow \textbf{mod}\text{-}R$ and ${}_\alpha (\cdot) : R\text{-}\textbf{mod} \rightarrow R\text{-}\textbf{mod} $ can be viewed as functors between $\textbf{mod}\text{-}R$ and $R\text{-}\textbf{mod}$, which preserve admissibility of morphisms, projectivity and flatness of modules. 
	
	Indeed, if $f : M \rightarrow N$ is an admissible module homomorphism, then $f_\alpha : M_\alpha \rightarrow N_\alpha$ is admissible, because $\square f_\alpha = \square f$, where $\square : \textbf{mod}\text{-}R \rightarrow \textbf{LCS}$ denotes the forgetful functor. The same goes for ${}_\alpha (\cdot)$.
	
	Let $P \in \textbf{mod}\text{-}R$ be projective. Then for any admissible epimorphism $\varphi : X \rightarrow Y$ we have the following chain of canonical isomorphisms:
	$$
	\text{Hom}(P_\alpha, Y) \simeq \text{Hom}(P, Y_{\alpha^{-1}}) \simeq \text{Hom}(P, X_{\alpha^{-1}}) \simeq \text{Hom}(P_\alpha, X).
	$$
	Let $F \in \textbf{mod}\text{-}R$ be flat. Then $F_\alpha \hat{\otimes}_R X \simeq F \hat{\otimes}_R ({}_{\alpha^{-1}} X)$ and we already know that ${}_{\alpha^{-1}} (\cdot)$ preserves admissibility.
\end{proof}

\section{Estimates for the bidimension and projective global dimensions of holomorphic Ore extensions}
\label{Estimates for the bidimension and projective global dimensions of holomorphic Ore extensions}
\subsection{Bimodules of relative differentials}
Firstly, let us give several necessary algebraic definitions:
\begin{definition}
	\label{def1}
	Let $S$ be an algebra, $A$ be an $S$-algebra and $M$ be an $A$-bimodule. Then an $S$-linear map $\delta : A \rightarrow M$ is called an $S$\textit{-derivation} if the following relation holds for every $a, b \in A$:
	$$
	\delta(ab) = \delta(a) b + a \delta(b).
	$$
\end{definition}

\begin{example}
	Let $\alpha$ be an endomorphism of an algebra $A$. Then an $\alpha$-derivation is precisely an $A$-derivation $\delta : A \rightarrow {}_{\alpha} A$.
\end{example}
\noindent
The following definition is due to J. Cuntz and D. Quillen, see \cite{CuntzQuil}:

\begin{definition}
	\label{reldifdef}
	Suppose that $S$ is an algebra and $(A, \eta)$ is an $S$-algebra, where $\eta : S \rightarrow A$ is an algebra homomorphism. Denote by $\overline{A} = A / \text{Im}(\eta(S))$ the $S$-bimodule quotient. Then we can define the bimodule of \textit{relative differential 1-forms} $\Omega^1_S(A) = A \otimes_S \overline{A}$. The elementary tensors in $\Omega^1_S(A)$ are usually denoted by $a_0 \otimes \overline{a_1} = a_0 d a_1$. The $A$-bimodule structure on $\Omega^1_S(A)$ is uniquely defined by the following relations:
	$$
	b \circ (a_0 d a_1) = b a_0 d a_1,\quad (a_0 d a_1) \circ b = a_0 d (a_1b) - a_0 a_1 d b.
	$$
\end{definition}
The bimodule of relative differential 1-forms together with the canonical $S$-derivation $$d_A : A \rightarrow \Omega^1_S(A),\quad d_A(a) = 1 \otimes \overline{a} = da$$ has the following universal property:
\begin{proposition}[\cite{CuntzQuil}, Proposition 2.4]
	\label{reldifuniv}
	For every $A$-bimodule $M$ and an $S$-derivation $D : A \rightarrow M$ there is a unique $A$-bimodule morphism $\varphi : \Omega^1_S(A) \rightarrow M$ such that the following diagram is commutative:
	\begin{equation}
		\begin{tikzcd}
		\Omega^1_R(A) \ar[r, dashrightarrow, "\exists! \, \varphi"] & M \\
		A \ar[u, "d_A"] \ar[ur, "D"']
		\end{tikzcd}
	\end{equation}
\end{proposition}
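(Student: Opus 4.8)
The plan is to establish the universal property directly from the construction $\Omega^1_S(A) = A \otimes_S \overline{A}$: first pin down uniqueness, which forces the formula for $\varphi$, and then verify that this forced formula is well-defined and has the required properties. Throughout I write $\Omega^1_S(A)$ (the $R$ in the diagram is a typo for $S$).

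I would begin with uniqueness. Every elementary tensor can be written as $a_0\,da_1 = a_0 \circ d_A(a_1)$, since $d_A(a_1) = 1 \otimes \overline{a_1} = 1\,da_1$ and left multiplication gives $a_0 \circ (1\,da_1) = a_0\,da_1$. Hence $\Omega^1_S(A)$ is generated as a left $A$-module by the image of $d_A$. Consequently, if $\varphi$ is an $A$-bimodule morphism with $\varphi \circ d_A = D$, then necessarily $\varphi(a_0\,da_1) = a_0 \circ \varphi(d_A(a_1)) = a_0 \cdot D(a_1)$, so $\varphi$ is completely determined on elementary tensors and is therefore unique.

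For existence I would define $\varphi$ by exactly this formula, $\varphi(a_0\,da_1) = a_0 \cdot D(a_1)$. The main obstacle is well-definedness, because $\Omega^1_S(A)$ is a quotient-and-tensor construction: one must check that the bilinear map $A \times A \to M$, $(a_0, a_1) \mapsto a_0 \cdot D(a_1)$, descends both through the passage to $\overline{A} = A/\text{Im}(\eta(S))$ in the second argument and through the $S$-balancing that defines $\otimes_S$. Both reductions rest on two elementary consequences of $D$ being an $S$-derivation. First, the Leibniz rule forces $D(1) = D(1\cdot 1) = 2D(1)$, so $D(1)=0$, and then $S$-linearity gives $D(\eta(s)) = D(s\cdot 1) = s\cdot D(1) = 0$ for all $s\in S$; this kills $\text{Im}(\eta(S))$ and so factors the map through $\overline{A}$. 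Second, the Leibniz identity $D(\eta(s)a_1) = D(\eta(s))a_1 + \eta(s)D(a_1) = \eta(s)D(a_1)$ yields $a_0\,D(\eta(s)a_1) = a_0\eta(s)\cdot D(a_1)$, which is precisely the $S$-balancing condition. Thus the map factors through $A \otimes_S \overline{A} = \Omega^1_S(A)$, giving a well-defined linear $\varphi$.

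Once $\varphi$ is well-defined, the remaining checks are routine. The relation $\varphi \circ d_A = D$ is immediate, since $d_A(a) = 1\,da$ gives $\varphi(d_A(a)) = 1\cdot D(a) = D(a)$. Left $A$-linearity is clear from the formula, as $\varphi(b\circ(a_0\,da_1)) = \varphi(ba_0\,da_1) = ba_0\cdot D(a_1) = b\cdot\varphi(a_0\,da_1)$. Right $A$-linearity is the only step genuinely using the bimodule structure of $\Omega^1_S(A)$: applying $\varphi$ to $(a_0\,da_1)\circ b = a_0\,d(a_1b) - a_0a_1\,db$ and expanding $D(a_1b) = D(a_1)b + a_1 D(b)$ by Leibniz makes the term $a_0a_1 D(b)$ cancel, leaving $a_0 D(a_1)b = \varphi(a_0\,da_1)\cdot b$. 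This confirms that $\varphi$ is the required $A$-bimodule morphism and completes the proof.
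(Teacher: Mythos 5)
Your proposal is correct: uniqueness via left-$A$-generation by $\operatorname{Im}(d_A)$, well-definedness of $(a_0,a_1)\mapsto a_0\cdot D(a_1)$ through the quotient $\overline{A}$ (using $D(1)=0$, hence $D(\eta(s))=0$) and through the $S$-balancing, and the Leibniz cancellation for right $A$-linearity are all sound, and you rightly note the $\Omega^1_R$/$\Omega^1_S$ typo in the diagram. The paper itself gives no proof, citing \cite{CuntzQuil} (Proposition 2.4), and your argument is precisely the standard one from that reference, so there is nothing to add.
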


%
%

\begin{proposition}[\cite{CuntzQuil}, Proposition 2.5]
	\label{reldifex}
	The following sequence of $A$-bimodules is exact:
	\begin{equation}
		\begin{tikzcd}
			0 \ar[r] & \Omega^1_S A \ar[r, "j"] & A \otimes_S A \ar[r, "m"] & A \ar[r] & 0,
		\end{tikzcd}
	\end{equation}
	where 
	$j(a_0 \otimes \overline{a_1}) = j(a_0 d a_1) = a_0 a_1 \otimes 1 - a_0 \otimes a_1$ and 
	$m$ denotes the multiplication.
\end{proposition}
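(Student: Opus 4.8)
The plan is to establish the three required facts separately: that $m$ is a surjective $A$-bimodule map, that $j$ is injective, and that $\operatorname{Im}(j) = \ker(m)$. Surjectivity of $m$ is immediate since $m(a \otimes 1) = a$; in fact $a \mapsto a \otimes 1$ is a left $A$-module splitting, so $m$ is even a split epimorphism of left modules. Before anything else I would check that $j$ is well-defined: the formula $a_0 \otimes \overline{a_1} \mapsto a_0 a_1 \otimes 1 - a_0 \otimes a_1$ must be independent of the lift of $\overline{a_1} \in \overline{A} = A/\operatorname{Im}(\eta)$ and compatible with the $S$-balancing of $A \otimes_S \overline{A}$. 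Both reduce to the identity $a \otimes \eta(s) = a\eta(s) \otimes 1$ in $A \otimes_S A$: if $a_1 \in \operatorname{Im}(\eta)$ the two terms cancel, and moving $\eta(s)$ across the tensor matches the balancing on either side. One then verifies on elementary tensors that $j$, like $m$, respects the left and right $A$-actions of Definition \ref{reldifdef}.

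For injectivity I would build an explicit left inverse. Define $\rho : A \otimes_S A \to \Omega^1_S(A)$ by $\rho(a \otimes b) = a\, db = a \otimes \overline{b}$; this is well-defined over $S$ for the same balancing reason as above. Since $\overline{1} = 0$ in $\overline{A}$ (because $1 = \eta(1) \in \operatorname{Im}(\eta)$), a direct computation gives
\[
\rho\bigl(j(a_0\, da_1)\bigr) = a_0 a_1 \otimes \overline{1} - a_0 \otimes \overline{a_1} = -\,a_0 \otimes \overline{a_1},
\]
so $\rho \circ j = -\operatorname{id}_{\Omega^1_S(A)}$ and $j$ is injective.

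It remains to prove exactness in the middle. The inclusion $\operatorname{Im}(j) \subseteq \ker(m)$ follows from $m(a_0 a_1 \otimes 1 - a_0 \otimes a_1) = a_0 a_1 - a_0 a_1 = 0$. For the reverse inclusion I would argue directly: given $x = \sum_i a_i \otimes b_i \in \ker(m)$ we have $\sum_i a_i b_i = 0$, hence $\bigl(\sum_i a_i b_i\bigr) \otimes 1 = 0$ and
\[
j\Bigl(\sum_i a_i\, db_i\Bigr) = \Bigl(\sum_i a_i b_i\Bigr) \otimes 1 - \sum_i a_i \otimes b_i = -\,x,
\]
so $x = -\,j\bigl(\sum_i a_i\, db_i\bigr) \in \operatorname{Im}(j)$. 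Together with surjectivity of $m$ and injectivity of $j$, this yields the exact sequence.

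The routine but genuinely load-bearing part, and the step I expect to require the most care, is the well-definedness of $j$ and $\rho$: every computation silently uses the relations defining $A \otimes_S A$ and the quotient $\overline{A}$, and one must confirm that the stated formulas descend through the $S$-balancing and the collapse of $\operatorname{Im}(\eta)$. Once these are in place, both injectivity (via $\rho$) and middle-exactness (via the direct cancellation) are short formal computations, and no appeal to the universal property of Proposition \ref{reldifuniv} is strictly needed, though it offers an alternative route in which $j$ appears as the unique bimodule map induced by the $S$-derivation $a \mapsto a \otimes 1 - 1 \otimes a$ valued in $\ker(m)$.
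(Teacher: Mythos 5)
Your proof is correct, and since the paper itself gives no proof of this proposition --- it is quoted from \cite{CuntzQuil}, Proposition 2.5 --- the relevant comparison is with the standard argument there, which yours matches: the explicit map $\sum_i a_i \otimes b_i \mapsto \sum_i a_i\, db_i$ identifying $\ker(m)$ with $\Omega^1_S(A)$ (your $\rho$, up to sign) is exactly the classical device, and your well-definedness checks for $j$ and $\rho$, using $a \otimes \eta(s) = a\eta(s) \otimes 1$ and $\overline{1} = 0$, are the genuinely load-bearing steps, correctly identified and correctly carried out. Nothing further is needed.
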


In the appendix of this paper we compute the bimodule of relative differential 1-forms of Ore extensions, see Proposition \ref{algebraic Ore extension}.

\subsection{A topological version of the bimodule of relative differentials}
	The following definition serves as a topological version of the Definition \ref{def1}.
\begin{definition}
	\label{topreldifdef}
	Let $R$ be a $\hat{\otimes}$-algebra, and suppose that $(A, \eta)$ is a $R$-$\hat{\otimes}$-algebra. Denote by \newline ${A / \overline{\text{Im}(\eta(R))} = \overline{A}}$ the $R$-$\hat{\otimes}$-bimodule quotient. Then we can define the \textit{(topological) bimodule of relative differential 1-forms} $\widehat{\Omega}^1_R(A) := A \otimes_R \overline{A}$. The elementary tensors are usually denoted by $a_0 \otimes \overline{a_1} = a_0 \text{d} a_1$.
	
	The structure of $A$-$\hat{\otimes}$-bimodule on $\widehat{\Omega}^1_R(A)$ is uniquely defined by the following relations:
	\[
	b \circ (a_0 \text{d}a_1) = ba_0 \text{d}a_1, (a_0 \text{d} a_1) \circ b = a_0 \text{d}(a_1b) - a_0 a_1 \text{d}b \quad \text{for every } a_0, a_1, b \in A.
	\]
\end{definition}
\textbf{Remark.} To avoid confusion with the algebraic bimodules of differential $1$-forms, here we use the notation $\widehat{\Omega}^1_R(A)$, unlike in \cite{PirkAM}.

We will need to recall the following propositions related to topological bimodule of relative differential $1$-forms.

\begin{theorem}{\cite[p. 99]{PirkAM}}
	For every $A$-$\hat{\otimes}$-bimodule $M$ and a continuous $R$-derivation $D : A \rightarrow M$ there exists a unique $A$-$\hat{\otimes}$-bimodule morphism $\varphi : \widehat{\Omega}^1_R(A) \rightarrow M$ such that the following diagram is commutative:
	\begin{equation}
		\begin{tikzcd}
			\widehat{\Omega}^1_R(A) \ar[r, dashrightarrow, "\exists! \, \varphi"] & M \\
			A \ar[u, "d_A"] \ar[ur, "D"']
		\end{tikzcd},
	\end{equation}
	where $d_A(a) = da$.
\end{theorem}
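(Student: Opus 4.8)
The plan is to transplant the proof of the algebraic universal property (Proposition \ref{reldifuniv}) to the $\hat{\otimes}$-setting, the only genuinely new ingredient being the systematic use of continuity together with the universal property of the completed projective tensor product $\hat{\otimes}_R$. As in the algebraic case the value of $\varphi$ is forced: if $\varphi$ is to be an $A$-$\hat{\otimes}$-bimodule morphism with $\varphi \circ d_A = D$, then on elementary tensors one is compelled to have
\[
\varphi(a_0 \,\text{d} a_1) = \varphi(a_0 \circ d_A(a_1)) = a_0 \circ \varphi(d_A(a_1)) = a_0 \circ D(a_1).
\]
This simultaneously pins $\varphi$ down on the dense linear span of elementary tensors (which will immediately yield uniqueness) and prescribes what to define.

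First I would check that $\overline{a_1} \mapsto D(a_1)$ descends to the quotient $\overline{A} = A/\overline{\text{Im}(\eta(R))}$. Since $D$ is a continuous $R$-derivation it satisfies $D(1) = 0$ and is $R$-linear, so $D(\eta(r)) = \eta(r) \circ D(1) = 0$ for all $r \in R$; hence $D$ annihilates $\text{Im}(\eta(R))$, and by continuity it annihilates the closure $\overline{\text{Im}(\eta(R))}$ as well. Next I would verify that the bilinear map $A \times \overline{A} \to M$, $(a_0, \overline{a_1}) \mapsto a_0 \circ D(a_1)$, is $R$-balanced: the Leibniz rule together with $D(\eta(r)) = 0$ gives $D(\eta(r) a_1) = \eta(r) \circ D(a_1)$, whence
\[
(a_0 \eta(r)) \circ D(a_1) = a_0 \circ (\eta(r) \circ D(a_1)) = a_0 \circ D(\eta(r) a_1),
\]
which is exactly the balancing identity. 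Joint continuity of this map follows from the joint continuity of the $A$-module action on $M$ and the continuity of $D$. The universal property of $\hat{\otimes}_R$ (see \cite[ch. 2.3-2.4]{Helem1986}) then produces a unique continuous linear map $\varphi : \widehat{\Omega}^1_R(A) = A \hat{\otimes}_R \overline{A} \to M$ with $\varphi(a_0 \,\text{d} a_1) = a_0 \circ D(a_1)$.

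It then remains to confirm that $\varphi$ respects the $A$-bimodule structure and closes the triangle. The left action is immediate from the defining relation: $\varphi(b \circ (a_0 \,\text{d} a_1)) = \varphi((ba_0)\,\text{d} a_1) = (ba_0) \circ D(a_1) = b \circ \varphi(a_0 \,\text{d} a_1)$. For the right action I would use $(a_0 \,\text{d} a_1) \circ b = a_0 \,\text{d}(a_1 b) - a_0 a_1 \,\text{d} b$ and the Leibniz rule $D(a_1 b) = D(a_1) \circ b + a_1 \circ D(b)$; the two occurrences of $(a_0 a_1) \circ D(b)$ then cancel, leaving $\varphi((a_0 \,\text{d} a_1) \circ b) = (a_0 \circ D(a_1)) \circ b = \varphi(a_0 \,\text{d} a_1) \circ b$. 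Finally $\varphi(d_A(a)) = \varphi(1 \,\text{d} a) = 1 \circ D(a) = D(a)$, so the diagram commutes. Since every identity above is continuous in its arguments and holds on the dense span of elementary tensors, all extend to the whole of $\widehat{\Omega}^1_R(A)$.

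The only real departure from the purely algebraic argument is topological, and I expect it to be the main (though essentially routine) obstacle: one must ensure that the bilinear map defining $\varphi$ is \emph{jointly} continuous, so that the universal property of the completed projective tensor product genuinely applies, and one must check that passing to the \emph{closure} $\overline{\text{Im}(\eta(R))}$ in forming $\overline{A}$ loses nothing. Both points are settled by the continuity of $D$ and by the joint continuity built into the definition of an $A$-$\hat{\otimes}$-module; the algebraic skeleton is otherwise identical to that of Proposition \ref{reldifuniv}.
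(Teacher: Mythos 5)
The paper itself offers no proof of this statement—it is quoted from \cite[p.~99]{PirkAM}—and your argument is precisely the standard one given there: the topological transplant of Cuntz--Quillen's Proposition~\ref{reldifuniv}, with $\varphi$ forced on elementary tensors by $\varphi(a_0\,\mathrm{d}a_1)=a_0\circ D(a_1)$ and then produced via the universal property of $\hat{\otimes}_R$. Your treatment is correct, including the two genuinely topological points (joint continuity of the balanced map so that the completed projective tensor product's universal property applies, and descent through the closed subspace $\overline{\mathrm{Im}(\eta(R))}$ using continuity of $D$ and openness of the quotient map), with uniqueness following from density of the span of elementary tensors.
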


\begin{proposition}{\cite[Proposition 7.2]{PirkAM}}
	\label{splits}
	The short exact sequence
	\[
	\begin{tikzcd}
	0 \ar[r] & \widehat{\Omega}^1_R(A) \ar[r, "j"] & A \hat{\otimes}_R A \ar[r, "m"] & A \ar[r] & 0,
	\end{tikzcd}
	\]
	where $j(a_0 d a_1) = a_0 \otimes a_1  - a_0 a_1 \otimes 1$ and $m(a_0 \times a_1) = a_0 a_1$, splits in the categories $A$-\textbf{mod}-$R$ and $R$-\textbf{mod}-$A$.
\end{proposition}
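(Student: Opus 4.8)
The plan is to split the sequence by exhibiting an explicit continuous one-sided section of the multiplication map $m$. Recall that in any additive category producing a morphism $s$ with $m \circ s = \text{Id}_A$ is enough to split a short exact sequence $0 \to \widehat{\Omega}^1_R(A) \xrightarrow{j} A \hat{\otimes}_R A \xrightarrow{m} A \to 0$: the endomorphism $\text{Id} - s \circ m$ of $A \hat{\otimes}_R A$ is then an idempotent with image $\ker m = \operatorname{Im} j$, which yields a retraction of $j$ onto $\widehat{\Omega}^1_R(A)$. So the whole argument reduces to finding the correct section for each of the two categories and checking that it is a morphism there.

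For the splitting in $A$-\textbf{mod}-$R$ I would take $s \colon A \to A \hat{\otimes}_R A$, $s(a) = a \otimes 1$. It is continuous because it factors as the right-handed analogue of the canonical isomorphism $A \xrightarrow{\sim} A \hat{\otimes}_R R$ of \eqref{a simple tensor product} followed by $\text{Id}_A \hat{\otimes}_R \eta$, both continuous. One checks immediately that $m(s(a)) = a$, and left $A$-linearity is clear since the left $A$-action sits on the first tensor factor. The step that actually uses the structure of the completed tensor product is right $R$-linearity: here $s(a \cdot r) = a\eta(r) \otimes 1$ while $s(a) \cdot r = a \otimes \eta(r)$, and these coincide precisely by the $R$-balancing relation $a\eta(r) \otimes 1 = a \otimes \eta(r)$ valid in $A \hat{\otimes}_R A$.

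For the splitting in $R$-\textbf{mod}-$A$ I would run the mirror-image argument with $s' \colon A \to A \hat{\otimes}_R A$, $s'(a) = 1 \otimes a$, continuous for the same reason, again satisfying $m \circ s' = \text{Id}_A$, right $A$-linear on the second factor, and left $R$-linear by the balancing identity $1 \otimes \eta(r) a = \eta(r) \otimes a$. Exactness of the sequence is inherited from the algebraic statement of Proposition \ref{reldifex}, and once a one-sided section exists the sequence is in particular admissible, so the constructions above genuinely split it in each category.

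The main (and essentially only) obstacle is verification rather than depth: one must confirm that the naive algebraic sections really are morphisms of the relevant one-sided $\hat{\otimes}$-module categories, i.e. that they are continuous and respect the correct side of the action. Continuity is dispatched by realizing the sections through the universal property of $\hat{\otimes}_R$, and the module compatibility rests entirely on the $R$-balancing identity built into the completed tensor product. No separate proof of injectivity of $j$ is required, since a section of $m$ automatically furnishes a retraction of $j$ and thereby identifies $\widehat{\Omega}^1_R(A)$ as a complemented subbimodule of $A \hat{\otimes}_R A$ in both $A$-\textbf{mod}-$R$ and $R$-\textbf{mod}-$A$.
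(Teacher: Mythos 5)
Your choice of sections is correct and is exactly the standard one: $s(a)=a\otimes 1$ for the splitting in $A$-\textbf{mod}-$R$ and $s'(a)=1\otimes a$ for $R$-\textbf{mod}-$A$, with continuity via the canonical isomorphism \eqref{a simple tensor product} and one-sided linearity via the balancing relation. (The paper itself gives no proof here -- it cites \cite[Proposition 7.2]{PirkAM} -- and your sections agree with the ones used there.) However, there is a genuine gap in how you convert a section of $m$ into a retraction of $j$. Your reduction assumes that $j$ is a \emph{topological} isomorphism onto $\ker m$, i.e.\ the exactness and admissibility of the topological sequence, and you justify this by saying exactness ``is inherited from the algebraic statement of Proposition \ref{reldifex}.'' That inheritance is not available: $\widehat{\Omega}^1_R(A)=A\hat{\otimes}_R\overline{A}$ is built from the \emph{completed} quotient $\overline{A}=A/\overline{\mathrm{Im}(\eta(R))}$, and neither $(-)\hat{\otimes}_R(-)$ nor passage to completions is exact, so injectivity of the algebraic $j$ says nothing about the topological $j$; moreover, even algebraic bijectivity of $j$ onto $\ker m$ would not give the openness onto its image that a retraction in $A$-\textbf{mod}-$R$ requires. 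In the topological categories the identification of $\ker m$ with $\widehat{\Omega}^1_R(A)$ is precisely the nontrivial content, not a formality.

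The repair is cheap: exhibit the retractions of $j$ explicitly, which proves exactness, admissibility, and the splitting in one stroke. Let $q\colon A\rightarrow\overline{A}$ be the quotient map and set $\rho=\mathrm{Id}_A\hat{\otimes}q\colon A\hat{\otimes}_R A\rightarrow A\hat{\otimes}_R\overline{A}=\widehat{\Omega}^1_R(A)$; it is continuous, left $A$-linear and right $R$-linear (the right $R$-action on $\widehat{\Omega}^1_R(A)$ is via $\overline{A}$ because $dr=0$ for $r\in R$). On elementary tensors one checks $\rho\circ j=\mathrm{Id}$ (using $\overline{1}=0$) and $j\circ\rho+s\circ m=\mathrm{Id}$, since $j(\rho(a\otimes b))+s(m(a\otimes b))=(a\otimes b-ab\otimes 1)+ab\otimes 1=a\otimes b$; by density and continuity these hold everywhere, giving the direct sum decomposition in $A$-\textbf{mod}-$R$. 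For $R$-\textbf{mod}-$A$ take instead $\tilde{\rho}(a\otimes b)=a\,db-d(ab)$: it is left $R$-linear because $d(\eta(r)x)=\eta(r)\,dx$, right $A$-linear by the bimodule relation $(x\,dy)\circ c=x\,d(yc)-xy\,dc$, and satisfies $\tilde{\rho}\circ j=\mathrm{Id}$ and $j\circ\tilde{\rho}+s'\circ m=\mathrm{Id}$ by the same elementary-tensor computation. With these maps added, your argument becomes complete; without them, the step ``$\mathrm{Id}-s\circ m$ has image $\operatorname{Im}j$, hence yields a retraction of $j$'' is unjustified in the $\hat{\otimes}$-setting.
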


\subsection{Holomorphic Ore extensions}

To give the definition of the holomorphic Ore extension of a $\hat{\otimes}$-algebra, associated with an endomorphism and derivation, we need to recall the definition of localizable morphisms.

\begin{definition}{\cite[Definition 4.1]{PirkAM}}
	Let $X$ be a LCS and consider a family $\mathcal{F} \subset \mathcal{L}(X)$ of continuous linear maps $X \rightarrow X$.
	
	Then a seminorm $\left\| \cdot \right\|$ on $X$ is called $\mathcal{F}$-\textit{stable} if for every $T \in \mathcal{F}$ there exists a constant $C_T > 0$, such that
	\[
	\left\| Tx \right\| \le C_T \left\| x \right\| \quad \text{for every } a \in A.
	\] 
\end{definition}

\begin{definition}
	\indent
	\begin{enumerate}[label=(\arabic*)]
		\item Let $X$ be a LCS.
		
		A family of continuous linear operators $\mathcal{F} \subset L(X)$ is called \textit{localizable}, if the topology on $X$ can be defined by a family of $\mathcal{F}$-stable seminorms.
		
		\item Let $A$ be an Arens-Michael algebra.
		
		A family of continuous linear operators $\mathcal{F} \subset L(A)$ is called $m$-\textit{localizable}, if the topology on $A$ can be defined by a family of submultiplicative $\mathcal{F}$-stable seminorms.
	\end{enumerate}
\end{definition}


Now we will state the theorem which proves the existence of certain $\hat{\otimes}$-algebras which would be reasonable to call the ``holomorphic Ore extensions''.

\begin{theorem}{\cite[Section 4.1]{PirkAM}}
	\label{holomoreex1}
	Let $A$ be a $\hat{\otimes}$-algebra and suppose that $\alpha : A \rightarrow A$ is a localizable endomorphism of $A$, $\delta : A \rightarrow A$ is a localizable $\alpha$-derivation of $A$.
	
	Then there exists a unique multiplication on the tensor product $A \hat{\otimes} \mathcal{O}(\mathbb{C})$, such that the following conditions are satisfied:
	\begin{enumerate}[label=(\arabic*)]
		\item The resulting algebra, which is denoted by $\mathcal{O}(\mathbb{C}, A; \alpha, \delta)$, is an $A$-$\hat{\otimes}$-algebra.
		\item The natural inclusion
		\[
		A[z; \alpha, \delta] \hookrightarrow \mathcal{O}(\mathbb{C}, A; \alpha, \delta)
		\]
		is an algebra homomorphism.
		\item For every Arens-Michael $A$-$\hat{\otimes}$-algebra $B$ the following natural isomorphism takes place:
		\[
		\text{Hom}(A[z; \alpha, \delta], B) \cong \text{Hom}_{A\text{-alg}}(\mathcal{O}(\mathbb{C}, A; \alpha, \delta), B).
		\]
	\end{enumerate}
	Moreover, let $\alpha$ be invertible, and suppose that the pair $(\alpha, \alpha^{-1})$ is localizable. Then there exists a unique multiplication on the tensor product $A \hat{\otimes} \mathcal{O}(\mathbb{C}^\times)$, such that the following conditions are satisfied:
	\begin{enumerate}[label=(\arabic*)]
		\item The resulting algebra, which is denoted by $\mathcal{O}(\mathbb{C}^\times, A; \alpha)$, is a $\hat{\otimes}$-algebra.
		\item The natural inclusion
		\[
		A[z; \alpha, \alpha^{-1}] \hookrightarrow \mathcal{O}(\mathbb{C}^\times, A; \alpha)
		\]
		is an algebra homomorphism.
		\item For every Arens-Michael $A$-$\hat{\otimes}$-algebra $B$ the following natural isomorphism takes place:
		\[
		\text{Hom}(A[z; \alpha, \alpha^{-1}], B) \cong \text{Hom}_{A\text{-alg}}(\mathcal{O}(\mathbb{C}^\times, A; \alpha), B).
		\]
	\end{enumerate}
	
	And if we replace the word ``localizable'' with ``$m$-localizable'' in this theorem, then the resulting algebras will become Arens-Michael algebras.
\end{theorem}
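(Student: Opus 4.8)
The plan is to realize $\mathcal{O}(\mathbb{C}, A; \alpha, \delta)$ as a completion of the algebraic Ore extension $A[z;\alpha,\delta]$ and to extend the latter's multiplication by continuity, the localizability hypothesis being exactly what makes this extension jointly continuous. First I would identify the underlying space: since $\mathcal{O}(\mathbb{C})$ is a nuclear power series space of infinite type, whose topology is given by the seminorms $\|f\|_\rho = \sum_n |c_n|\rho^n$ ($\rho>0$) on $f=\sum_n c_n z^n$, the projective tensor product $A\hat{\otimes}\mathcal{O}(\mathbb{C})$ is identified with the space of ``$A$-valued entire functions'' $u=\sum_{n\ge 0} a_n z^n$ ($a_n\in A$) for which $\sum_n p(a_n)\rho^n<\infty$ for every continuous seminorm $p$ on $A$ and every $\rho>0$, with topology defined by the seminorms $\|u\|_{p,\rho}=\sum_n p(a_n)\rho^n$. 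The polynomial subalgebra $A[z;\alpha,\delta]$ is dense, and the structure map $A\to A\hat{\otimes}\mathcal{O}(\mathbb{C})$, $a\mapsto a$, is continuous.

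The heart of the argument is a continuity estimate for the algebraic multiplication. Normal-ordering gives, for every $m$ and $a\in A$,
\[
z^m a=\sum_{k=0}^m f_m^k(a)\,z^k,\qquad f_m^k=\sum_{\substack{w\in\{\alpha,\delta\}^m\\ w\text{ has }k\text{ factors }\alpha}} w,
\]
where the recursion $f_{m+1}^k=\alpha\circ f_m^{k-1}+\delta\circ f_m^k$ follows by induction from $za=\alpha(a)z+\delta(a)$; thus $f_m^k$ is the sum of the $\binom{m}{k}$ compositions of length $m$ in $\alpha$ and $\delta$ having exactly $k$ factors $\alpha$. Choosing the defining seminorms of $A$ to be $\{\alpha,\delta\}$-stable (possible by localizability), for such a seminorm there are constants $C_\alpha,C_\delta>0$ with $\|\alpha(a)\|\le C_\alpha\|a\|$ and $\|\delta(a)\|\le C_\delta\|a\|$, whence $\|f_m^k(a)\|\le\binom{m}{k}C_\alpha^k C_\delta^{m-k}\|a\|$. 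Multiplying $u=\sum_m a_m z^m$ by $v=\sum_n b_n z^n$, collecting powers of $z$, and using joint continuity of the multiplication of $A$ to supply a single stable seminorm $s$ with $p(ab)\le s(a)s(b)$ (submultiplicativity of $\|\cdot\|$ in the $m$-localizable case), together with the binomial identity $\sum_k\binom{m}{k}(C_\alpha\rho)^k C_\delta^{m-k}=(C_\alpha\rho+C_\delta)^m$, one obtains
\[
\|uv\|_{p,\rho}\le \|u\|_{s,\,C_\alpha\rho+C_\delta}\,\|v\|_{s,\rho}.
\]
Hence the multiplication is jointly continuous on the dense subalgebra and extends uniquely to $A\hat{\otimes}\mathcal{O}(\mathbb{C})$; associativity, unitality, and the fact that the inclusions of $A$ and of $\mathcal{O}(\mathbb{C})$ are homomorphisms all pass to the completion by density, giving conditions (1) and (2).

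Uniqueness is then immediate: two jointly continuous multiplications agreeing on the dense subalgebra $A[z;\alpha,\delta]$ coincide. For the universal property (3), note that for an Arens--Michael $A$-$\hat{\otimes}$-algebra $B$ with structure map $\eta_B$, a continuous $A$-algebra homomorphism out of $A[z;\alpha,\delta]$ amounts to an element $b\in B$ with $b\,\eta_B(a)=\eta_B(\alpha(a))b+\eta_B(\delta(a))$. I would set $\tilde\varphi\bigl(\sum_n a_n z^n\bigr)=\sum_n\eta_B(a_n)\,b^n$. Convergence holds precisely because $B$ is Arens--Michael: for a submultiplicative seminorm $\|\cdot\|_B$ one has $\|\eta_B(a_n)b^n\|_B\le\|\eta_B(a_n)\|_B\,\|b\|_B^{\,n}\le C\,p(a_n)\,\|b\|_B^{\,n}$ for a suitable seminorm $p$ on $A$, and $\sum_n p(a_n)\|b\|_B^{\,n}<\infty$ by the entire decay of the coefficients. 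This $\tilde\varphi$ is the unique continuous homomorphism extending the given one, which yields the asserted natural bijection.

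The Laurent case runs along the same lines with $\delta=0$: here $\mathcal{O}(\mathbb{C}^\times)$ consists of two-sided series $\sum_{n\in\mathbb{Z}}c_n z^n$ with $\sum_n|c_n|\rho^{|n|}<\infty$, normal-ordering degenerates to $z^m a=\alpha^m(a)z^m$ for all $m\in\mathbb{Z}$, and the localizability of the pair $(\alpha,\alpha^{-1})$ furnishes stability constants with $\|\alpha^m(a)\|\le C_{\pm}^{|m|}\|a\|$, giving the analogous two-sided continuity estimate. Finally, in the $m$-localizable setting the same computation carried out with submultiplicative stable seminorms, together with the cofinality of continuous submultiplicative seminorms, shows that the topology of the extension can be defined by submultiplicative seminorms, so the extensions are Arens--Michael (equivalently, this identifies them with the relative Arens--Michael envelope characterized by (3), which is Arens--Michael by construction); see \cite[Section 4.1]{PirkAM} for this refinement. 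The main obstacle is the continuity estimate of the second paragraph: everything hinges on the normal-ordering formula and the bound $(C_\alpha\rho+C_\delta)^m$, and it is exactly here that the localizability hypothesis is indispensable.
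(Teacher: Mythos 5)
The paper does not actually prove this theorem --- it is quoted from \cite[Section 4.1]{PirkAM} --- so your proposal can only be compared with Pirkovskii's construction, and for the purely localizable part it is indeed a faithful reconstruction of it: the identification $A \hat{\otimes} \mathcal{O}(\mathbb{C}) \cong \{ \sum_n a_n z^n : \sum_n p(a_n)\rho^n < \infty \}$, the normal-ordering operators $f_m^k$ with the recursion $f_{m+1}^k = \alpha \circ f_m^{k-1} + \delta \circ f_m^k$, the bound $\| f_m^k(a) \| \le \binom{m}{k} C_\alpha^k C_\delta^{m-k} \| a \|$ via $\mathcal{F}$-stable seminorms, the resulting estimate $\| uv \|_{p,\rho} \le \| u \|_{s,\, C_\alpha\rho + C_\delta} \| v \|_{s,\rho}$, extension by density, uniqueness, and the verification of the universal property (3) by summing $\sum_n \eta_B(a_n) b^n$ against submultiplicative seminorms of the Arens--Michael target are all correct, as is the two-sided variant for $\mathcal{O}(\mathbb{C}^\times, A; \alpha)$.

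There is, however, a genuine gap in your treatment of the final assertion ($m$-localizable $\Rightarrow$ Arens--Michael). Your claim that ``the same computation carried out with submultiplicative stable seminorms \dots shows that the topology of the extension can be defined by submultiplicative seminorms'' is not justified by the estimate you actually prove: submultiplicativity of $\| \cdot \|_{p,\rho}$ itself would require $(C_\alpha \rho + C_\delta)^m \le \rho^m$, i.e.\ $C_\alpha \rho + C_\delta \le \rho$, which is impossible unless $C_\alpha < 1$ (already the diagonal term $k = m$ contributes $C_\alpha^m \rho^m$, and the stability constants of a submultiplicative stable system cannot in general be normalized below $1$). Nor can one infer $m$-convexity from the shape of your estimate, in which each seminorm of a product is dominated by a product of \emph{higher} seminorms: Arens' algebra $L^\omega = \bigcap_{p \ge 1} L^p[0,1]$ satisfies $\| fg \|_p \le \| f \|_{2p} \| g \|_{2p}$ and is a Fr\'echet algebra with jointly continuous multiplication that is \emph{not} locally $m$-convex, so an additional idea is indispensable here.

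Your parenthetical fallback --- that the universal property (3) ``identifies'' the algebra with the relative Arens--Michael envelope, ``which is Arens--Michael by construction'' --- is circular. The universal property determines an object uniquely only within the category of Arens--Michael $A$-algebras: if $X$ corepresents $B \mapsto \mathrm{Hom}(A[z;\alpha,\delta], B)$ on that category without being known to lie in it, one obtains a canonical morphism $X \rightarrow E$ to the envelope $E$ (by evaluating at $B = E$), but no inverse morphism $E \rightarrow X$, since one cannot substitute $B = X$. So concluding that $\mathcal{O}(\mathbb{C}, A; \alpha, \delta)$ is Arens--Michael presupposes exactly what is to be proved. This last part of the theorem therefore requires a separate, genuinely different argument producing submultiplicative seminorms on the completed extension, for which one must consult \cite[Section 4.1]{PirkAM}; everything before it stands.
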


By considering $A_{\text{op}}[z; \alpha, \delta]$, we can formulate a version of the Theorem \ref{holomoreex1} for opposite Ore extensions, the proof is basically the same in this case. In fact, we need this to prove the following corollary:

%

\begin{corollary}
	\label{homolproperties}
	Let $R$ be a $\hat{\otimes}$-algebra with an endomorphism $\alpha : R \rightarrow R$ and a $\alpha$-derivation $\delta$ such that the pair $(\alpha, \delta)$ is localizable. 
	\begin{enumerate}[label=(\arabic*)]
		\item If $A = \mathcal{O}(\mathbb{C}, R; \alpha, \delta)$, then $A$ is free as a left $R$-$\hat{\otimes}$-module.
		\item If $\alpha$ is invertible, and $(\alpha, \alpha^{-1})$ is a localizable pair, then $A = \mathcal{O}(\mathbb{C}^\times, R; \alpha)$ is free as a left and right $R$-$\hat{\otimes}$-module.
		\item If $\alpha$ is invertible, and the pair $(\alpha, \delta)$ is $m$-localizable, then $A = \mathcal{O}(\mathbb{C}, R; \alpha, \delta)$ is free as a left and right $R$-$\hat{\otimes}$-module.
	\end{enumerate}
\end{corollary}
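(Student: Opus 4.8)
The plan is to reduce all three statements to one elementary observation: in any Ore extension, left multiplication by the base algebra is \emph{untwisted}, so the base acts freely on the left, and the twisting enters only through right multiplication. Freeness on the right is then recovered by passing to the opposite holomorphic Ore extension, where the two roles are interchanged.

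First I would dispose of the left-module statements, which settle (1) and half of (2) and (3). By Theorem \ref{holomoreex1} the underlying LCS of $A = \mathcal{O}(\mathbb{C}, R; \alpha, \delta)$ (resp. $\mathcal{O}(\mathbb{C}^\times, R; \alpha)$) is $R \hat{\otimes} \mathcal{O}(\mathbb{C})$ (resp. $R \hat{\otimes} \mathcal{O}(\mathbb{C}^\times)$), and the inclusion $R[z; \alpha, \delta] \hookrightarrow A$ is an algebra homomorphism. On the dense polynomial subalgebra, left multiplication acts by $r \cdot \sum_i r_i z^i = \sum_i (r r_i) z^i$: it only affects coefficients and never moves $z$ past an element of $R$, so it is the identity on the $\mathcal{O}(\mathbb{C})$-factor and ordinary left multiplication on the $R$-factor. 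By joint continuity of the multiplication together with density of $R[z; \alpha, \delta]$, this extends to a topological isomorphism of left $R$-$\hat{\otimes}$-modules $A \cong R \hat{\otimes} \mathcal{O}(\mathbb{C})$, with $E = \mathcal{O}(\mathbb{C}) \in \textbf{Fr}$, which is free by definition.

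For the right-module statements I would invoke the opposite version of Theorem \ref{holomoreex1} announced in the remark following it: under the appropriate localizability hypotheses the opposite holomorphic Ore extension $\mathcal{O}_{\text{op}}(\mathbb{C}, R; \tilde{\alpha}, \tilde{\delta})$ exists, its underlying space is $\mathcal{O}(\mathbb{C}) \hat{\otimes} R$, and by the mirror image of the previous paragraph right multiplication there is untwisted, so it is free as a right $R$-$\hat{\otimes}$-module. It then suffices to promote the algebraic Ore isomorphisms \eqref{Ore isomorphisms}, namely $R[z; \alpha, \delta] \cong R_{\text{op}}[z; \alpha^{-1}, -\delta \alpha^{-1}]$ and $R[z, z^{-1}; \alpha] \cong R_{\text{op}}[z, z^{-1}; \alpha^{-1}]$ (valid precisely because $\alpha$ is invertible), to topological isomorphisms of the completions. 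Both sides share the same polynomial Ore extension as a dense subalgebra, so in case (3) the universal property of Theorem \ref{holomoreex1}(3), applied in both directions, yields mutually inverse continuous algebra homomorphisms between the completions; transporting the free right-module structure across the resulting isomorphism $A \cong \mathcal{O}_{\text{op}}(\mathbb{C}, R; \alpha^{-1}, -\delta \alpha^{-1})$ finishes the argument.

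The main obstacle is exactly this last step — upgrading the purely algebraic isomorphisms \eqref{Ore isomorphisms} to \emph{topological} isomorphisms of the holomorphic completions — and two points require care. First, one must check that the opposite data $(\alpha^{-1}, -\delta \alpha^{-1})$ (resp. $\alpha^{-1}$) again satisfies the localizability condition needed for the opposite extension to exist, which is where the invertibility of $\alpha$ is used. Second, the universal property of Theorem \ref{holomoreex1}(3) is stated only for Arens--Michael targets, so running it in both directions requires \emph{both} completions to be Arens--Michael; this is guaranteed precisely by the $m$-localizability hypothesis in (3). For the Laurent case (2), where only localizability is assumed and the completions are merely $\hat{\otimes}$-algebras, I would instead argue directly, verifying that the change-of-order map $\sum_i r_i z^i \mapsto \sum_i z^i r_i$ is bounded with respect to each of the $\mathcal{F}$-stable seminorms defining the two completions and hence extends to the required topological isomorphism of right $R$-$\hat{\otimes}$-modules.
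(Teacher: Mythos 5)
Your proposal follows essentially the same route as the paper's proof: left freeness comes from the underlying isomorphism $A \cong R \hat{\otimes} \mathcal{O}(\mathbb{C})$, right freeness in (3) from upgrading the algebraic isomorphisms \eqref{Ore isomorphisms} to a topological isomorphism with the opposite holomorphic extension via the universal property applied in both directions (and you correctly isolate $m$-localizability as what makes both completions Arens--Michael, exactly the paper's closing remark), while in (2) the paper does your ``direct'' argument by citing \cite[Lemma 4.12]{PirkAM} and exhibiting the explicit continuous inverse $\gamma(r \otimes z^n) = z^n \otimes \alpha^{-n}(r)$. One small caution: the re-ordering map must carry the twist, $r \otimes z^n \mapsto z^n \otimes \alpha^{-n}(r)$, not the literal factor swap $\sum_i r_i z^i \mapsto \sum_i z^i r_i$ as written in your last paragraph; the localizability of the pair $(\alpha, \alpha^{-1})$ is precisely what bounds $\left\| \alpha^{-n}(r) \right\|$ and makes this twisted map continuous.
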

\begin{proof}
	\begin{enumerate}
		\item This already follows from the fact that $\mathcal{O}(\mathbb{C}, R; \alpha, \delta)$ is isomorphic to $A \hat{\otimes} \mathcal{O}(\mathbb{C})$.
		\item This is the immediate corollary of the \cite[Lemma 4.12]{PirkAM}: just notice that we can define
		\[
		\gamma : R \hat{\otimes} \mathcal{O}(\mathbb{C}) \rightarrow \mathcal{O}(\mathbb{C}) \hat{\otimes} R, \quad \gamma(r \otimes z^n) = z^n \otimes \alpha^{-n}(r),
		\]
		and it will be a continuous inverse of $\tau$. To finish the proof, we only need to notice that $\tau$ and $\gamma$ are isomorphisms of left and right $R$-$\hat{\otimes}$-modules, respectively.
		\item This follows from the above remark: let us denote the resulting ``opposite'' holomorphic Ore extensions by $\mathcal{O}_{\text{op}}(\mathbb{C}^\times, R; \alpha, \delta)$.
		
		Then the isomorpisms \ref{Ore isomorphisms} can be extended via the universal properties to the topological isomorphisms of $R$-$\hat{\otimes}$-algebras as follows: consider the algebra homomorphisms
		\[
		A[t; \alpha, \delta] \xrightarrow{\sim} A[t; \alpha^{-1}, -\delta \alpha^{-1}] \hookrightarrow \mathcal{O}_{\text{op}}(\mathbb{C}^\times, R; \alpha^{-1}, -\delta \alpha^{-1}),
		\]
		\[
		A[t; \alpha^{-1}, -\delta \alpha^{-1}] \xrightarrow{\sim} A[t; \alpha, \delta] \hookrightarrow \mathcal{O}(\mathbb{C}, R; \alpha, \delta).
		\]
		Now notice that the extensions are continuous and inverse on dense subsets of the holomorphic Ore extensions, therefore, they are actually inverse to each other.	However, the opposite Ore extensions are free as right $R$-$\hat{\otimes}$-modules by definition.
		
		Notice that to apply the universal properties we need the the morphisms to be \textit{m}-localizable, so that the holomorphic Ore extensions are Arens-Michael algebras.
	\end{enumerate}
\end{proof}

In this paper we compute the topological bimodules of relative differential 1-forms of holomorphic Ore extensions and smooth crossed products by $\mathbb{Z}$, see Propositions \ref{holomorphic Ore extension} and \ref{smooth extensions}.

\subsection{Upper estimates for the bidimension}
\begin{theorem}
	\label{upper estimates for bidimension holomorphic}
	Suppose that $R$ is a $\hat{\otimes}$-algebra, and $A$ is one of the two $\hat{\otimes}$-algebras:
	\begin{enumerate}[label=(\arabic*)]
		\item $A = \mathcal{O}(\mathbb{C}, R; \alpha, \delta)$, where the pair $\{\alpha, \delta\}$  is localizable.
		\item $A = \mathcal{O}(\mathbb{C}^\times, R; \alpha)$, where the pair $\{\alpha, \alpha^{-1}\}$ is localizable.
	\end{enumerate}
	Then we have 
	\[
	\text{db}(A^{\text{op}}) \le \text{db}(R^{\text{op}}) + 1.
	\]
\end{theorem}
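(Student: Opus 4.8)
The plan is to imitate the algebraic bidimension estimate of \cite{NonComm2001}, replacing $\otimes$ by $\hat{\otimes}$ and projective resolutions by admissible ones. Write $A^e := A \hat{\otimes} A^{\text{op}}$ and $R^e := R \hat{\otimes} R^{\text{op}}$. Since an $A^{\text{op}}$-$\hat{\otimes}$-bimodule is the same as a right $A^e$-$\hat{\otimes}$-module, one has $\text{db}(A^{\text{op}}) = \text{dh}_{(A^e)^{\text{op}}}(A^{\text{op}})$, which is exactly the shape to which Proposition \ref{free} applies. If $\text{db}(R^{\text{op}}) = \infty$ there is nothing to prove, so I would assume it finite and fix a projective resolution of $R^{\text{op}}$ in $R^{\text{op}}\text{-}\textbf{mod}\text{-}R^{\text{op}}$ of length $\text{db}(R^{\text{op}})$.

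The first move is to feed in the relative differentials. By Proposition \ref{splits}, applied to the $R^{\text{op}}$-$\hat{\otimes}$-algebra $A^{\text{op}}$, the sequence
\[
0 \to \widehat{\Omega}^1_{R^{\text{op}}}(A^{\text{op}}) \to A^{\text{op}} \hat{\otimes}_{R^{\text{op}}} A^{\text{op}} \to A^{\text{op}} \to 0
\]
of $A^{\text{op}}$-bimodules splits in $A^{\text{op}}\text{-}\textbf{mod}\text{-}R^{\text{op}}$, hence in $\textbf{LCS}$, so it is admissible. Subadditivity (Theorem \ref{subadditivity}) then yields
\[
\text{db}(A^{\text{op}}) \le \max\{\, \text{dh}_{(A^e)^{\text{op}}}(A^{\text{op}} \hat{\otimes}_{R^{\text{op}}} A^{\text{op}}),\ \text{dh}_{(A^e)^{\text{op}}}(\widehat{\Omega}^1_{R^{\text{op}}}(A^{\text{op}})) + 1 \,\},
\]
so it suffices to bound each of the two ``relatively induced'' bimodules on the right by $\text{db}(R^{\text{op}})$. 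Using the appendix computation (Proposition \ref{holomorphic Ore extension}), both bimodules are of the form $A^{\text{op}}_{\beta} \hat{\otimes}_{R^{\text{op}}} A^{\text{op}}$ with $\beta = \text{id}$ and $\beta = \alpha$ respectively, and there is a canonical isomorphism $A^{\text{op}}_{\beta} \hat{\otimes}_{R^{\text{op}}} A^{\text{op}} \cong A^e \hat{\otimes}_{R^e} R^{\text{op}}_{\beta}$ of right $A^e$-modules.

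In case (2) I would finish cleanly. There $\alpha$ is an automorphism and, by Corollary \ref{homolproperties}(2), $A$ is free as a left and as a right $R$-$\hat{\otimes}$-module; hence $A^e$ is free, so in particular flat, as a one-sided $R^e$-$\hat{\otimes}$-module. Applying Proposition \ref{free} (whose terms are projective by Lemma \ref{lemma1}, and whose admissibility uses this flatness through Proposition \ref{free admissible}) to the resolution fixed above gives $\text{dh}_{(A^e)^{\text{op}}}(A^e \hat{\otimes}_{R^e} R^{\text{op}}_{\beta}) \le \text{dh}_{(R^e)^{\text{op}}}(R^{\text{op}}_{\beta})$, and Lemma \ref{auto} (since $\alpha \in \text{Aut}(R)$) removes the twist, so both terms are bounded by $\text{db}(R^{\text{op}})$ and the maximum is $\text{db}(R^{\text{op}}) + 1$. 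For case (1) I would factor the induction through the intermediate $\hat{\otimes}$-algebra $\mathcal{R} := R \hat{\otimes} A^{\text{op}}$: Corollary \ref{homolproperties}(1) gives only that $A$ is free as a \emph{left} $R$-module, but this already makes $A^e$ free as a left $\mathcal{R}$-module, with $A^e \cong \mathcal{R} \hat{\otimes}_R A$, so that the functor $(-)\hat{\otimes}_{\mathcal{R}} A^e \cong (-)\hat{\otimes}_R A$ — which needs only left freeness — carries projective resolutions to projective resolutions and preserves admissibility, reducing the estimate over $A^e$ to an estimate over $\mathcal{R}$.

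The main obstacle is precisely this asymmetry in case (1). When $\alpha$ is a non-invertible localizable endomorphism, $A$ is free only on one side, so the direct induction along $R^e \to A^e$ is unavailable (it would require $A^e$ to be flat as a \emph{two}-sided $R^e$-module), and Lemma \ref{lemma1}(2), being stated for $\alpha \in \text{Aut}(R)$, cannot be applied verbatim to the twisted differential bimodule. The delicate part of the write-up will therefore be to perform \emph{every} reduction using only $(-)\hat{\otimes}_R A$ by routing through $\mathcal{R}$, and then to check that the residual homological dimension over $\mathcal{R}$ of the right $\mathcal{R}$-modules representing $A^{\text{op}} \hat{\otimes}_{R^{\text{op}}} A^{\text{op}}$ and its $\alpha$-twisted analogue is again at most $\text{db}(R^{\text{op}})$ — i.e. that peeling off the second tensor factor can once more be arranged to consume only the left freeness of $A$ over $R$, rather than a right-sided flatness one does not have. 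I expect this two-step, strictly one-sided bookkeeping, together with handling the twist by $\alpha$ without invertibility, to be the genuine difficulty; everything else is the formal machinery of Theorem \ref{subadditivity}, Proposition \ref{free} and Lemma \ref{lemma1}.
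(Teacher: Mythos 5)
Your case (2) argument is correct and takes a genuinely different route from the paper: you induce along $R^e \to A^e$ by a single application of Proposition \ref{free} (legitimate, because two-sided freeness of $A$ over $R$ from Corollary \ref{homolproperties}(2) makes $A^e$ free as a one-sided $R^e$-module) and then erase the twist with Lemma \ref{auto}; modulo routine left/right bookkeeping this works, and it is arguably slicker than the paper's hands-on construction. But this route genuinely consumes both right freeness and invertibility of $\alpha$, and in case (1) you stop at a plan: you set up the reduction through $\mathcal{R} = R \hat{\otimes} A^{\text{op}}$ and then explicitly defer the verification that the right $\mathcal{R}$-modules representing $A \hat{\otimes}_R A$ and its twisted analogue have projective dimension at most $\text{db}(R^{\text{op}})$, calling it ``the genuine difficulty''. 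That deferred check is the entire content of the theorem in case (1), so as written the proposal does not prove it. A second soft spot of the same origin: without $\alpha^{-1}$ you cannot move twists across $\hat{\otimes}_R$ (one has $M_\alpha \hat{\otimes}_R N \cong M \hat{\otimes}_R {}_{\alpha^{-1}}N$ only for invertible $\alpha$), so your identification of $\widehat{\Omega}^1_{R^{\text{op}}}(A^{\text{op}})$ with a \emph{left}-twisted bimodule $A^{\text{op}}_{\beta} \hat{\otimes}_{R^{\text{op}}} A^{\text{op}}$ is itself not automatic in case (1); this is one reason to prove the bound for $\text{db}(A)$ and pass to opposites at the very end, since $\text{db}$ is invariant under $B \mapsto B^{\text{op}}$ via $(B^{\text{op}})^e \cong (B^e)^{\text{op}}$ (the paper's own statement/proof silently uses this).

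The obstacle you anticipate is in fact absent, and the paper's proof shows how to do the ``strictly one-sided bookkeeping'' in one pass. Take a projective resolution $0 \leftarrow R \leftarrow P_0 \leftarrow \dots \leftarrow P_n \leftarrow 0$ in $R\text{-}\textbf{mod}\text{-}R$ with $n = \text{db}(R)$. Since projective bimodules are projective as one-sided modules (\cite[Corollary 3.1.18]{Helem1986}), this complex splits in $R\text{-}\textbf{mod}$ and in $\textbf{mod}\text{-}R$; consequently the functors $A \hat{\otimes}_R (-)$ and $A_\alpha \hat{\otimes}_R (-)$ preserve its admissibility with \emph{no} flatness or freeness hypothesis on $A$ at all --- this is exactly the missing bound $\text{dh}_{\mathcal{R}^{\text{op}}}(A) \le n$, $\text{dh}_{\mathcal{R}^{\text{op}}}(A_\alpha) \le n$ in your notation (the paper's complexes \eqref{projres} and \eqref{projres1}). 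A second application of $(-) \hat{\otimes}_R A$ preserves admissibility by Proposition \ref{free admissible}, using only the left freeness supplied by Corollary \ref{homolproperties}(1), and the terms $A_\alpha \hat{\otimes}_R P_i \hat{\otimes}_R A$ are projective $A$-$\hat{\otimes}$-bimodules by Lemma \ref{lemma1}(2). As for your worry that Lemma \ref{lemma1}(2) is stated for $\alpha \in \text{Aut}(R)$: its proof uses only the canonical isomorphism $A_\alpha \hat{\otimes}_R R \cong A_\alpha$ and the fact that $A_\alpha \cong A$ as a \emph{left} $A$-module (the twist lives entirely in the right $R$-action), so it is valid verbatim for an arbitrary endomorphism, and Lemma \ref{auto} is never needed. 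Applying Theorem \ref{subadditivity} to the split sequence $0 \to A_\alpha \hat{\otimes}_R A \to A \hat{\otimes}_R A \to A \to 0$ from Propositions \ref{splits} and \ref{holomorphic Ore extension} then gives $\text{db}(A) \le n+1$ in both cases at once, with case (2) requiring no separate treatment.
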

\begin{proof}
Due to the Proposition \ref{splits} and Proposition \ref{holomorphic Ore extension}, we have the following sequence of $A$-$\hat{\otimes}$-bimodules, which splits in the categories $R\text{-}\textbf{mod}\text{-}A$ and $A\text{-}\textbf{mod}\text{-}R$:
\begin{equation}
	\label{exact1}
	\begin{tikzcd}
		0 \ar[r] & A_\alpha \hat{\otimes}_R A \ar[r, "j"] & A \hat{\otimes}_R A \ar[r, "m"] & A \ar[r] & 0,
	\end{tikzcd}
\end{equation}
where 
$m$ is the multiplication operator. Let
\begin{equation}
\label{something}
0 \leftarrow R \leftarrow P_0 \leftarrow \dots \leftarrow P_n \leftarrow 0
\end{equation}
be a projective resolution of $R$ in $R\text{-}\textbf{mod}\text{-}R$. Notice that \eqref{something} splits in $R\text{-}\textbf{mod}$ and $\textbf{mod}\text{-}R$, because all objects in the resolution are projective as left and right $R$-$\hat{\otimes}$-modules (\cite[Corollary 3.1.18]{Helem1986}). Therefore, we can apply the functors $A_\alpha \hat{\otimes}_R (-)$ and $A \hat{\otimes}_R (-)$ to \eqref{something} and the resulting complexes of $A$-$R$-$\hat{\otimes}$-bimodules are still admissible:
\begin{equation}
\label{projres}
0 \leftarrow A \leftarrow  A \hat{\otimes}_R P_0  \leftarrow \dots \leftarrow A \hat{\otimes}_R P_n \leftarrow 0
\end{equation}
\begin{equation}
\label{projres1}
0 \leftarrow A_\alpha \leftarrow  A_\alpha \hat{\otimes}_R  P_0 \leftarrow \dots \leftarrow A_\alpha \hat{\otimes}_R  P_n \leftarrow 0.
\end{equation}
Recall that $A$ is a free left $R$-$\hat{\otimes}$-module due to the Corollary \ref{homolproperties}, so the functor $(-) \hat{\otimes}_R A$ preserves admissibility, due to the Proposition \ref{free admissible}, therefore the following complexes of $A$-$\hat{\otimes}$-bimodules are admissible:
\begin{equation}
\label{projres2}
0 \leftarrow A \hat{\otimes}_R A  \leftarrow A \hat{\otimes}_R P_0 \hat{\otimes}_R A \leftarrow \dots \leftarrow A \hat{\otimes}_R P_n \hat{\otimes}_R A \leftarrow 0
\end{equation}
\begin{equation}
\label{projres3}
0 \leftarrow A_\alpha \hat{\otimes}_R A  \leftarrow A_\alpha \hat{\otimes}_R P_0 \hat{\otimes}_R A \leftarrow \dots \leftarrow A_\alpha \hat{\otimes}_R P_n \hat{\otimes}_R A \leftarrow 0
\end{equation}
Lemma \ref{lemma1} implies that \eqref{projres2} and \eqref{projres3} define projective resolutions for $A \hat{\otimes}_R A$ and $A_\alpha \hat{\otimes}_R A$. Now we can apply Theorem \ref{subadditivity} to \eqref{exact1}, so we get
$$
\text{db(A)} = \text{dh}_{A^\text{e}}(A) \le \max\{ \text{dh}_{A^\text{e}}(A \hat{\otimes}_R A), \text{dh}_{A^\text{e}}(A_\alpha \hat{\otimes}_R A)+1 \} \le n+1.
$$

In other words, we have obtained the desired estimate $$\text{db}(A) \le \text{db}(R) + 1.$$
\end{proof}


\subsection{Upper estimates for the right global and weak global dimensions}
Now we are prepared to state the theorem:
\begin{theorem}
	\label{upper estimates for global dimension holomorphic}
	
	Let $R$ be a $\hat{\otimes}$-algebra. Suppose that $A$ is one of the two $\hat{\otimes}$-algebras:
	\begin{enumerate}[label=(\arabic*)]
		\item $A = \mathcal{O}(\mathbb{C}, R; \alpha, \delta)$, where $\alpha$ is invertible, and the pair $\{\alpha, \delta\}$ is localizable.
		\item $A = \mathcal{O}(\mathbb{C}^\times, R; \alpha)$, where the pair $\{\alpha, \alpha^{-1}\}$ is localizable.
	\end{enumerate}
	Then the right global dimension of $A$ can be estimated as follows:
	\[
	\text{dgr}(A) \le \text{dgr}(R) + 1,
	\]
	and a similar estimate holds for the weak dimensions if $R$ is a Fr\'echet algebra:
	\[
	\text{w.dg}(A) \le \text{w.dg}(R) + 1.
	\]
\end{theorem}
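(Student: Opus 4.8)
The plan is to bound $\text{dh}_{A^{\text{op}}}(M)$ by $\text{dgr}(R)+1$ for an arbitrary right $A$-$\hat{\otimes}$-module $M$ and then pass to the supremum over all such $M$. In both cases (1) and (2) the algebra $A$ is free as a left $R$-$\hat{\otimes}$-module (Corollary \ref{homolproperties}) and $\alpha$ is invertible, so the two situations can be treated uniformly. The engine of the argument is the same fundamental short exact sequence of $A$-$\hat{\otimes}$-bimodules
\[
0 \to A_\alpha \hat{\otimes}_R A \xrightarrow{j} A \hat{\otimes}_R A \xrightarrow{m} A \to 0
\]
used in the proof of Theorem \ref{upper estimates for bidimension holomorphic}, namely \eqref{exact1}, which is supplied by Proposition \ref{splits} together with the computation of $\widehat{\Omega}^1_R(A)$ in Proposition \ref{holomorphic Ore extension}. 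The crucial feature is that this sequence splits in $A\text{-}\textbf{mod}\text{-}R$, hence in particular splits as a sequence of left $A$-$\hat{\otimes}$-modules.

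First I would apply the functor $M \hat{\otimes}_A(-)$, which only sees the left $A$-module structure, to this sequence. Since a functor preserves split exactness and the left $A$-linear splitting survives, the resulting complex is admissible; using the canonical isomorphisms $M \hat{\otimes}_A A \cong M$, $M \hat{\otimes}_A(A\hat{\otimes}_R A) \cong M \hat{\otimes}_R A$, and $M \hat{\otimes}_A(A_\alpha \hat{\otimes}_R A) \cong M_\alpha \hat{\otimes}_R A$, it reads as an admissible short exact sequence of right $A$-$\hat{\otimes}$-modules
\[
0 \to M_\alpha \hat{\otimes}_R A \to M \hat{\otimes}_R A \to M \to 0.
\]

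Next I would estimate the homological dimensions of the two outer terms. Because $A$ is free as a left $R$-$\hat{\otimes}$-module, Proposition \ref{free} gives $\text{dh}_{A^{\text{op}}}(M \hat{\otimes}_R A) \le \text{dh}_{R^{\text{op}}}(M) \le \text{dgr}(R)$ and likewise $\text{dh}_{A^{\text{op}}}(M_\alpha \hat{\otimes}_R A) \le \text{dh}_{R^{\text{op}}}(M_\alpha)$. Here is exactly where the invertibility of $\alpha$ enters: by Lemma \ref{auto} we have $\text{dh}_{R^{\text{op}}}(M_\alpha) = \text{dh}_{R^{\text{op}}}(M) \le \text{dgr}(R)$. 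Feeding these two bounds into the third subadditivity estimate of Theorem \ref{subadditivity}, applied to the short exact sequence above with $M$ playing the role of the quotient term, yields
\[
\text{dh}_{A^{\text{op}}}(M) \le \max\{\text{dh}_{A^{\text{op}}}(M\hat{\otimes}_R A),\ \text{dh}_{A^{\text{op}}}(M_\alpha \hat{\otimes}_R A) + 1\} \le \text{dgr}(R) + 1,
\]
and taking the supremum over all right $A$-$\hat{\otimes}$-modules $M$ gives $\text{dgr}(A) \le \text{dgr}(R)+1$.

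Finally, for the weak estimate, when $R$ (and hence $A = R \hat{\otimes}\mathcal{O}(\mathbb{C})$) is a Fr\'echet algebra, I would rerun the identical argument inside $\textbf{mod}\text{-}A(\textbf{Fr})$ with $\text{dh}$ replaced by $\text{w.dh}$: the displayed sequence is still admissible, Proposition \ref{free} part (2) supplies $\text{w.dh}_{A^{\text{op}}}(M\hat{\otimes}_R A) \le \text{w.dh}_{R^{\text{op}}}(M)$, Lemma \ref{auto} gives the corresponding equality for $M_\alpha$, and Proposition \ref{weaksubadd} provides the weak subadditivity, whence $\text{w.dg}(A) \le \text{w.dg}(R)+1$. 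I expect the only genuinely delicate point to be the derivation and admissibility of the fundamental sequence for $M$ --- confirming that applying $M \hat{\otimes}_A(-)$ to \eqref{exact1} both identifies the three terms as claimed (in particular the twist producing $M_\alpha$) and preserves admissibility, which is precisely the step that consumes the splitting in $A\text{-}\textbf{mod}\text{-}R$ from Proposition \ref{splits}. The homological-dimension bookkeeping afterwards is routine given the lemmas already established.
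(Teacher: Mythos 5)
Your proposal is correct and follows essentially the same route as the paper's own proof: applying $M \hat{\otimes}_A (-)$ to the fundamental sequence \eqref{exact1}, identifying the result with $0 \to M_\alpha \hat{\otimes}_R A \to M \hat{\otimes}_R A \to M \to 0$, and combining Proposition \ref{free}, Lemma \ref{auto}, and the subadditivity results (Theorem \ref{subadditivity}, Proposition \ref{weaksubadd}). The only cosmetic difference is that you justify admissibility via the induced splitting in \textbf{LCS} coming from the left $A$-linear splitting, whereas the paper records that the image sequence splits in $\textbf{mod}\text{-}R$; both are valid and yield the same conclusion.
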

\begin{proof}
	Suppose that $M$ is a right $A$-$\hat{\otimes}$-module. Then we can apply the functor $M\hat{\otimes}_A (-)$ to the sequence \eqref{exact1}. Notice that the resulting sequence of right $A$-$\hat{\otimes}$-modules
	\begin{equation*}
	\begin{tikzcd}
	0 \ar[r] & M \hat{\otimes}_A A_\alpha \hat{\otimes}_R A \ar[r, "\text{Id}_M \otimes j"] & M \hat{\otimes}_A A \hat{\otimes}_R A \ar[r, "\text{Id}_M \otimes m"] & M \hat{\otimes}_A A \ar[r] & 0
	\end{tikzcd}
	\end{equation*}
	is isomorphic to the sequence
	\begin{equation}
	\label{exact2}
	\begin{tikzcd}
	0 \ar[r] & M_\alpha \hat{\otimes}_R A \ar[r, "j'"] & M \hat{\otimes}_R A \ar[r, "m"] & M \ar[r] & 0.
	\end{tikzcd}
	\end{equation}
	
	Since \eqref{exact1} splits in $A\text{-}\textbf{mod}\text{-}R$, \eqref{exact2} splits in $\textbf{mod}\text{-}R$, in particular, this is an admissible short exact sequence. 
	
	Now notice that we can apply Theorem \ref{subadditivity} to \eqref{exact2}, so we get
	$$
	\text{dh}_{A^\text{op}}(M) \le \max\{ \text{dh}_{A^\text{op}}(M \hat{\otimes}_R A), \text{dh}_{A^\text{op}}(M_\alpha \hat{\otimes}_R A) + 1 \} \le \text{dh}_{R^\text{op}}(M) + 1
	$$
	due to the Corollary \ref{homolproperties}, Proposition \ref{free} and Lemma \ref{auto}. Hence, the following estimate holds: 
	$$
	\text{dgr}(A) \le \text{dgr}(R) + 1.
	$$
	
	For the weak dimensions we apply the second part of the Proposition \ref{weaksubadd} to the sequence \eqref{exact2}:
	\[
	\text{w.dh}_{A^\text{op}}(M) \le \max\{ \text{w.dh}_{A^\text{op}}(M \hat{\otimes}_R A), \text{w.dh}_{A^\text{op}}(M_\alpha \hat{\otimes}_R A)\} + 1 \le \text{w.dh}_{R^\text{op}}(M) + 1.
	\]
	
\end{proof}

%

%

\subsection{Lower estimates}
In order to obtain lower estimates, we need to formulate the following lemma:

\begin{proposition}
	\label{adm}
	Suppose that $R$ is a $\hat{\otimes}$-algebra, and $A$ is a $R$-$\hat{\otimes}$-algebra which is free as a left $R$-$\hat{\otimes}$-module. Also assume that there exists an isomorphism of left $R$-$\hat{\otimes}$-modules $\varphi : A \rightarrow R \hat{\otimes} E$ such that $\varphi(1) = 1 \otimes x$ for some $x \in E$. 
	
	Then $i : M \rightarrow M \hat{\otimes}_R A, i(m) = m \otimes 1$ is an admissible monomorphism for every $M \in \textbf{mod}\text{-}R$. In particular, it is a coretraction between the underlying locally convex spaces.
\end{proposition}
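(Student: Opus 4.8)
The plan is to collapse $i$ to a trivial-looking map by exploiting the freeness of $A$, and then to split that map off using a single Hahn--Banach functional.

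First I would use $\varphi$ to identify the target $M \hat{\otimes}_R A$ with $M \hat{\otimes} E$. Applying the functor $M \hat{\otimes}_R (-)$ to $\varphi$ and then invoking associativity of the completed projective tensor product together with the right-module version of the canonical isomorphism $M \hat{\otimes}_R R \xrightarrow{\sim} M$ of \eqref{a simple tensor product}, one obtains a chain of topological isomorphisms of locally convex spaces
\[
\Psi : M \hat{\otimes}_R A \xrightarrow{\text{Id}_M \hat{\otimes}_R \varphi} M \hat{\otimes}_R (R \hat{\otimes} E) \xrightarrow{\sim} (M \hat{\otimes}_R R) \hat{\otimes} E \xrightarrow{\sim} M \hat{\otimes} E.
\]
Tracking an elementary tensor through this chain and using the hypothesis $\varphi(1) = 1 \otimes x$, one checks that $\Psi(i(m)) = \Psi(m \otimes 1) = m \otimes x$ for every $m \in M$. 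Thus, under $\Psi$, the map $i$ becomes the map $M \to M \hat{\otimes} E$, $m \mapsto m \otimes x$.

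Next I would produce a continuous retraction on the side of $M \hat{\otimes} E$. Since $\varphi$ is injective and $1 \neq 0$ in $A$, the vector $x \in E$ is nonzero; as $E$ is a Hausdorff locally convex space, the Hahn--Banach theorem furnishes a continuous linear functional $f : E \to \mathbb{C}$ with $f(x) = 1$. By functoriality of $\hat{\otimes}$ the map $r := \text{Id}_M \hat{\otimes} f : M \hat{\otimes} E \to M \hat{\otimes} \mathbb{C} \cong M$ is continuous and linear, and it satisfies $r(m \otimes x) = f(x)\, m = m$. Transporting $r$ back along $\Psi$ then yields a continuous linear map $\rho := r \circ \Psi : M \hat{\otimes}_R A \to M$ with $\rho \circ i = \text{Id}_M$. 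Hence $i$ is a coretraction in $\textbf{LCS}$, which is precisely the assertion that $i$ is an admissible monomorphism.

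The only genuinely delicate points are the verification that the identification $\Psi$ is a topological, and not merely algebraic, isomorphism --- which rests on the associativity and completeness properties of $\hat{\otimes}$ recorded earlier --- and the careful tracking of $i$ through $\Psi$. Once these are in place, the Hahn--Banach step and the computation $\rho \circ i = \text{Id}_M$ are routine. Note that $\rho$ is required only to be a morphism in $\textbf{LCS}$, so there is no need for it to respect the right $R$-module structure, and indeed the functional $f$ carries no such structure.
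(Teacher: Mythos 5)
Your proposal is correct and takes essentially the same route as the paper's own proof: both transport $i$ along the chain $M \hat{\otimes}_R A \cong M \hat{\otimes}_R R \hat{\otimes} E \cong M \hat{\otimes} E$ induced by $\varphi$, so that $i$ becomes $m \mapsto m \otimes x$, and then split it with a Hahn--Banach functional $f \in E^*$ satisfying $f(x) = 1$ via the continuous map $n \otimes y \mapsto f(y)\,n$. The only differences are cosmetic: you spell out why $x \neq 0$ and why the identification is a topological isomorphism, points the paper leaves implicit.
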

\begin{proof}
	Look at the following diagram:
	$$
	\begin{tikzcd}
		M \ar[r, "i"] \ar[rrr, bend right=20, "m \rightarrow m \otimes x"] & M \hat{\otimes}_R A \ar[r, "\text{Id}_M \otimes \varphi"] & M \hat{\otimes}_R R \hat{\otimes} E \ar[r, "\pi \otimes \text{Id}_E"] & M \hat{\otimes} E,
	\end{tikzcd}
	$$
	where $\pi : M \hat{\otimes}_R R \rightarrow M, \pi(m \otimes r) = mr$.
	 
	Due to the Hahn-Banach theorem there exists a functional $f \in E^*$ such that $f(x) = 1$, so the map $m \rightarrow m \otimes x$ admits a right inverse, which is uniquely defined by $n \otimes y \rightarrow f(y)n$, therefore $i$ as a mapping of lcs admits a right inverse too, because $\text{Id}_M \otimes \varphi$ and $\pi \otimes \text{Id}_E$ are invertible.
\end{proof}

\begin{proposition}
	\label{conditions for lower estimates}
	 Let $R$ be a Fr\'echet algebra, and assume that $\text{dgr}(R) < \infty$. Suppose that the following conditions hold:
	\begin{enumerate}[label=(\arabic*)]
	\item Let $A$ be a $R$-$\hat{\otimes}$-algebra which is a free left $R$-$\hat{\otimes}$-module, moreover, we can choose an isomorphism of left $R$-$\hat{\otimes}$-modules  $\varphi : A \rightarrow R \hat{\otimes} E$ in such a way that ${\varphi(1) = 1 \otimes x}$ for some $x \in E$.
	\item  $A$ is projective as a right $R$-$\hat{\otimes}$-module.
	\end{enumerate}
	Then $\text{dgr}_{\textbf{Fr}}(R) \le \text{dgr}_{\textbf{Fr}}(A)$ and $\text{w.dg}(R) \le \text{w.dg}(A)$.
\end{proposition}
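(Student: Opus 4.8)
The plan is to reduce both inequalities to a per-module statement and take a supremum. Concretely, I would show that for every right $R$-$\hat{\otimes}$-module $M \in \textbf{mod}\text{-}R(\textbf{Fr})$ one has $\text{dh}_{R^{\text{op}}}(M) \le \text{dgr}_{\textbf{Fr}}(A)$, and in the Fréchet setting the analogue $\text{w.dh}_{R^{\text{op}}}(M) \le \text{w.dg}(A)$; passing to the supremum over $M$ then gives the two assertions. The bridge between $M$ and the algebra $A$ is the induced module $M \hat{\otimes}_R A$ together with the map $i : M \to M \hat{\otimes}_R A$, $i(m) = m \otimes 1$, which by Proposition \ref{adm} is an admissible monomorphism of right $R$-$\hat{\otimes}$-modules — the hypotheses of that proposition being exactly conditions (1)--(2).

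First I would bound the dimension of the induced module over $R$. As $M \hat{\otimes}_R A$ is a right $A$-$\hat{\otimes}$-module, the definition of the global dimension gives $\text{dh}_{A^{\text{op}}}(M \hat{\otimes}_R A) \le \text{dgr}_{\textbf{Fr}}(A)$. Condition (2) makes $A$ projective (hence flat) as a right $R$-$\hat{\otimes}$-module, so the transfer inequality of Lemma \ref{trans}, applied to the right $A$-module $X = M \hat{\otimes}_R A$, pushes the dimension down to $R$:
\[
\text{dh}_{R^{\text{op}}}(M \hat{\otimes}_R A) \le \text{dh}_{A^{\text{op}}}(M \hat{\otimes}_R A) \le \text{dgr}_{\textbf{Fr}}(A),
\]
and in the Fréchet case its weak counterpart $\text{w.dh}_{R^{\text{op}}}(X) \le \text{w.dh}_{A^{\text{op}}}(X)$ gives
\[
\text{w.dh}_{R^{\text{op}}}(M \hat{\otimes}_R A) \le \text{w.dh}_{A^{\text{op}}}(M \hat{\otimes}_R A) \le \text{w.dg}(A).
\]

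Next I would feed the admissible monomorphism into the subadditivity estimate. Writing $C = \operatorname{coker} i$, Proposition \ref{adm} yields an admissible short exact sequence $0 \to M \xrightarrow{i} M \hat{\otimes}_R A \to C \to 0$ in $\textbf{mod}\text{-}R(\textbf{Fr})$, and the middle estimate of Theorem \ref{subadditivity} gives
\[
\text{dh}_{R^{\text{op}}}(M) \le \max\{\text{dh}_{R^{\text{op}}}(M \hat{\otimes}_R A),\ \text{dh}_{R^{\text{op}}}(C) - 1\} \le \max\{\text{dgr}_{\textbf{Fr}}(A),\ \text{dgr}_{\textbf{Fr}}(R) - 1\},
\]
where the bound on the cokernel term uses only that $C$ is again a Fréchet right $R$-module. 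Taking the supremum over $M$ produces $\text{dgr}_{\textbf{Fr}}(R) \le \max\{\text{dgr}_{\textbf{Fr}}(A),\ \text{dgr}_{\textbf{Fr}}(R) - 1\}$; since $\text{dgr}_{\textbf{Fr}}(R) \le \text{dgr}(R) < \infty$, the shifted term $\text{dgr}_{\textbf{Fr}}(R) - 1$ is strictly below $\text{dgr}_{\textbf{Fr}}(R)$ and cannot attain the maximum, forcing $\text{dgr}_{\textbf{Fr}}(R) \le \text{dgr}_{\textbf{Fr}}(A)$. The weak statement is proved verbatim with Proposition \ref{weaksubadd} in place of Theorem \ref{subadditivity}; here I would first record that $\text{w.dg}(R) \le \text{dgr}(R) < \infty$ (a projective resolution is a flat one), so the identical finiteness cancellation is available.

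The delicate point — and the only one — is that Proposition \ref{adm} supplies merely a coretraction of $i$ in $\textbf{LCS}$, not an $R$-module retraction: when $\delta \neq 0$ the degree filtration of the Ore extension is not respected by right multiplication, so $M$ is genuinely \emph{not} a module direct summand of $M \hat{\otimes}_R A$, and the naive comparison $\text{dh}_{R^{\text{op}}}(M) \le \text{dh}_{R^{\text{op}}}(M \hat{\otimes}_R A)$ is unavailable. The subadditivity estimate is what circumvents this: the unknown cokernel $C$ enters only through the shifted quantity $\text{dh}_{R^{\text{op}}}(C) - 1$, and the hypothesis $\text{dgr}(R) < \infty$ is exactly what makes this shifted term negligible, so no control over $C$ beyond finiteness of the global dimension is ever required. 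This is precisely where the otherwise unused finiteness assumption does its work.
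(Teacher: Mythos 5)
Your proof is correct, and while it rests on the same skeleton as the paper's --- the admissible monomorphism $i \colon M \rightarrow M \hat{\otimes}_R A$ from Proposition \ref{adm}, subadditivity, and the transfer across $R \rightarrow A$ via Lemma \ref{trans} --- the finishing argument is genuinely different. The paper fixes a single extremal module with $\text{dh}_{R^{\text{op}}}(M) = \text{dgr}(R) = n$, uses the equality case (``In particular'') of Theorem \ref{subadditivity} to conclude $\text{dh}_{R^{\text{op}}}(M \hat{\otimes}_R A) = n$ (the exceptional case being excluded since the cokernel has dimension at most $n$), and then sandwiches $n = \text{dh}_{R^{\text{op}}}(M \hat{\otimes}_R A) \le \text{dh}_{A^{\text{op}}}(M \hat{\otimes}_R A) \le \text{dh}_{R^{\text{op}}}(M) = n$ using both Lemma \ref{trans} and Proposition \ref{free}. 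Your per-module bound with the middle estimate, followed by the supremum and the finiteness cancellation, avoids the equality case of Theorem \ref{subadditivity} and dispenses with Proposition \ref{free} altogether, since you bound $\text{dh}_{A^{\text{op}}}(M \hat{\otimes}_R A) \le \text{dgr}_{\textbf{Fr}}(A)$ directly from the definition; in exchange, the paper's version yields the slightly sharper fact that the induced module itself satisfies $\text{dh}_{A^{\text{op}}}(M \hat{\otimes}_R A) = \text{dgr}(R)$. For the weak dimensions the routes diverge more: the paper dualizes, replacing $X$ by $X^*$ and invoking the Ext-characterization of $\text{w.dh}$ in Proposition \ref{weaksubadd}, whereas you rerun the subadditivity scheme using the weak transfer inequality. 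One caution there: as printed, Lemma \ref{trans}(2) states $\text{w.dh}_{A^{\text{op}}}(X) \le \text{w.dh}_{R^{\text{op}}}(X)$, the reverse of the inequality you quote; but your direction $\text{w.dh}_{R^{\text{op}}}(X) \le \text{w.dh}_{A^{\text{op}}}(X)$ is the one the lemma's own proof actually establishes (a flat resolution in $\textbf{mod}\text{-}A$ restricts to one in $\textbf{mod}\text{-}R$, admissibility being a property of the underlying LCS complex), and the printed direction already fails for $R = \mathbb{C}$, so this is a typo in the paper rather than a gap in your argument --- though you should cite the corrected statement explicitly. Your ancillary points --- that $i$ is split only in \textbf{LCS} so no naive monotonicity is available, that finiteness of $\text{dgr}(R)$ is precisely what neutralizes the uncontrolled cokernel, and that $\text{w.dg}(R) \le \text{dgr}(R) < \infty$ legitimizes the weak cancellation --- are all accurate, and both proofs share the same implicit use of $M \hat{\otimes}_R A$ being a Fr\'echet $A$-module.
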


\begin{proof}
	Fix a Fr\'echet module $M \in \textbf{mod}\text{-}R(\textbf{Fr})$ such that $\text{dh}_{R^{\text{op}}}(M) = \text{dgr}(R) = n$. The Proposition \ref{adm} states that the map $M \rightarrow M \hat{\otimes}_R A, \, m \rightarrow m \otimes 1$ is an admissible monomorphism, so there exists a short exact sequence
	\[
	0 \rightarrow M \xhookrightarrow{i} M \hat{\otimes}_R A \rightarrow N \rightarrow 0,
	\]
	where $N \cong (M \hat{\otimes}_R A) / i(M)$, which is precisely the cokernel of $i$. This sequence is admissible because the projection map is open and $i$ is an admissible monomorphism, we can apply the \cite[Proposition 3.1.8(III)]{Helem1986}. 
	
	\textbf{Remark.} If $M$ is an arbitrary $R$-$\hat{\otimes}$-module, then the map
	\[
	N \rightarrow (M \hat{\otimes}_R A) / i(M) \rightarrow ((M \hat{\otimes}_R A) / i(M))^\sim
	\]
	is \textit{not} open in general.
	
	Notice that $\text{dh}_{R^{\text{op}}}(M) = \text{dgr}(R)$ and $\text{dh}_{R^{\text{op}}}(N) \le \text{dgr}(R)$, therefore $\text{dh}_{R^{\text{op}}}(M \hat{\otimes}_R A) = \text{dgr}(R)$ due to the Proposition \ref{subadditivity}. 

	Now recall that $A$ is projective as a right $R$-$\hat{\otimes}$-module, therefore, due to the Lemma \ref{trans}, and the Proposition \ref{free} we have
	\[
	\text{dgr}(R) = \text{dh}_{R^{\text{op}}}(M \hat{\otimes}_R A) \stackrel{L\ref{trans}}{\le} \text{dh}_{A^{\text{op}}}(M \hat{\otimes}_R A) \stackrel{P\ref{free}}{\le} \text{dh}_{R^{\text{op}}}(M) = \text{dgr}(R).
	\]
	This immediately implies $\text{dgr}_{\textbf{Fr}}(R) \le \text{dgr}_{\textbf{Fr}}(A)$.
	
	By replacing $X$ with $X^*$ in the above argument and applying the Proposition \ref{weaksubadd} we get that $\text{w.dg}(R) \le \text{w.dg}(A)$.
\end{proof}

As a quick corollary from the Proposition \ref{conditions for lower estimates} and the Corollary \ref{homolproperties} we obtain lower estimates for the homological dimensions.

\begin{theorem}
	\label{lower estimates for global dimensions holomorphic}
	Let $R$ be a Fr\'echet algebra, and suppose that $\text{dgr}(R^{\text{op}}) < \infty$ and $A$ is one of the two $\hat{\otimes}$-algebras:
	\begin{enumerate}[label=(\arabic*)]
		\item $A = \mathcal{O}(\mathbb{C}, R; \alpha, \delta)$, where $\alpha$ is invertible, and the pair $(\alpha, \delta)$ is \textit{m}-localizable.
		\item $A = \mathcal{O}(\mathbb{C}^\times, R; \alpha)$, where the pair $(\alpha, \alpha^{-1})$ is localizable.
	\end{enumerate}
	Then the conditions of the Proposition \ref{conditions for lower estimates} are satisfied. As a corollary, we have the following estimates:
	$$
	\text{dgr}_{\textbf{Fr}}(R) \le \text{dgr}_{\textbf{Fr}}(A), \quad \text{w.dg}(R) \le \text{w.dg}(A).
	$$
\end{theorem}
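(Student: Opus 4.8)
The plan is to verify that the two families of holomorphic Ore extensions listed in the theorem satisfy the two hypotheses of Proposition \ref{conditions for lower estimates}, after which the desired estimates follow immediately. Proposition \ref{conditions for lower estimates} requires: (i) $A$ is free as a left $R$-$\hat{\otimes}$-module, with a left-module isomorphism $\varphi : A \to R \hat{\otimes} E$ taking $1$ to $1 \otimes x$ for some $x \in E$; and (ii) $A$ is projective as a right $R$-$\hat{\otimes}$-module. The strategy is purely a bookkeeping exercise of invoking Corollary \ref{homolproperties} for each of the two cases.

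First I would handle condition (i). In both cases the underlying space of $A$ is $R \hat{\otimes} \mathcal{O}(\mathbb{C})$ or $R \hat{\otimes} \mathcal{O}(\mathbb{C}^\times)$, so we may take $E = \mathcal{O}(\mathbb{C})$ (respectively $E = \mathcal{O}(\mathbb{C}^\times)$) and let $\varphi$ be the canonical isomorphism of left $R$-$\hat{\otimes}$-modules coming from the construction in Theorem \ref{holomoreex1}. The unit of $A$ is the image of the constant function $1 \in \mathcal{O}(\mathbb{C})$ (resp. $\mathcal{O}(\mathbb{C}^\times)$) under the inclusion $\mathbb{C}[z] \hookrightarrow A$, so $\varphi(1) = 1 \otimes x$ with $x$ the constant function $1 \in E$. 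This verifies (i) in both cases.

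For condition (ii) I would appeal directly to Corollary \ref{homolproperties}. In case (2), where $A = \mathcal{O}(\mathbb{C}^\times, R; \alpha)$ and $(\alpha, \alpha^{-1})$ is localizable, part (2) of that corollary gives that $A$ is free (hence projective) as a \emph{right} $R$-$\hat{\otimes}$-module. In case (1), where $A = \mathcal{O}(\mathbb{C}, R; \alpha, \delta)$ with $\alpha$ invertible and $(\alpha, \delta)$ being $m$-localizable, part (3) of Corollary \ref{homolproperties} yields that $A$ is free, and therefore projective, as a right $R$-$\hat{\otimes}$-module. This is precisely where the stronger $m$-localizability hypothesis in case (1) is needed: the identification of $\mathcal{O}(\mathbb{C}, R; \alpha, \delta)$ with an opposite extension (which is manifestly right-free) proceeds through the universal property of Theorem \ref{holomoreex1}, and that universal property is only available for Arens--Michael algebras, forcing the passage from ``localizable'' to ``$m$-localizable''.

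With both hypotheses checked, Proposition \ref{conditions for lower estimates} applies verbatim and delivers $\text{dgr}_{\textbf{Fr}}(R) \le \text{dgr}_{\textbf{Fr}}(A)$ and $\text{w.dg}(R) \le \text{w.dg}(A)$, completing the proof. The only genuinely delicate point in this argument is the right-freeness in case (1): verifying that the algebraic isomorphism $A[t;\alpha,\delta] \cong A_{\text{op}}[t; \alpha^{-1}, -\delta\alpha^{-1}]$ of \eqref{Ore isomorphisms} extends continuously and invertibly to the completed holomorphic Ore extensions. This was already carried out in the proof of Corollary \ref{homolproperties}(3), using that the extensions agree on the dense subalgebras $A[t;\alpha,\delta]$ and are continuous, so here it suffices to cite that corollary. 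Everything else is a direct transcription of hypotheses into the form demanded by Proposition \ref{conditions for lower estimates}.
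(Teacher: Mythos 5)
Your proposal is correct and follows exactly the paper's own route: the paper likewise disposes of the theorem by citing Corollary \ref{homolproperties} (parts (3) and (2) for cases (1) and (2) respectively) to get left/right freeness and then noting that both hypotheses of Proposition \ref{conditions for lower estimates} follow from this and the construction of the holomorphic Ore extensions. Your write-up merely spells out the details (the choice $E = \mathcal{O}(\mathbb{C})$ or $\mathcal{O}(\mathbb{C}^\times)$ with $x$ the constant function $1$, and why $m$-localizability is needed in case (1)) that the paper compresses into ``we have nothing to prove.''
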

\begin{proof}
	We have nothing to prove, because the Corollary \ref{homolproperties} ensures that $A$ is free as a left and right $R$-$\hat{\otimes}$-module, both conditions follow from this and the construction of holomorphic Ore extensions.
\end{proof}

%

\section{Homological dimensions of smooth crossed products by $\mathbb{Z}$}
\label{Homological dimensions of smooth crossed products}
First of all, let us recall the definition of the space of rapidly decreasing sequences:
\begin{definition}
	\[
	\begin{split}
	s &= \left\lbrace (a_n) \in \mathbb{C}^\mathbb{Z} : \left\| a \right\|_k =  \sup_{n \in \mathbb{Z}} |a_n| (|n|+1)^k < \infty ~ \forall k \in \mathbb{N} \right\rbrace = \\
	&\cong \left\lbrace (a_n) \in \mathbb{C}^\mathbb{Z} :  \left\| a \right\|^2_k = \sum_{n \in \mathbb{Z}} |a_n|^2 (|n|+1)^{2k} < \infty ~ \forall k \in \mathbb{N} \right\rbrace = \\
	&\cong \left\lbrace (a_n) \in \mathbb{C}^\mathbb{Z} : \left\| a \right\|_k = \sum_{n \in \mathbb{Z}} |a_n| (|n|+1)^k < \infty ~ \forall k \in \mathbb{N} \right\rbrace.
	\end{split}
	\]
\end{definition}

This is the Example 29.4 in \cite{vogt1997}. Clearly, this is a Fr\'echet space.

The following definitions and theorems are due to L. Schweitzer, see \cite{schweitzer1993dense} or \cite{phillips1994representable} for more detail.

\begin{definition}
	Suppose that $R$ is a Fr\'echet algebra. Let $G$ be one of the groups $\mathbb{R}, \mathbb{T}$ or $\mathbb{Z}$ and suppose that $\alpha : G \rightarrow \text{Aut}(R)$ is an action  of $G$ on $R$. Then $\alpha$ is called an $m$\textit{-tempered} action if the topology on $R$ can be defined by a sequence of submultiplicative seminorms $\{ \left\| \cdot \right\|_m : m \in \mathbb{N} \}$ such that for every $m$ there exists a polynomial $p$ satisfying
	\[
	\left\| \alpha_x(r) \right\|_\lambda \le |p(x)| \left\| r \right\|_\lambda
	\]
	for any $x \in G$ and $r \in R$.
\end{definition}

\begin{definition}
	\indent
	\begin{enumerate}
		\item[(1)] Let $E$ be a Hausdorff topological vector space. For a function $f : \mathbb{R} \longrightarrow E$ and $x \in \mathbb{R}$ we denote
		\[
		f'(x) := \lim_{h \rightarrow 0} \frac{f(x+h) - f(x)}{h}.
		\]
		(vector-valued differentiation).
		\item[(2)] Let $A$ be a Fr\'echet algebra with a fixed generating system of seminorms $\{ \left\| \cdot \right\|_\lambda, \lambda \in \Lambda\}$ Then we can define the following locally convex spaces:
		\[
		\mathscr{S}(\mathbb{Z}, A) = \left\lbrace f = (f^{(k)})_{k \in \mathbb{Z}} \in A^{\mathbb{Z}}  : \left\| f \right\|_{\lambda, k} := \sum_{n \in \mathbb{Z}} \left\| f^{(n)} \right\|_\lambda (|n|+1)^k < \infty \text{ for all } \lambda \in \Lambda, k \in \mathbb{N} \right\rbrace,
		\]
		\[
		\mathscr{S}(\mathbb{T}, A) = \left\lbrace f : \mathbb{T} \rightarrow A : \left\| f \right\|_{\lambda, k} := \sup_{z \in \mathbb{T}} \left\| f^{(k)}(z) \right\|_\lambda < \infty  \text{ for all } \lambda \in \Lambda, k \ge 0 \right\rbrace, 
		\]
		\[
		\mathscr{S}(\mathbb{R}, A) = \left\lbrace f : \mathbb{R} \rightarrow A : \left\| f \right\|_{\lambda, k, l} := \sup_{x \in \mathbb{R}} \left\| x^l f^{(k)}(x) \right\|_\lambda < \infty \text{ for all } \lambda \in \Lambda, k, l \ge 0 \right\rbrace. 
		\]
	\end{enumerate}
\end{definition} 


\begin{theorem}[\cite{schweitzer1993dense}, Theorem 3.1.7]
	\label{def of smooth ore extensions}
	Let $R$ be a Fr\'echet-Arens-Michael algebra with an \newline $m$-tempered action of one of the groups $\mathbb{R}, \mathbb{T}$ or $\mathbb{Z}$. Then the space $\mathscr{S}(G, R)$ endowed with the following multiplication:
	\[
	(f * g)(x) = \int_G f(y) \alpha_y(g(xy^{-1})) dy
	\]
	becomes a Fr\'echet-Arens-Michael algebra. This algebra is denoted by $\mathscr{S}(G, R; \alpha)$.
\end{theorem}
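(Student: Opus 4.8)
The plan is to verify separately the four things packed into the statement: that the convolution $*$ is well defined (the integral lands back inside $\mathscr{S}(G,R)$), that it is associative, that $\mathscr{S}(G,R)$ is a Fr\'echet space, and finally that the resulting Fr\'echet algebra is locally $m$-convex, so that it is Arens-Michael. I would carry out the argument in detail for $G=\mathbb{Z}$, where the Haar integral degenerates into the absolutely convergent sum $(f*g)^{(x)}=\sum_{y\in\mathbb{Z}}f^{(y)}\alpha_y\!\left(g^{(x-y)}\right)$; the cases $G=\mathbb{T}$ and $G=\mathbb{R}$ go through identically, with Fubini's theorem replacing the interchange of summation and with $m$-temperedness used to differentiate under the integral sign. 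Everything rests on a single seminorm estimate, which I would establish first.

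Since $R$ is Fr\'echet-Arens-Michael, fix a countable generating family of \emph{submultiplicative} seminorms $\{\|\cdot\|_\lambda\}$, and for each $\lambda$ choose, by $m$-temperedness, a polynomial $p_\lambda$ with $\|\alpha_y(r)\|_\lambda\le|p_\lambda(y)|\,\|r\|_\lambda$ and $|p_\lambda(y)|\le C_\lambda(|y|+1)^{d_\lambda}$. Writing $\|f\|_{\lambda,k}=\sum_n\|f^{(n)}\|_\lambda(|n|+1)^k$, I would bound $\|f*g\|_{\lambda,k}$ by the triangle inequality, then submultiplicativity of $\|\cdot\|_\lambda$, then the $m$-tempered estimate, then the substitution $z=x-y$, and finally the submultiplicativity of the weight, namely $(|y+z|+1)^k\le(|y|+1)^k(|z|+1)^k$, to obtain
\[
\|f*g\|_{\lambda,k}\le C_\lambda\,\|f\|_{\lambda,\,k+d_\lambda}\,\|g\|_{\lambda,k}.
\]
This one inequality shows at once that $f*g$ again lies in $\mathscr{S}(\mathbb{Z},R)$ and that multiplication is jointly continuous. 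Associativity I would then check by a direct computation: expanding $(f*g)*h$ and $f*(g*h)$ as double sums, the cocycle identity $\alpha_y\alpha_w=\alpha_{y+w}$ together with the substitution $u=y+w$ matches the two expressions term by term, the rearrangement being licit by the absolute convergence the estimate provides. Completeness and metrizability are formal, since $\mathscr{S}(\mathbb{Z},R)$ is a countable projective limit of the weighted $\ell^1$-spaces built from the Banach completions of $R$ at the $\|\cdot\|_\lambda$; hence it is Fr\'echet.

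The genuinely delicate point, and the one where $m$-temperedness is indispensable, is local $m$-convexity: the existence of a generating family of submultiplicative seminorms. I expect this to be the main obstacle. The seminorms $\|\cdot\|_{\lambda,k}$ are themselves \emph{not} submultiplicative, and in fact no weighted-$\ell^1$ seminorm $\sum_n\|f^{(n)}\|_\lambda\,\sigma(n)$ can be, because putting one factor equal to the unit forces $\|\alpha_y\|_\lambda\le\sigma(0)$, which is impossible as $\|\alpha_y\|_\lambda$ grows without bound. To repair this I would pass to the left-multiplier seminorms
\[
\|f\|^{*}_{\lambda,k}:=\sup\{\|f*h\|_{\lambda,k}:\|h\|_{\lambda,k}\le 1\}.
\]
The key estimate gives $\|f\|^{*}_{\lambda,k}\le C_\lambda\|f\|_{\lambda,k+d_\lambda}<\infty$, so these are continuous seminorms; they are submultiplicative by the standard operator-seminorm computation, using $\|f*u\|_{\lambda,k}\le\|f\|^{*}_{\lambda,k}\|u\|_{\lambda,k}$ for every $u$ and the identity $(f*g)*h=f*(g*h)$; and, testing against the unit $h=\delta_0\otimes 1_R$, which satisfies $f*h=f$ and $\|h\|_{\lambda,k}=\|1_R\|_\lambda$, one gets the reverse bound $\|f\|_{\lambda,k}\le\|1_R\|_\lambda\,\|f\|^{*}_{\lambda,k}$.

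Thus the family $\{\|\cdot\|^{*}_{\lambda,k}\}$ is submultiplicative and generates the original topology, which exhibits $\mathscr{S}(\mathbb{Z},R;\alpha)$ as a countable projective limit of Banach algebras, that is, as an Arens-Michael algebra. The same multiplier construction closes the $\mathbb{T}$ and $\mathbb{R}$ cases once their analogue of the continuity estimate is in hand. The upshot is that, after the routine estimate of the second paragraph, the real work lies in recognizing that submultiplicativity must be extracted from the multiplier (representation) seminorms rather than from any naive weighting of the coefficients.
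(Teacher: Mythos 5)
The paper offers no proof of this statement at all: it is imported verbatim from Schweitzer (\cite{schweitzer1993dense}, Theorem 3.1.7), so there is no internal argument to compare yours against, and I can only assess your proposal on its merits. For $G=\mathbb{Z}$ it is essentially correct and complete. The key estimate $\|f*g\|_{\lambda,k}\le C_\lambda\|f\|_{\lambda,k+d_\lambda}\|g\|_{\lambda,k}$ is valid (the same seminorm index on both sides is exactly what the paper's definition of $m$-temperedness licenses), and it does simultaneously give well-definedness, joint continuity, and the absolute convergence that legitimizes the rearrangement in your associativity check. Your diagnosis that weighted-$\ell^1$ seminorms $\sum_n\|f^{(n)}\|_\lambda\sigma(n)$ cannot be submultiplicative is sound whenever the action is unbounded at $\lambda$ (testing $e_y * re_0$ forces $\|\alpha_y(r)\|_\lambda\le\|1\|_\lambda\sigma(0)\|r\|_\lambda$), though note $m$-temperedness permits bounded, e.g.\ isometric, actions, for which the naive weights do work; this is a side remark and does not affect your construction. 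The multiplier seminorms $\|f\|^{*}_{\lambda,k}$ are finite and continuous by the key estimate, submultiplicative by associativity, and dominate $\|\cdot\|_{\lambda,k}$ because $h=\delta_0\otimes 1_R$ is a unit (here you implicitly use $\alpha_y(1_R)=1_R$, which holds since the $\alpha_y$ are unital automorphisms). So for $\mathbb{Z}$ you genuinely exhibit $\mathscr{S}(\mathbb{Z},R;\alpha)$ as a Fr\'echet--Arens--Michael algebra, which is the only case the surrounding paper ever uses.

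The genuine gap is in the cases $G=\mathbb{R}$ and $G=\mathbb{T}$, which the statement also covers. As the paper remarks immediately after the theorem, $\mathscr{S}(G,R;\alpha)$ is in general \emph{non-unital} for these groups, and your domination step $\|f\|_{\lambda,k}\le\|1_R\|_\lambda\|f\|^{*}_{\lambda,k}$ tests against the unit, which does not exist there. Replacing the unit by an approximate identity would require a net $(h_i)$ with $f*h_i\to f$ that is \emph{bounded in each seminorm} $\|\cdot\|_{\lambda,k}$; for Schwartz-type seminorms involving derivatives, mollifier families blow up in exactly those seminorms, so the multiplier seminorms could a priori generate a strictly weaker topology, and ``the same multiplier construction closes the $\mathbb{T}$ and $\mathbb{R}$ cases'' does not follow. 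Moreover the continuity estimate itself is not ``identical'' for these groups: the seminorms there are sup-norms of derivatives (with polynomial weights for $\mathbb{R}$), so one must justify differentiation under the integral sign and do Leibniz bookkeeping on $x\mapsto\alpha_y(g(xy^{-1}))$, which you gesture at but do not carry out. In short: your argument is a correct, self-contained proof of the $\mathbb{Z}$ case, but as a proof of the full cited theorem it is incomplete, and the non-unital groups require a different route to $m$-convexity, for which Schweitzer's original proof should be consulted.
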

\noindent
\textbf{Remark.} The algebras $\mathscr{S}(G, R; \alpha)$ are, in general, non-unital for $G = \mathbb{R}, \mathbb{T}$. They are always unital for $G = \mathbb{Z}$.

\begin{proposition}
	\label{smooth extens is left and right free}
	Consider the following multiplication on $\mathscr{S}(G, R)$:
	\[
	(f *' g)(x) = \int_G \alpha_{y^{-1}}(f(xy^{-1})) g(y) \text{d}y.
	\]
	Then the following locally convex algebra isomorphism takes place:
	\[
	i : \mathscr{S}(G, R; \alpha) \rightarrow (\mathscr{S}(G, R), *'), \quad i(f)(x) = \alpha_{x^{-1}} (f(x)).
	\]
	In particular, when $G = \mathbb{Z}$, this is an isomorphism of unital algebras.
\end{proposition}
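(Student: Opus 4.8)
The plan is to verify directly that the map $i(f)(x) = \alpha_{x^{-1}}(f(x))$ intertwines the two multiplications $*$ and $*'$, and that it is a topological isomorphism of the underlying locally convex spaces. First I would check that $i$ is a continuous linear bijection between the underlying spaces $\mathscr{S}(G, R)$; here the key observation is that $i$ is built pointwise from the automorphisms $\alpha_{x^{-1}}$, and the $m$-temperedness of the action guarantees that applying $\alpha_{x^{-1}}$ introduces only polynomial growth in $x$, which is absorbed by the rapidly decreasing weights $(|n|+1)^k$ defining the seminorms. The inverse is visibly $i^{-1}(g)(x) = \alpha_x(g(x))$, so bijectivity is immediate; the content is the two-sided continuity estimate on the seminorms $\left\| \cdot \right\|_{\lambda, k}$.

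Next I would carry out the main algebraic verification, namely that $i(f * g) = i(f) *' i(g)$. The plan is to expand both sides using the convolution formulas: on the left, apply $\alpha_{x^{-1}}$ to the integrand $f(y)\alpha_y(g(xy^{-1}))$ of $(f*g)(x)$, and on the right, substitute $i(f)(xy^{-1}) = \alpha_{yx^{-1}}(f(xy^{-1}))$ and $i(g)(y) = \alpha_{y^{-1}}(g(y))$ into the definition of $*'$. Using that $\alpha$ is a group homomorphism into $\text{Aut}(R)$, so that $\alpha_{x^{-1}} \circ \alpha_y = \alpha_{x^{-1}y}$ and the factors recombine correctly, both integrands should collapse to the same expression after a change of variables $y \mapsto xy^{-1}$ (or its analogue) in the integral. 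This is a routine but bookkeeping-heavy computation that I would record carefully rather than grind through here.

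Finally, for the case $G = \mathbb{Z}$, I would note that $\mathscr{S}(\mathbb{Z}, R; \alpha)$ is unital by the preceding remark, and check that $i$ sends the identity element to the identity of $(\mathscr{S}(\mathbb{Z}, R), *')$. Since the unit of $\mathscr{S}(\mathbb{Z}, R; \alpha)$ is the function supported at $0 \in \mathbb{Z}$ taking the value $1_R$, and $\alpha_0 = \text{id}$, the map $i$ fixes this element, so $i$ is an isomorphism of unital algebras. The main obstacle I anticipate is purely computational: keeping track of exactly where each $\alpha_x$, $\alpha_{x^{-1}}$, and $\alpha_{y^{-1}}$ lands under the change of variables in the convolution integrals, and ensuring the group-homomorphism identities for $\alpha$ are applied at the right moments so that the two sides genuinely coincide rather than differing by a stray automorphism.
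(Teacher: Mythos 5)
Your proposal is correct and takes essentially the same route as the paper: the paper's proof likewise verifies directly that $i$ intertwines $*$ and $*'$ (expanding $(i(f) *' i(g))(x)$, using $\alpha_{y^{-1}}\circ\alpha_{yx^{-1}}=\alpha_{x^{-1}}$, and changing variables $y\mapsto yx$ before pulling $\alpha_{x^{-1}}$ out of the integral), while dismissing the topological-isomorphism part as obvious --- a point where you actually supply more detail (the $m$-temperedness estimate) than the paper does. The computation you deferred does go through in a few lines exactly as you outline, using additionally that $G$ is abelian so the automorphisms recombine, so there is no gap.
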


\begin{proof}
	The mapping $i$ is, obviously, a topological isomorphism of locally convex spaces. Now notice that
	\[
	\begin{split}
		(i(f) *' i(g))(x) &= \int_G \alpha_{y^{-1}} (i(f)(xy^{-1})) i(g)(y) \text{d}y = \int_G \alpha_{x^{-1}} (f(xy^{-1})) \alpha_{y^{-1}}(g(y)) \text{d}y = \\
		&= \int_G \alpha_{x^{-1}} (f(y^{-1})) \alpha_{y^{-1} x^{-1}}(g(yx)) \text{d} y = \alpha_{x^{-1}} \left( \int_G f(y^{-1}) \alpha_{y^{-1}}(g(yx)) \text{d}y \right) = \\
		&= i(f * g)(x),
	\end{split}
	\]
	therefore, $i$ is an algebra homomorphism.
\end{proof}

For example, let us consider a Fr\'echet-Arens-Michael algebra $R$ with a $m$-tempered action of $\mathbb{Z}$ and fix a generating family of submultiplicative seminorms $\{ \left\| \cdot \right\|_\lambda \ | \ \lambda \in \Lambda \}$ on $R$, such that for 
\[
\left\| \alpha_1^n(r) \right\|_\lambda \le p(n) \left\| r \right\|_\lambda, \quad (r \in R, n \in \mathbb{Z}),
\]
where $p$ is a polynomial. In this case the algebra $R$ is contained in $\mathscr{S}(\mathbb{Z}, R; \alpha)$:
\[
R \hookrightarrow \mathscr{S}(\mathbb{Z}, R; \alpha), \quad r \mapsto r e_0,
\] 
where $(re_i)_j := \delta_{ij} r$.

Hence, $\mathscr{S}(\mathbb{Z}, R; \alpha)$ becomes a unital $R$-$\hat{\otimes}$-algebra, and in the appendix we prove the Proposition \ref{smooth extensions}, which states that the structure of 
$\widehat{\Omega}^1_R( \mathscr{S}(\mathbb{Z}, R; \alpha) )$ is similar to the algebraic and holomorphic cases. This gives us an opportunity to formulate the following theorem:

\begin{theorem}
	Let $R$ be a Fr\'echet-Arens-Michael algebra with with a $m$-tempered $\mathbb{Z}$-action $\alpha$. If we denote $A = \mathscr{S}(\mathbb{Z}, R; \alpha)$, then we have
	\[
	\text{db}(A) \le \text{db}(R) + 1, \quad \text{dgr}(A) \le \text{dgr}(R) + 1, \quad \text{w.dg}(A) \le \text{w.dg}(R) + 1.
	\]
\end{theorem}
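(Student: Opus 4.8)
The plan is to reproduce, almost verbatim, the arguments of Theorems \ref{upper estimates for bidimension holomorphic} and \ref{upper estimates for global dimension holomorphic}, since all the structural ingredients needed for the smooth crossed product are available. The crucial starting point is the short exact sequence of $A$-$\hat{\otimes}$-bimodules
\[
\begin{tikzcd}
	0 \ar[r] & A_\alpha \hat{\otimes}_R A \ar[r, "j"] & A \hat{\otimes}_R A \ar[r, "m"] & A \ar[r] & 0,
\end{tikzcd}
\]
which I obtain by combining the general splitting of the relative-differentials sequence (Proposition \ref{splits}) with the computation of $\widehat{\Omega}^1_R(A)$ carried out in the appendix (Proposition \ref{smooth extensions}); here $\alpha$ denotes the generating automorphism $\alpha_1$ of the $\mathbb{Z}$-action. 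As in the holomorphic case, this sequence splits both in $R\text{-}\textbf{mod}\text{-}A$ and in $A\text{-}\textbf{mod}\text{-}R$. Because the crossed product is taken over $\mathbb{Z}$ and the action is by \emph{automorphisms}, $\alpha$ is automatically invertible, so $A$ plays here exactly the role that the holomorphic Laurent extension $\mathcal{O}(\mathbb{C}^\times, R; \alpha)$ played in Section \ref{Estimates for the bidimension and projective global dimensions of holomorphic Ore extensions}.

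The second ingredient is that $A = \mathscr{S}(\mathbb{Z}, R; \alpha)$ is free as both a left and a right $R$-$\hat{\otimes}$-module. Left-freeness is transparent, since left multiplication by the copy of $R$ embedded as $r \mapsto r e_0$ is pointwise; right-freeness is supplied by Proposition \ref{smooth extens is left and right free}, whose isomorphism $i$ intertwines the crossed-product multiplication with the multiplication $*'$, for which $A$ is manifestly right-free. With both freeness statements in hand, the functors $A \hat{\otimes}_R (-)$, $A_\alpha \hat{\otimes}_R (-)$ and $(-) \hat{\otimes}_R A$ preserve admissibility (Proposition \ref{free admissible}) and carry projective modules to projective modules (Lemma \ref{lemma1}).

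For the bidimension bound I would fix a projective bimodule resolution $0 \leftarrow R \leftarrow P_0 \leftarrow \dots \leftarrow P_n \leftarrow 0$ of $R$ in $R\text{-}\textbf{mod}\text{-}R$ with $n = \text{db}(R)$, tensor it by $A$ on both sides to produce projective resolutions of $A \hat{\otimes}_R A$ and of $A_\alpha \hat{\otimes}_R A$ as $A$-$\hat{\otimes}$-bimodules (exactly as in the proof of Theorem \ref{upper estimates for bidimension holomorphic}, using the two-sided freeness and Lemma \ref{lemma1}), and then apply Theorem \ref{subadditivity} to the split sequence above to obtain $\text{db}(A) \le \max\{n, n+1\} = \text{db}(R)+1$. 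For the right global and weak global dimensions I would instead take an arbitrary $M \in \textbf{mod}\text{-}A$, apply $M \hat{\otimes}_A (-)$ to the split sequence to get the admissible short exact sequence
\[
\begin{tikzcd}
	0 \ar[r] & M_\alpha \hat{\otimes}_R A \ar[r] & M \hat{\otimes}_R A \ar[r] & M \ar[r] & 0,
\end{tikzcd}
\]
and then combine Theorem \ref{subadditivity} (respectively Proposition \ref{weaksubadd} in the weak case) with Proposition \ref{free} and Lemma \ref{auto}, precisely as in Theorem \ref{upper estimates for global dimension holomorphic}, to conclude $\text{dh}_{A^{\text{op}}}(M) \le \text{dh}_{R^{\text{op}}}(M) + 1$ and the analogous weak inequality; taking suprema over $M$ yields the two remaining estimates.

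I do not expect a genuine obstacle in the theorem itself: the real work has been pushed into the appendix, namely verifying that $\widehat{\Omega}^1_R(A) \cong A_\alpha \hat{\otimes}_R A$ and that the defining sequence splits two-sidedly (Proposition \ref{smooth extensions}), together with the right-freeness coming from Proposition \ref{smooth extens is left and right free}. The one point deserving care is the invertibility of $\alpha$, which is exactly what allows Lemma \ref{auto} to identify $\text{dh}_{R^{\text{op}}}(M_\alpha)$ with $\text{dh}_{R^{\text{op}}}(M)$; this holds automatically since the $\mathbb{Z}$-action is by automorphisms, and it is precisely the reason all three estimates go through without any finiteness hypothesis beyond those implicit in the right-hand sides.
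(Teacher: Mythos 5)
Your proposal is correct and takes essentially the same approach as the paper: the paper itself disposes of this theorem by remarking that the proofs of Theorems \ref{upper estimates for bidimension holomorphic} and \ref{upper estimates for global dimension holomorphic} carry over verbatim, with Proposition \ref{smooth extensions} supplying the identification $\widehat{\Omega}^1_R(A) \cong A_{\alpha_1} \hat{\otimes}_R A$ (hence the split sequence via Proposition \ref{splits}) and Proposition \ref{smooth extens is left and right free} supplying the two-sided freeness. Your observation that invertibility of $\alpha_1$ is automatic for a $\mathbb{Z}$-action, making Lemma \ref{auto} applicable without extra hypotheses, is exactly the point implicit in the paper's reduction.
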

\noindent The proof of the theorem is very similar to the proofs of Theorems \ref{upper estimates for bidimension holomorphic} and \ref{upper estimates for global dimension holomorphic}.

As a simple corollary from the Proposition \ref{smooth extens is left and right free}, we get that $\mathscr{S}(\mathbb{Z}, R; \alpha)$ is free as a left and right $R$-$\hat{\otimes}$-module, and together with Proposition \ref{conditions for lower estimates} we obtain the lower estimates:

\begin{theorem}
	Let $R$ be Fr\'echet-Arens-Michael algebra with $\text{dgr}(R^\text{op}) < \infty$ and with a $m$-tempered $\mathbb{Z}$-action $\alpha$. Denote $A = \mathscr{S}(\mathbb{Z}, R; \alpha)$. Then the conditions of the Proposition \ref{conditions for lower estimates} are satisfied. In particular, we have
	\[
	\text{dgr}_{\textbf{Fr}}(R) \le \text{dgr}_{\textbf{Fr}}(A), \quad \text{w.dg}(R) \le \text{w.dg}(A).
	\]
\end{theorem}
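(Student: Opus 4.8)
The plan is to verify the two hypotheses of Proposition \ref{conditions for lower estimates} for the pair $(R, A)$ with $A = \mathscr{S}(\mathbb{Z}, R; \alpha)$; once both are checked, the claimed inequalities $\text{dgr}_{\textbf{Fr}}(R) \le \text{dgr}_{\textbf{Fr}}(A)$ and $\text{w.dg}(R) \le \text{w.dg}(A)$ are precisely the conclusion of that proposition, so no new homological input is needed. The whole argument reduces to understanding the one-sided $R$-module structures on $A$ and locating its unit. The first step is to identify the underlying Fr\'echet space: since $s$ is nuclear, the canonical map is a topological isomorphism $\mathscr{S}(\mathbb{Z}, R) \cong R \hat{\otimes} s$, sending a sequence $f = (f^{(n)})_{n \in \mathbb{Z}}$ to $\sum_n f^{(n)} \otimes e_n$, where $(e_n)$ is the standard basis of $s$. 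This $E = s$ is the candidate for condition (1).

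Next I would pin down the left $R$-action. Computing with the convolution product $*$ of Theorem \ref{def of smooth ore extensions} for $G = \mathbb{Z}$, left multiplication by the image $re_0$ of $r \in R$ comes out \emph{untwisted}, namely $(re_0 * f)(n) = r f^{(n)}$. Hence the isomorphism $\varphi : A \to R \hat{\otimes} s$ above is an isomorphism of left $R$-$\hat{\otimes}$-modules, so $A$ is free on the left. Moreover the unit of $A$ is $1_R e_0$, which $\varphi$ carries to $1_R \otimes e_0$; taking $x = e_0 \in s$ then gives condition (1) verbatim.

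For condition (2) I would switch to the product $*'$: by Proposition \ref{smooth extens is left and right free} the map $i$ is a unital algebra isomorphism $A \cong (\mathscr{S}(\mathbb{Z}, R), *')$, and in the $*'$-picture it is now the \emph{right} action that is untwisted, $(f *' re_0)(n) = f^{(n)} r$. Thus the same space identification exhibits $A$ as a free right $R$-$\hat{\otimes}$-module, in particular projective, which is condition (2). Applying Proposition \ref{conditions for lower estimates} then finishes the proof.

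The only genuinely technical step is the first one: justifying the topological factorization $\mathscr{S}(\mathbb{Z}, R) \cong R \hat{\otimes} s$ (which relies on nuclearity of $s$), and then bookkeeping the twists correctly. The key subtlety is that the left action is untwisted in the $*$-convention while the right action becomes untwisted only after passing through $i$ to the $*'$-convention; one must also check that under the chosen left-module isomorphism the unit lands exactly on $1 \otimes e_0$, so that condition (1) is satisfied with an explicit $x$.
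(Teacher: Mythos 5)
Your proposal is correct and takes essentially the same route as the paper: the paper likewise deduces that $A = \mathscr{S}(\mathbb{Z}, R; \alpha)$ is free as a left and right $R$-$\hat{\otimes}$-module from Proposition \ref{smooth extens is left and right free} and then applies Proposition \ref{conditions for lower estimates}. You merely spell out details the paper leaves implicit --- the factorization $\mathscr{S}(\mathbb{Z}, R) \cong R \hat{\otimes} s$ via nuclearity of $s$, the untwisted left action under $*$, the unit landing at $1 \otimes e_0$, and the transfer of the right module structure through $i$ (which is legitimate since $i$ fixes the copy of $R$, as $\alpha_0 = \mathrm{id}$).
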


\appendix

\section{Relative bimodules of differential 1-forms of Ore extensions}
\label{appendix A}
\begin{proposition}
	\label{algebraic Ore extension}
	Let $R$ be a $\mathbb{C}$-algebra. Suppose that
	\begin{enumerate}[label=(\arabic*)]
		\item $A = R[t; \alpha, \delta]$, where $\alpha : R \rightarrow R$ is an endomorphism and $\delta : R \rightarrow R$ is an $\alpha$-derivation.
		\item $A = R[t, t^{-1}; \alpha]$, where $\alpha : R \rightarrow R$ is an automorphism.
	\end{enumerate} 
	Then $\Omega^1_R(A)$ is canonically isomorphic as an $A$-bimodule to $A_\alpha \otimes_R A.$
\end{proposition}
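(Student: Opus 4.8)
The plan is to use the universal property of $\Omega^1_R(A)$ (Proposition \ref{reldifuniv}) together with an explicit description of the canonical derivation. Write $A = R[t;\alpha,\delta]$; the Laurent case $R[t,t^{-1};\alpha]$ is handled identically, as indicated at the end. Since $A$ is free as a left $R$-module on the powers of $t$, and the canonical derivation $d_A \colon A \to \Omega^1_R(A)$ is an $R$-derivation (hence vanishes on $R$), the Leibniz rule shows that $\Omega^1_R(A)$ is generated as an $A$-bimodule by the single element $dt := d_A(t)$. The key computation is to feed the defining relation $tr = \alpha(r)t + \delta(r)$ of Definition \ref{ore1} into $d_A$: because $d_A$ kills $R$, we obtain $d_A(tr) = (dt)\,r$ on one side and $d_A(\alpha(r)t + \delta(r)) = \alpha(r)\,(dt)$ on the other, whence
\[
(dt)\,r = \alpha(r)\,(dt) \qquad (r \in R).
\]
This is exactly the twisting relation built into $A_\alpha \otimes_R A$.

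First I would construct the map $\Phi \colon A_\alpha \otimes_R A \to \Omega^1_R(A)$, $a \otimes b \mapsto a\,(dt)\,b$. To see that it is well defined I would check that $(a,b) \mapsto a\,(dt)\,b$ is $R$-balanced for the twisted right action on $A_\alpha$, that is, $a\,\alpha(r)\,(dt)\,b = a\,(dt)\,(rb)$; this is precisely the displayed relation. By construction $\Phi$ is then a morphism of $A$-bimodules, sending the generator $1 \otimes 1$ to $dt$.

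For the inverse I would invoke Proposition \ref{reldifuniv}: it suffices to produce an $R$-derivation $D \colon A \to A_\alpha \otimes_R A$ with $D(t) = 1 \otimes 1$, since the induced $A$-bimodule morphism $\Psi$ then satisfies $\Psi(dt) = 1 \otimes 1$, and $\Phi,\Psi$ are mutually inverse because each matches the other on the respective bimodule generators $1 \otimes 1$ and $dt$. I would define $D$ on the left $R$-basis by $D(t^n) = \sum_{i=0}^{n-1} t^i \otimes t^{n-1-i}$ and extend it $R$-linearly on the left; note that $D$ automatically vanishes on $R$ (empty sum for $n=0$).

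The main obstacle is verifying that $D$ is genuinely a derivation, i.e. $D(xy) = D(x)y + xD(y)$. Since $A$ is generated by $R$ and $t$, and the Leibniz identity propagates through products once it holds on generators, this reduces to checking $D(t\cdot s t^n) = (1\otimes 1)(st^n) + t\,D(st^n)$ for $s \in R$, $n \ge 0$. Expanding the left-hand side via $ts = \alpha(s)t + \delta(s)$ and simplifying the right-hand side using the balancing relation $m\,\alpha(s)\otimes n = m \otimes sn$ produces on both sides the same normal form $\alpha(s)\sum_{i=0}^{n} t^i \otimes t^{n-i} + \delta(s)\sum_{i=0}^{n-1} t^i \otimes t^{n-1-i}$; this bookkeeping is the only real computation. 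For the Laurent extension the same $\Phi$ works verbatim, $dt$ still generates the bimodule (now with $d_A(t^{-1}) = -t^{-1}(dt)\,t^{-1}$, obtained from $d_A(tt^{-1}) = 0$), and $D$ is extended to negative powers by the analogous formula, so the identical argument yields the canonical isomorphism $\Omega^1_R(A) \cong A_\alpha \otimes_R A$.
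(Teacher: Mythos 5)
Your proposal is correct and takes essentially the same route as the paper's own proof: the same $R$-balanced map $(a,b)\mapsto a\,(\mathrm{d}t)\,b$ inducing the bimodule morphism $A_\alpha\otimes_R A\to\Omega^1_R(A)$, the same explicit derivation $D(t^n)=\sum_{i=0}^{n-1}t^i\otimes t^{n-1-i}$ (with the sign-flipped formula on negative powers), and the same universal-property/generator argument showing the two maps are mutually inverse. The only point where the paper does visibly more work than your ``identical argument'' remark suggests is the Laurent case, where checking the Leibniz rule for mixed-sign powers ($m>0>n$) requires an explicit cancellation computation with three subcases --- but that is routine bookkeeping within your framework, not a gap.
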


\begin{proof} 
	The first part of the proof works for the both cases. Define the map $\varphi : A_\alpha \times A \rightarrow \Omega^1_R(A)$ as follows:
	\[
	\varphi(f, g) = f \text{d} (t g) - f t \text{d} g = f (\text{d} t) g,\quad (f, g \in A).
	\]
	This map is balanced, because
	\[
	\varphi(f, g) + \varphi(f, g') = \varphi(f, g + g'), \quad \varphi(f, g) + \varphi(f', g) = \varphi(f + f', g),\quad (f,f',g,g' \in A)
	\]
	and
	\[
	\varphi(f, r g) = f (\text{d} t) rg = f (\text{d}(tr)) g = f \text{d}(\alpha(r) t + \delta(r)) g = f \alpha(r) (\text{d} t) g = \varphi(f \circ r, g).
	\]
	Also we have
	\[
	h \varphi(f, g) = \varphi(hf, g), \quad \varphi(f, g) h = \varphi(f, gh).
	\]
	
	Therefore, $\varphi$ induces a well-defined homomorphism of $A$-bimodules $\varphi : A_\alpha \otimes_R A \rightarrow \Omega^1_R(A)$. 
	
	We will use the universal property of $\Omega^1_R(A)$ to construct the inverse morphism.
	\newline
	\begin{enumerate}[label=(\arabic*)]
		\item Suppose that $A = R[t; \alpha, \delta]$. Consider the following linear mapping:
		\[
		D : A \rightarrow A_\alpha \otimes_R A, \quad D(rt^n) = \sum_{k = 0}^{n-1} rt^k \otimes t^{n-k-1}.
		\]
		
		Now we want to show that $D$ is an $R$-derivation. First of all, notice that for any \newline ${f = \sum_{k=0}^m r_k t^k \in R[t; \alpha, \delta]}$ and $n \ge 0$ we have
		\begin{equation}
		\label{deriv1}
		D(f t^n) = \sum_{k = 0}^m r_k D(t^{k+n}) = \sum_{k = 0}^m r_k (D(t^k) t^n + t^k D(t^n)) = D(f) t^n + f D(t^n).
		\end{equation}
		It suffices to show that $D(t^n r) = D(t^n) r$, let us prove it by induction w.r.t. $n$:
		\begin{equation}
		\label{deriv2}
		\begin{aligned}
		D(t^{n+1}r) &= D(t^n \delta(r) + t^n \alpha(r) t) = D(t^n) \delta(r) + D(t^n) \alpha(r) t + t^n \alpha(r) \otimes 1 = \\ & = D(t^n) t r + t^n \alpha(r) \otimes 1 = D(t^n) t r + t^n \otimes r = (D(t^n) t + t^n D(t)) r = D(t^{n+1}) r.
		\end{aligned}
		\end{equation}

		\item Suppose that $A = R[t, t^{-1}; \alpha]$. Consider the following linear mapping:
		\[
		D : A \rightarrow A_\alpha \otimes_R A, \quad D(rt^n) = \begin{cases}
		\sum_{k = 0}^{n-1} rt^k \otimes t^{n-k-1}, & \text{ if } n \ge 0, \\
		-\sum_{k = 1}^{|n|} r t^{-k} \otimes t^{n+k-1}, & \text{ if } n < 0.
		\end{cases}
		\]
		As in the first case, this map turns out to be an $R$-derivation. Notice that it suffices to prove the following statements:
		\begin{itemize}
			\item $D(t^n r) = D(t^n) r \text{ for } n \in \mathbb{Z}, r \in R$.
			\item $D(t^m t^n) = D(t^m) t^n + t^m D(t^n) \text{ for } n, m \in \mathbb{Z}$.
		\end{itemize}
		If $n < 0$, then, to prove the first identity, we can use the argument, similar to \eqref{deriv2}, so let us concentrate on the proof of the second statement.
		If $m, n$ have the same sign, then we can repeat the argument in \eqref{deriv1}. Now suppose that $m > 0$ and $n < 0$. In this case we have
		\[
		D(t^m) t^n + t^m D(t^n) = \sum_{k = 0}^{m-1} t^k \otimes t^{n+m-k-1} - \sum_{k = 1}^{|n|} t^{m-k} \otimes t^{n+k-1}.
		\]
		If $m = |n|$, then everything cancels out and we get $0$. If $m < |n|$, then after the cancellations we get
		\[
		-\sum_{k = m+1}^{|n|} t^{m-k} \otimes t^{n+k-1} = - \sum_{j = 1}^{|m-n|} t^{-j} \otimes t^{n+m+j-1} = D(t^{|m-n|}).
		\]
		If $m > |n|$, then we get
		\[
		\sum_{k = 0}^{m+n-1} t^k \otimes t^{n+m-k-1} = D(t^{m+n}),
		\]
		and this finishes the argument.
	\end{enumerate}
	
	The rest of this proof works in the both cases. Notice that $\varphi \circ D = d_A$, because
	\[
	\varphi \circ D(rt^n) = r \varphi \left( \sum_{k = 0}^{n-1} t^k \otimes t^{n-k-1} \right) = r \sum_{k = 0}^{n-1} (t^{k} \text{d}(t^{n-k}) - t^{k+1} \text{d} (t^{n-k-1})) = r (\text{d}t^n) = d_A(r t^n)
	\] 
	for every $r \in A$, and for $n < 0$ we have
	\[
	\begin{aligned}
		\varphi \circ D(rt^n) &= -r \varphi \left( \sum_{k = 1}^{|n|} t^{-k} \otimes t^{n+k-1} \right) = -r \sum_{k = 1}^{|n|} (t^{-k} \text{d} (t^{n+k}) - t^{-k+1} \text{d}(t^{n+k-1})) = \\ &= r \text{d}(t^n) = d_A(r t^n).
	\end{aligned}
	\]
	Denote the extension of $D$ by $\widetilde{D}$, so $D = \widetilde{D} \circ d_A$. Therefore, we can derive from the universal property of $\Omega^1_R(A)$ that $\varphi \circ \widetilde{D} = \text{Id}_{\Omega^1_R(A)}$. And 
	\[
	\widetilde{D} \circ \varphi(a \otimes b) = a (\widetilde{D} \circ \varphi(1 \otimes 1)) b =  \widetilde{D}(dt) = a \otimes b \Rightarrow \widetilde{D} \circ \varphi = \text{Id}_{A_\alpha \otimes A}. 
	\]

	
\end{proof}

The following proposition was already proven by A. Yu. Pirkovskii, see \cite[Proposition 7.8]{PirkAM}, but we present another proof, which is similar to the proof of the Proposition \ref{algebraic Ore extension}; it even works in the case of \textit{localizable} morphisms. Moreover, the proof can be carried over to the case of smooth crossed products by $\mathbb{Z}$, as we will see later.

\begin{proposition}
	\label{holomorphic Ore extension}
	Let $R$ be an Arens-Michael algebra. Suppose that $A$ is one of the following $\hat{\otimes}$-algebras:
	\begin{enumerate}[label=(\arabic*)]
		\item $A = \mathcal{O}(\mathbb{C}, R; \alpha, \delta)$, where $\alpha: R \rightarrow R$ is an endomorphism and $\delta: R \rightarrow R$ is a $\alpha$-derivation, such that the pair $(\alpha, \delta)$ is a localizable pair of morphisms.
		\item $A = \mathcal{O}(\mathbb{C}^\times, R; \alpha)$, where $\alpha: R \rightarrow R$ is an automorphism, such that the pair $(\alpha, \alpha^{-1})$ is a localizable pair of morphisms.
	\end{enumerate} 
	Then $\widehat{\Omega}^1_R(A)$ is canonically isomorphic to $A_\alpha \hat{\otimes}_R A$.
\end{proposition}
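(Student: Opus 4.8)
The plan is to mimic the strategy of the purely algebraic Proposition \ref{algebraic Ore extension}, exploiting the universal property of $\widehat{\Omega}^1_R(A)$ to produce mutually inverse $A$-$\hat{\otimes}$-bimodule morphisms between $\widehat{\Omega}^1_R(A)$ and $A_\alpha \hat{\otimes}_R A$. First I would construct a map $\varphi : A_\alpha \hat{\otimes}_R A \to \widehat{\Omega}^1_R(A)$ by setting $\varphi(f \otimes g) = f\,\mathrm{d}(tg) - ft\,\mathrm{d}g = f(\mathrm{d}t)g$ on the dense subalgebra $A[t;\alpha,\delta]$ (or $A[t,t^{-1};\alpha]$), exactly as in the algebraic case, and check that it is $R$-balanced and a bimodule map; the balancing property rests on the identity $f(\mathrm{d}t)rg = f\,\mathrm{d}(tr)g - ft\,\mathrm{d}(r)g = f\alpha(r)(\mathrm{d}t)g + f\delta(r)\,\mathrm{d}g - \ldots$ which collapses thanks to the defining relation $tr = \alpha(r)t + \delta(r)$, explaining why the twist $A_\alpha$ appears.

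Next I would build the inverse. The key is to define a \emph{continuous} $R$-derivation $D : A \to A_\alpha \hat{\otimes}_R A$ extending the algebraic formula $D(rt^n) = \sum_{k=0}^{n-1} rt^k \otimes t^{n-k-1}$ (with the two-sided variant for $\mathcal{O}(\mathbb{C}^\times, R;\alpha)$). On the dense subspace the Leibniz identity and the relations $D(t^n r) = D(t^n)r$, $D(t^m t^n) = D(t^m)t^n + t^m D(t^n)$ are precisely the computations already carried out in Proposition \ref{algebraic Ore extension}, so I would invoke those verbatim. By the universal property stated on p.~99 of \cite{PirkAM}, once $D$ is continuous it factors uniquely as $D = \widetilde{D} \circ d_A$ for a bimodule morphism $\widetilde{D} : \widehat{\Omega}^1_R(A) \to A_\alpha \hat{\otimes}_R A$. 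The relations $\varphi \circ \widetilde{D} = \mathrm{Id}$ and $\widetilde{D} \circ \varphi = \mathrm{Id}$ then hold on dense subsets (the computation $\varphi \circ D = d_A$ is a telescoping sum identical to the algebraic one), and extend by continuity to all of the respective spaces.

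The main obstacle, and the only genuinely new content beyond Proposition \ref{algebraic Ore extension}, is \emph{continuity} of $D$ (equivalently, convergence of the formula on holomorphic/rapidly-decreasing elements rather than polynomials). Here is where the localizability hypothesis on $(\alpha,\delta)$ — respectively on $(\alpha,\alpha^{-1})$ — must be used: I would fix an $\mathcal{F}$-stable submultiplicative seminorm $\|\cdot\|$ on $R$ and estimate $\|D(rz^n)\|$ in the natural seminorm on $A_\alpha \hat{\otimes}_R A \cong R \hat{\otimes} (\mathcal{O}(\mathbb{C}) \hat{\otimes} \mathcal{O}(\mathbb{C}))$ (or the annulus analogue), showing that the growth of the coefficients $\sum_{k=0}^{n-1} \|r\|\,(\ldots)$ is dominated by the weights defining $\mathcal{O}(\mathbb{C})$, so that $D$ sends a bounded-type/holomorphic element to a convergent series. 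Concretely, the $n$-fold sum contributes a factor that is polynomial in $n$ against the super-exponentially decaying Taylor coefficients of entire functions, so each defining seminorm of the target is bounded by a defining seminorm of the source. Once this estimate is in place, $D$ is continuous and the rest of the argument is the algebraic proof transported through the universal property; I would remark explicitly that this is the step that fails without localizability and that it is exactly what upgrades Pirkovskii's \cite[Proposition 7.8]{PirkAM} to cover localizable (not merely $m$-localizable) morphisms.
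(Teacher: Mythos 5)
Your proposal follows essentially the same route as the paper's own proof: the same balanced map $\varphi(f\otimes g)=f(\mathrm{d}t)g$, the same derivation $D(rz^n)=\sum_{k=0}^{n-1}rz^k\otimes z^{n-k-1}$ (with the Laurent variant for $\mathcal{O}(\mathbb{C}^\times,R;\alpha)$) defined on the dense polynomial subalgebra $R[z;\alpha,\delta]$ and extended by a seminorm estimate in which the polynomial factor in $n$ is absorbed by enlarging the radius, so that each projective tensor seminorm on $A_\alpha\hat{\otimes}_R A$ is dominated by a defining seminorm of $A$, followed by the same appeal to the universal property and density to show $\varphi$ and $\widetilde{D}$ are mutually inverse. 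The only cosmetic differences are that the paper defines $\varphi$ directly on all of $A_\alpha\times A$ (its continuity coming from joint continuity of multiplication on $A$, which is where localizability enters, rather than in the estimate for $D$ itself), and that your balancing computation is stated loosely, but neither affects the substance.
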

\begin{proof}
	Fix a generating family of seminorms $\{ \left\| \cdot \right\|_\lambda : \lambda \in \Lambda \}$ on $R$ such that $\left\| \alpha(x)\right\|_\lambda  \le C \left\| x \right\|_\lambda $ and $\left\| \delta(x)\right\|_\lambda  \le C \left\| x \right\|_\lambda$.
	Define the map $A_\alpha \times A \rightarrow \widehat{\Omega}^1_R(A)$ as in the proof of the Proposition 0.1:
	\[
	\varphi(f, g) = f \text{d} (z g) - f z \text{d} g = f (\text{d} t) g,\quad (f, g \in A).
	\]
	This is a $R$-balanced map (the proof is literally the same as in the Proposition \ref{algebraic Ore extension}), also it is easily seen from the continuity of the multiplication on $A$ that this map is continuous.
	Therefore, this map induces a continuous $A$-$\hat{\otimes}$-bimodule homomorphism $A_\alpha \hat{\otimes}_R A \rightarrow \widehat{\Omega}^1_R(A)$.
	
	\begin{enumerate}[label=(\arabic*)]
		\item Suppose that $A = \mathcal{O}(\mathbb{C}, R; \alpha, \delta)$. Consider the following linear map:
		\[
		D : R[z; \alpha, \delta] \rightarrow A_\alpha \hat{\otimes}_R A, \quad D(rz^n) = \sum_{k = 0}^{n-1} rz^k \otimes z^{n-k-1}.
		\]
		For now it is defined on the dense subset of $A$; let us prove that this map is continuous. Fix $\lambda_1, \lambda_2 \in \Lambda$ and $\rho_1, \rho_2 \in \mathbb{R}_{\ge 0}$. Denote the projective tensor product of $\left\| \cdot \right\|_{\lambda_1, \rho_1}$ and $\left\| \cdot \right\|_{\lambda_2, \rho_2}$ by $\gamma$. Then for every $f = \sum_{k = 0}^m f_k z^k \in R[z; \alpha, \delta] \subset A$ we have
		\[
		\begin{aligned}
		\gamma(D(f)) & \le \sum_{k = 1}^m \gamma(D(f_k z^k)) = \sum_{k = 1}^m \gamma \left( \sum_{l = 0}^{k-1} f_k z^l \otimes z^{k-l-1} \right) \le \sum_{k = 1}^m \left\| f_k \right\|_{\lambda_1} \sum_{l = 0}^{k-1}  \rho_1^l \rho_2^{k-l-1} \le \\ & \le \sum_{k = 1}^m \left\| f_k \right\|_{\lambda_1} (2\max \{ \rho_1, \rho_2, 1 \})^k = \left\| f \right\|_{\lambda_1, 2\max \{ \rho_1, \rho_2, 1 \} }.
		\end{aligned}
		\]
		\item Suppose that $A = \mathcal{O}(\mathbb{C}^\times, R; \alpha)$. Consider the following linear map:			
		\[
		D : R[t, t^{-1}; \alpha] \rightarrow A_\alpha \hat{\otimes}_R A, \quad D(rz^n) = \begin{cases}
		\sum_{k = 0}^{n-1} r z^k \otimes z^{n-k-1}, & \text{ if } n \ge 0, \\
		-\sum_{k = 1}^{|n|} r z^{-k} \otimes z^{n+k-1}, & \text{ if } n < 0.
		\end{cases}
		\]
		For now it is defined on the dense subset of $A$; let us prove that this map is continuous. Fix $\lambda_1, \lambda_2 \in \Lambda$ and $\rho_1, \rho_2 \in \mathbb{R}_{\ge 0}$. Denote the projective tensor product of $\left\| \cdot \right\|_{\lambda_1, \rho_1}$ and $\left\| \cdot \right\|_{\lambda_2, \rho_2}$ by $\gamma$. Suppose that $n \ge 0$. Then we have
		\[
		\gamma( D(r z^n) ) = \gamma \left( \sum_{l = 0}^{n-1} r z^l \otimes z^{n-l-1} \right) \le
		\left\| r \right\|_{\lambda_1}  \sum_{l = 0}^{n-1} \rho_1^l \rho_2^{n-l-1} \le \left\| r z^n \right\|_{\lambda_1, 2 \max \{ \rho_1, \rho_2, 1 \}}.
		\]
		If $n < 0$, then
		\[
		\gamma( D(r z^n) ) = \gamma \left( \sum_{l=1}^{|n|} r z^{-l} \otimes z^{n+l-1} \right) \le 
		\left\| r \right\|_{\lambda_1} \sum_{l = 1}^{|n|} \rho_1^{-l} \rho_2^{n+l-1} \le \left\| r z^n \right\|_{\lambda_1, 2 \min \{ \rho_1, \rho_2, 1 \}}.
		\]
		Therefore, for every $f = \sum_{-m}^{m} f_k z^k \in R[z, z^{-1}; \alpha] \subset A$ we have
		\[
		\gamma (D(f)) =  \sum_{k=-m}^{m}  \gamma \left( D(f_k z^k) \right) \le \sum_{k=-m}^{m} \left\| f_k z^k \right\|_{\lambda_1, 2(\rho_1 + \rho_2 + 1)} = \left\| f \right\|_{\lambda_1, 2(\rho_1 + \rho_2 + 1)} . 
		\]
	\end{enumerate}

	Therefore, this map can be uniquely extended to the whole algebra $A$; we will denote the extension by $D$, as well. Notice $D$ is also an $R$-derivation: the equality $D(ab) - D(a)b - aD(b) = 0$ holds for ${R[z; \alpha, \delta] \times R[z; \alpha, \delta] \subset A \times A}$, which is a dense subset of $A \times A$. Therefore, $D(ab) = D(a)b + aD(b)$ for every $a, b \in A$.
	
	Notice that $\varphi \circ D = d_A$. Denote the extension of $D : A \rightarrow A_\alpha \hat{\otimes}_R A $ by $\widetilde{D} : \widehat{\Omega}^1_R(A) \rightarrow A_\alpha \hat{\otimes}_R A$, so $D = \widetilde{D} \circ d_A$. Therefore we can derive from the universal property of $\widehat{\Omega}^1_R(A)$ that $\varphi \circ \widetilde{D} = \text{Id}$. And $$\widetilde{D} \circ \varphi(a \otimes b) = a (\widetilde{D} \circ \varphi(1 \otimes 1)) b =  \widetilde{D}(dt) = a \otimes b.$$ Therefore, the equality $\widetilde{D} \circ \varphi = \text{Id}$ holds on a dense subset of $A_\alpha \hat{\otimes}_R A$, but $\widetilde{D} \circ \varphi$ is continuous, therefore, $\widetilde{D} \circ \varphi = \text{Id}$ holds everywhere on $A_\alpha \hat{\otimes}_R A$.

\end{proof}

\begin{proposition}
	\label{smooth extensions}
	Let $R$ be a Fr\'echet-Arens-Michael algebra and consider an $m$-tempered action $\alpha$ of $\mathbb{Z}$ on $R$. If we denote the algebra $\mathscr{S}(\mathbb{Z}, R; \alpha)$ by $A$, then $\widehat{\Omega}^1_R(A)$ is canonically isomorphic to $A_{\alpha_1} \hat{\otimes}_R A$.
\end{proposition}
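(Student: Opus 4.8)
The plan is to mimic the proof of Proposition \ref{holomorphic Ore extension} as closely as possible, since the structural role of $\mathscr{S}(\mathbb{Z}, R; \alpha)$ parallels that of the holomorphic Ore extensions: it is free as a left and right $R$-$\hat{\otimes}$-module (via Proposition \ref{smooth extens is left and right free}), and its elements are ``infinite power series'' $\sum_{n \in \mathbb{Z}} r_n z^n$ (writing $z = e_1$ for the generator) with rapidly decreasing coefficients. First I would define the continuous $R$-balanced map $\varphi(f, g) = f\,\text{d}(zg) - fz\,\text{d}g = f(\text{d}z)g$ on $A_{\alpha_1} \times A$; the balancing and bimodule-linearity computations are verbatim those of Proposition \ref{holomorphic Ore extension}, using the relation $zr = \alpha_1(r)z$ that governs the $\mathbb{Z}$-action. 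This induces a continuous bimodule homomorphism $\varphi : A_{\alpha_1} \hat{\otimes}_R A \to \widehat{\Omega}^1_R(A)$.

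Next I would construct the candidate inverse by defining the $R$-derivation $D$ on the dense subalgebra $R[z, z^{-1}; \alpha_1]$ of finitely-supported sequences by the same formula as in the Laurent case of Proposition \ref{holomorphic Ore extension}, namely $D(rz^n) = \sum_{k=0}^{n-1} rz^k \otimes z^{n-k-1}$ for $n \ge 0$ and $D(rz^n) = -\sum_{k=1}^{|n|} rz^{-k} \otimes z^{n+k-1}$ for $n < 0$. The crucial difference from the holomorphic case is the estimate needed to extend $D$ continuously: instead of the projective-tensor seminorms $\|\cdot\|_{\lambda, \rho}$ indexed by radii, I must control the Schweitzer seminorms $\|f\|_{\lambda, k} = \sum_{n} \|f^{(n)}\|_\lambda (|n|+1)^k$. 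The main obstacle is precisely this continuity estimate. For a single term $rz^n$ with $n \ge 0$, $D(rz^n)$ is a sum of $n$ simple tensors, so the relevant output seminorm on $A_{\alpha_1} \hat{\otimes}_R A$ picks up a polynomial factor in $n$ from the $(|k|+1)^{k_1}(|n-k-1|+1)^{k_2}$ weights, which is absorbed by passing to a higher weight $k' > k_1 + k_2 + 1$ on the input; the rapid decay of $(r_n)$ guarantees convergence. I expect the bookkeeping of these weights to be the technically delicate part, but it is genuinely routine given the rapid-decrease structure, exactly as the geometric-series estimates were routine in the holomorphic proof.

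Finally I would verify, on the dense subalgebra, that $D$ is an $R$-derivation (so it extends to $\widetilde{D} : \widehat{\Omega}^1_R(A) \to A_{\alpha_1} \hat{\otimes}_R A$ via the universal property of Definition \ref{topreldifdef}) and that $\varphi \circ D = d_A$ and $\widetilde{D} \circ \varphi = \text{Id}$ both hold on dense subsets. By continuity of all four maps involved, these identities propagate to all of $A$ and all of $A_{\alpha_1} \hat{\otimes}_R A$ respectively, yielding mutually inverse topological isomorphisms. The only place where I must be careful is confirming that the $m$-tempered hypothesis supplies exactly the polynomial control on $\|\alpha_1^n(r)\|_\lambda$ needed both to make $A$ an $R$-$\hat{\otimes}$-algebra (so that the products $rz^k$ land in $A$ with controlled seminorms) and to keep the derivation estimates polynomially bounded; this is the analogue of the localizability hypothesis in the holomorphic setting and is what makes the whole argument go through.
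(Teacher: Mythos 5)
Your proposal is correct and follows essentially the same route as the paper's own proof: the same balanced map $\varphi(f,g) = f(\mathrm{d}e_1)g$, the same derivation $D$ given by the Laurent-case formula on the dense subspace of finitely supported sequences, and the same extension-by-continuity plus universal-property argument to show $\varphi$ and $\widetilde{D}$ are mutually inverse. The only genuinely new content beyond Proposition \ref{holomorphic Ore extension} is exactly the Schweitzer-seminorm estimate you single out, which the paper carries out by absorbing the polynomial factor in $n$ into a higher weight (namely $2\max\{k_1,k_2\}+1$ for $n\ge 0$ and $4\max\{k_1,k_2\}+1$ for $n<0$), just as you propose.
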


\begin{proof}
	Fix a generating system of submultiplicative seminorms $\{ \left\| \cdot  \right\|_\lambda : \lambda \in \Lambda \}$ on $R$, such that
	\[
	\left\| \alpha_n(x) \right\|_\lambda = \left\| \alpha^n_1(x) \right\|_\lambda \le p(n) \left\| x \right\|_\lambda \quad (x \in R, \lambda \in \Lambda).
	\]
	
	Define the map $\varphi : A_{\alpha_1} \times A \rightarrow \widehat{\Omega}^1_R(A)$ as follows:
	\[
	\varphi(f, g) = f \text{d}(e_1 * g) - (f * e_1) \text{d}g = f (\text{d} e_1) g.
	\]
	It is a continuous $R$-balanced linear map (the proof is literally the same as in the previous propositions).

	Now consider the following linear map:
	\[
	D : A \rightarrow A_{\alpha_1} \hat{\otimes}_R A, \quad D(re_n) = \begin{cases}
	\sum\limits_{k = 0}^{n-1} re_k \otimes e_{n-k-1}, & \text{ if } n \ge 0 \\
	-\sum\limits_{k = 1}^{|n|} r e_{-k} \otimes e_{n+k-1}, & \text{ if } n < 0 
	\end{cases}.
	\]
	
	Let us prove that it is a well-defined and continuous map from $A$ to $A_{\alpha_1} \hat{\otimes}_R A$. Fix $\lambda_1, \lambda_2 \in \Lambda$ and $k_1, k_2 \in \mathbb{Z}_{\ge 0}$. Denote the projective tensor product of $\left\| \cdot \right\|_{\lambda_1, k_1}$ and $\left\| \cdot \right\|_{\lambda_2, k_2}$ by $\gamma$. Let $n \ge 1$, then we have
	
	\[
	\begin{aligned}
	\gamma(D(re_n)) &= \gamma \left( \sum\limits_{k = 0}^{n-1} re_k \otimes e_{n-k-1} \right) \le \left\| r \right\|_{\lambda_1}  (n^{k_2} + 2^{k_1} (n-1)^{k_2} + \dots n^{k_1}) \le \\ & \le \left\| r \right\|_{\lambda_1} (n^{\max \{ k_1, k_2 \}} + 2^{\max \{ k_1, k_2 \}} (n-1)^{\max \{ k_1, k_2 \}} + \dots + n^{\max \{ k_1, k_2 \}} ) \le \\ & \le \left\| r \right\|_{\lambda_1} n^{ 2{\max \{ k_1, k_2 \}} +1 } = \left\| r e_n \right\|_{\lambda_1,  2{\max \{ k_1, k_2 \}} +1 }.
	\end{aligned}
	\]
	For $n < 0$ the argument is pretty much the same:
	\[
	\begin{aligned}
	\gamma(D(re_n)) &= \gamma \left( \sum\limits_{k = 1}^{|n|} re_{-k} \otimes e_{n+k-1} \right) \le \left\| r \right\|_{\lambda_1}  (2^{k_1} (|n|+1)^{k_2} + 3^{k_1} |n|^{k_2} + \dots (|n|+1)^{k_1} 2^{k_2} ) \le 
	\\ & \le 
	\left\| r \right\|_{\lambda_1} (2^{\max \{ k_1, k_2 \}} (|n|+1)^{\max \{ k_1, k_2 \}} + \dots + (|n|+1)^{\max \{ k_1, k_2 \}} 2^{\max \{ k_1, k_2 \}} ) \le 
	\\ & \le 
	\left\| r \right\|_{\lambda_1} |n| (|n|+1)^{ 2 \max \{ k_1, k_2 \} } \le 
	2\left\| r \right\|_{\lambda_1} |n|^{4 \max \{ k_1, k_2 \} + 1} = 2 \left\| r e_n \right\|_{\lambda_1,  4{\max \{ k_1, k_2 \}} +1 }.
	\end{aligned}		
	\]
	It is easily seen that for every $f \in R \hat{\otimes} c_{00}$ we have
	\[
	\gamma(Df) \le \sum_{m \in \mathbb{Z}} \gamma( D (f^{(m)} e_m)) \le 
	2 \sum_{m \in \mathbb{Z}} \left\| f^{(m)} e_m \right\|_{\lambda_1, 4{\max \{ k_1, k_2 \}} +1} = 
	2 \left\| f \right\|_{\lambda_1, 4{\max \{ k_1, k_2 \}} +1}. 
	\]
	Then $D$ is a $R$-derivation which can be uniquely extended to the whole algebra $A$, the extension $\widetilde{D}$ is the inverse of $\varphi$, the proof is the same as in the Proposition \ref{holomorphic Ore extension}.
\end{proof}

\printbibliography
\Addresses

\end{document}